\newtheorem{Thm}{Theorem}[section]
\newtheorem{Lem}[Thm]{Lemma}
\newtheorem{Prop}[Thm]{Proposition}
\theoremstyle{definition}
\newtheorem{Def}[Thm]{Definition}
\newtheorem{Rem}[Thm]{Remark}
\newtheorem{Exa}[Thm]{Example}
\begin{document}

\title[]{Quantitative estimates for singularity for conjugate equations driven by linear fractional transformations}
\author[]{Kazuki Okamura}
\date{\today}
\address{Department of Mathematics, Faculty of Science, Shizuoka University}
\email{okamura.kazuki@shizuoka.ac.jp}
\subjclass[2020]{39B12, 26A30, 60G30}
\maketitle

\begin{abstract}
We consider the conjugate equation driven by two families of finite maps on the unit interval satisfying a compatibility condition. 
This framework contains de Rham's functional equations. 
We give sufficient conditions for  singularity of the solution with quantitative estimates in the case where the equation is driven by a family of non-affine maps and a family of linear fractional transformations. 
\end{abstract}

\section{Introduction}

Let $N \ge 2$ and let $\mathcal{I}_N \coloneqq \{0,1, \ldots, N-1\}$. 
Let 
$\{f_i\}_{i \in \mathcal{I}_N}$ and $\{g_i\}_{i \in \mathcal{I}_N}$ be two families of $N$ maps on $[0,1]$ satisfying a compatibility condition. 
We consider the following conjugate equation for $\varphi \colon [0,1] \to [0,1]$:   
\begin{equation}\label{eq:gen-dR-def-1}
g_i \circ \varphi = \varphi \circ f_i, \ i \in \mathcal{I}_N. 
\end{equation}
See the beginning of Section 2 below for the definition of the compatibility condition. 
An alternative expression is given by 
\begin{equation}\label{eq:gen-dR-def-2}
\varphi(x) = \begin{cases} g_0 (\varphi(f_0^{-1}(x))) & 0 = f_0 (0) \le x \le f_0 (1)  \\ \vdots & \\ g_{N-1} (\varphi(f_{N-1}^{-1}(x))) & f_{N-1}(0) \le x \le f_{N-1} (1) = 1 \end{cases}.  
\end{equation}
This is known as de Rham's functional equation \cite{deRham1956, deRham1957}. 
This functional equation is generally fractal in nature, and solutions of \eqref{eq:gen-dR-def-1} can be singular functions.  

Assume that $N=2$, $0 < p < 1$, $f_0 (x) = x/2$, $f_1 (x) = (x+1)/2$, $g_0 (x) = px$ and $g_1 (x) = (1-p)x + p$. 
Then, \eqref{eq:gen-dR-def-2} is expressed as 
\[ \varphi(x) = \begin{cases} p \varphi(2x) & 0 \le x \le 1/2  \\  (1-p) \varphi(2x-1) + p & 1/2 \le x \le 1 \end{cases}. \]
There exists a unique continuous strictly increasing solution $\varphi_p$ for this equation, 
and  the function $\varphi_p$ is singular if $p \ne 1/2$, and $\varphi_p (x) = x$ if $p=1/2$. 
See \cite[Subsection 3.4]{YHK1997} for the proof. 
The function $\varphi_p$ is often called the Lebesgue singular function. 
This function is sometimes referred to by the name of Riesz and N\'agy (\cite{Allaart2018, Baek2011}), Salem (\cite{dACFS2017}), or Tak\'acs (\cite{Baek2011}). 
This function is also a solution of the functional equation by Matkowski and Weso{\l}owski \cite{Morawiec2018, Morawiec2019}. 

Minkowski's question-mark function was initially considered in \cite{Minkowski1905}. 
This function can be obtained by a solution of \eqref{eq:gen-dR-def-1} for the case where $f_0 (x) = x/(x+1)$, $f_1 (x) = 1/(2-x)$, $g_0 (x) = x/2$, and $g_1 (x) = (x+1)/2$. 
It can also be expressed in the form 
\[ \varphi(x) = \begin{cases} \frac{1}{2} \varphi(\frac{x}{1-x}) & 0 \le x \le 1/2  \\  \frac{1}{2} \varphi\left(2-\frac{1}{x}\right) + \frac{1}{2} & 1/2 \le x \le 1 \end{cases}. \]
This function has appeared in various areas of mathematics such as number theory and dynamical systems.  
The real analytic properties of this function have been considered (\cite{Jordan2016, Kessebohmer2008, Mantica2017}). 
The conjugate equation of the de Rham type has been investigated by many authors (\cite{Barany2018, Berg2000, Buescu2021, Girgensohn2006, Hata1985K, Hata1985, Kawamura2002, Serpa2015, Serpa2015N, Serpa2017, Zdun2001}).  
We refer the reader to \cite[Subsection 3.4]{Okamura2020} for a more detailed review of de Rham's functional equations.   

The main purpose of this paper is to investigate sufficient conditions for the solution of \eqref{eq:gen-dR-def-1} to be singular. 
Necessary and sufficient conditions of $\{f_i\}_{i \in \mathcal{I}_N}$ and $\{g_i\}_{i \in \mathcal{I}_N}$ for the singularity of the solution have been considered in \cite{Barany2018, BenSlimane2008, Girgensohn1993, Hata1985, Kawamura2011, Morawiec2019}.  
We consider the case where both  $\{f_i\}_{i \in \mathcal{I}_N}$ and $\{g_i\}_{i \in \mathcal{I}_N}$ are {\it non-affine} maps. 
This case is more difficult to analyze than the case where both $\{f_i\}_{i \in \mathcal{I}_N}$ and $\{g_i\}_{i \in \mathcal{I}_N}$ are affine maps. 
In \cite{Okamura2020}, the author dealt with the case where $\{g_i\}_{i \in \mathcal{I}_N}$ are linear fractional transformations and $\{f_i\}_{i \in \mathcal{I}_N}$ are affine maps. 
Our results, Theorems \ref{thm:main-singular-1} and \ref{thm:main-singular-2} below, deal with the case where $\{f_i\}_{i \in \mathcal{I}_N}$ are not necessarily affine maps. 
We do not assume the differentiability of $\{f_i\}_{i \in \mathcal{I}_N}$. 
Since $\varphi$ is an increasing function on $[0,1]$, 
we deal with the probability measure $\mu_{\varphi}$, which is the Borel probability measure on $[0,1]$ whose distribution function is $\varphi$. 
We give sufficient conditions for $\dim_H \mu_{\varphi} < 1$, which implies the singularity for $\varphi$. 
For the proof, we use an approach using martingales as in \cite{Okamura2020}, but here we need more delicate quantitative estimates for $\dim_H \mu_{\varphi}$.  

The structure of this paper is as follows. 
In Section \ref{sec:prelim}, we give some terminology and basic results of the conjugate equation on the unit interval.   
In Section \ref{sec:singu-non-linear}, we give our main results and their proofs.  
Finally, we give three numerical examples in Section \ref{sec:ex}. 

\section{Preliminaries}\label{sec:prelim}

\subsection{Existence and uniqueness}\label{sec:exist-unique}

We first discuss the existence and uniqueness of the solution of the conjugate equation \eqref{eq:gen-dR-def-1}.   

We say that a family of maps $\{h_i\}_{i \in \mathcal{I}_N}$ on $[0,1]$ is a {\it compatible system} or that it satisfies a {\it compatibility condition} if 
they are strictly increasing continuous functions such that $h_0 (0) = 0$, $h_{i-1}(1) = h_i (0)$, $1 \le i \le N-1$, $h_{N-1}(1) = 1$, and 
that a compatible system $\{h_i\}_{i \in \mathcal{I}_N}$ is a  {\it D-system} on $[0,1]$ 
if  $$\mathcal{D}_h \coloneqq \left\{h_{i_1} \circ \cdots \circ h_{i_n}(j) \colon i_1, \ldots,  i_n \in \mathcal{I}_N, j \in \{0,1\} \right\}$$ is dense in $[0,1]$.

\begin{Lem}\label{lem:density-consequence}
Let $\{h_i\}_{i \in \mathcal{I}_N}$ be a D-system on $[0,1]$. 
Then, for every $x \in [0,1]$, there exists an infinite sequence $(i_n)_n$ of $\mathcal{I}_N$ such that 
\begin{equation}\label{eq:real-address} 
x = \lim_{n \to \infty} h_{i_1} \circ \cdots \circ h_{i_n} (0) = \lim_{n \to \infty} h_{i_1} \circ \cdots \circ h_{i_n} (1).  
\end{equation}
If $x \in \mathcal{D}_h \setminus \{0,1\}$, then there exist two such infinite sequences. 
Otherwise, there exists exactly one such infinite sequence. 
\end{Lem}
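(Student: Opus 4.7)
My plan is to work throughout with the cylinder intervals $C_\alpha := H_\alpha([0,1]) = [H_\alpha(0), H_\alpha(1)]$, where $H_\alpha := h_{i_1}\circ\cdots\circ h_{i_n}$ for a finite word $\alpha = (i_1,\ldots,i_n) \in \mathcal{I}_N^n$. From the compatibility condition one checks inductively that $\{C_\alpha : |\alpha| = n\}$ partitions $[0,1]$ with constituents overlapping only at endpoints, and that $\mathcal{D}_h$ is exactly the union over $n \ge 0$ of all level-$n$ endpoints. An infinite sequence $(i_n)$ then yields a decreasing chain of closed intervals whose intersection is some $[a,b]$ with $a := \lim_n H_{i_1\cdots i_n}(0)$ and $b := \lim_n H_{i_1\cdots i_n}(1)$.

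The decisive step, which I expect to be the main obstacle, is to show that $a = b$ for every such sequence; once this is in hand, any $x \in \bigcap_n C_{i_1,\ldots,i_n}$ satisfies the two limit identities in the statement. I argue by contradiction. If $a < b$, density of $\mathcal{D}_h$ gives some $y \in \mathcal{D}_h \cap (a,b)$; writing $y = H_\beta(j)$ with $m := |\beta|$ makes $y$ an endpoint of the level-$m$ cylinder $C_\beta$. The sandwich $H_{i_1\cdots i_m}(0) \leq a < y < b \leq H_{i_1\cdots i_m}(1)$ places $y$ in the interior of $C_{i_1,\ldots,i_m}$, and since distinct level-$m$ cylinders share only endpoints, $y$ being both an endpoint of $C_\beta$ and interior to $C_{i_1,\ldots,i_m}$ forces $C_\beta = C_{i_1,\ldots,i_m}$; but then $y$ is simultaneously an endpoint and an interior point of this single interval, a contradiction. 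This is the only place where the density hypothesis is used.

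Existence is then a greedy construction: set $y_0 := x$ and at each step choose any $i_k \in \mathcal{I}_N$ with $y_{k-1} \in [h_{i_k}(0), h_{i_k}(1)]$, letting $y_k := h_{i_k}^{-1}(y_{k-1})$. This yields $x \in C_{i_1,\ldots,i_n}$ for every $n$, and the length-to-zero fact supplies the claimed limits. For the multiplicity statement, observe that a genuine choice at step $k$ occurs precisely when $y_{k-1}$ is an interior level-$1$ endpoint $h_j(0) = h_{j-1}(1)$ with $1 \leq j \leq N-1$; otherwise $i_k$ is forced. If $x \notin \mathcal{D}_h$, then $y_k \notin \mathcal{D}_h$ for every $k$, since $y_k = H_\gamma(j')$ together with $x = H_{i_1\cdots i_k}(y_k)$ would place $x \in \mathcal{D}_h$; hence the sequence is unique. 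If $x \in \mathcal{D}_h \setminus \{0,1\}$ and $x = H_\beta(j)$ is a representation of minimal length $m$, minimality rules out $y_k \in \{0,1\}$ for $k < m$ and rules out $y_k$ being an interior level-$1$ boundary for $k < m-1$, so $i_1,\ldots,i_{m-1}$ are forced, while $y_{m-1}$ is an interior level-$1$ boundary opening exactly two choices at step $m$; each choice places $y_m \in \{0,1\}$ and freezes the tail, producing exactly two sequences.
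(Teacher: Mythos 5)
Your proof is correct and follows essentially the same route as the paper: use density of $\mathcal{D}_h$ to force the nested cylinder intervals to shrink to a point (the "sandwich" contradiction you give is exactly the detail the paper compresses into "this cannot occur"), then recover existence and the address count by a case analysis on whether $x$ lies in $\mathcal{D}_h$. The one substantive refinement is that your greedy-construction argument actually establishes there are \emph{exactly} two addresses for $x \in \mathcal{D}_h \setminus\{0,1\}$, whereas the paper's Case~1 only exhibits two; conversely you leave the trivial $x\in\{0,1\}$ case implicit, which the paper treats explicitly.
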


\begin{proof}
We first show that for every infinite sequence $(i_n)_n$, 
\begin{equation}\label{eq:density-arbitrarily-close}
\lim_{n \to \infty} |h_{i_1} \circ \cdots \circ h_{i_n} (0) - h_{i_1} \circ \cdots \circ h_{i_n} (1)| = 0.  
\end{equation}
Assume this fails. 
By the definition of $\{h_i\}_{i \in \mathcal{I}_N}$, 
the sequence $\left( h_{i_1} \circ \cdots \circ h_{i_n} (0) \right)_n$ is increasing and $\left( h_{i_1} \circ \cdots \circ h_{i_n} (1) \right)_n$ is decreasing. 
Since $\mathcal{D}_h$ is dense in $[0,1]$, 
there exist $M \in \mathbb{N}$ and  $k_1, \ldots, k_M$ as well as $j \in \{0,1\}$ such that 
\[ h_{i_1} \circ \cdots \circ h_{i_M}(0) \le \lim_{n \to \infty} h_{i_1} \circ \cdots \circ h_{i_n}(0) < h_{k_1} \circ \cdots \circ h_{k_M}(j) \] 
\[ < \lim_{n \to \infty} h_{i_1} \circ \cdots \circ h_{i_n}(1) \le h_{i_1} \circ \cdots \circ h_{i_M}(1). \]
This cannot occur since the sequence $\left( h_{i_1} \circ \cdots \circ h_{i_n}(j) \right)_{i_1, \dots, i_n, j}$ is in lexicographic order, more precisely, $h_{i_1} \circ \cdots \circ h_{i_n}(j) \le h_{i_1^{\prime}} \circ \cdots \circ h_{i_n^{\prime}}(j^{\prime})$ if and only if $\displaystyle j + \sum_{\ell = 1}^{n} i_{\ell} N^{n-\ell} \le j^{\prime} + \sum_{\ell = 1}^{n} i_{\ell}^{\prime} N^{n-\ell}$. 
Hence \eqref{eq:density-arbitrarily-close} holds.

{\it Case 1.}  Assume $x \in \mathcal{D}_h \setminus\{0,1\}$. 
Assume that $ x = h_{i_1} \circ \cdots \circ h_{i_n}(0)$. 
Since $h_0 (0) = 0$ and $x > 0$, we can assume that $i_n \ge 1$. 
Let $i_k \coloneqq 0$ for every $k \ge n+1$. 
Then, the first equality of \eqref{eq:real-address} holds. 
By this and \eqref{eq:density-arbitrarily-close}, 
the second equality of \eqref{eq:real-address} holds. 
Thus \eqref{eq:real-address} holds.
By replacing $i_n$ with $i_n - 1$ and letting $i_k \coloneqq N-1$ for every $k \ge n+1$, 
the equations in \eqref{eq:real-address} hold. 
Assume that $ x = h_{i_1} \circ \cdots \circ h_{i_n}(1)$. 
Since $h_{N-1} (1) = 1$ and $x < 1$, we can assume that $i_n \le N-2$. 
Let $i_k \coloneqq N-1$ for every $k \ge n+1$. 
Then, \eqref{eq:real-address} holds.

It remains valid if we replace $i_n$ with $i_n + 1$ and let $i_k \coloneqq 0$ for every $k \ge n+1$. 
There are no more solutions of the infinite sequences satisfying \eqref{eq:real-address} since the sequence $\left( h_{i_1} \circ \cdots \circ h_{i_n}(j) \right)_{i_1, \dots, i_n, j}$ is in lexicographic order. 

{\it Case 2.}  Assume $x \in \{0,1\}$.  
Assume $x = 0$. 
Since each $h_i$ is strictly increasing, 
there is no other choice than $i_k = 0$ for every $k \ge 1$ for \eqref{eq:real-address}. 
Assume $x=1$. 
Then, there is no other choice than $i_k = N-1$ for every $k \ge 1$ for \eqref{eq:real-address}. 
 
{\it Case 3.}  Assume $x \notin  \mathcal{D}_h$. 
Then, for each $n \ge 1$, there exists a unique finite sequence $(i_{n,1}, \ldots, i_{n,n})$ such that 
\[ h_{i_{n,1}} \circ \cdots \circ h_{i_{n,n}} (0) < x < h_{i_{n,1}} \circ \cdots \circ h_{i_{n,n}} (1). \]
Then, $i_{n-1,k} = i_{n,k}$ for every $k \le n-1$. 
Let $i_k \coloneqq  i_{n,k}$. 
By  \eqref{eq:density-arbitrarily-close}, the equations in \eqref{eq:real-address} hold. 
If \eqref{eq:real-address} holds, then for every $M \ge 1$, 
\[ h_{i_1} \circ \cdots \circ h_{i_M}(0) \le \lim_{n \to \infty} h_{i_1} \circ \cdots \circ h_{i_n}(0) = x = \lim_{n \to \infty} h_{i_1} \circ \cdots \circ h_{i_n}(1) \le h_{i_1} \circ \cdots \circ h_{i_M}(1). \]
Hence, 
\[ h_{i_1} \circ \cdots \circ h_{i_M}(0) <  x < h_{i_1} \circ \cdots \circ h_{i_M}(1). \]
Hence the uniqueness holds. 
\end{proof}

\begin{Rem}
The existence and uniqueness of the bounded solutions of conjugate equations on more general spaces than $[0,1]$ have been discussed in \cite{Buescu2021, Okamura2019, Serpa2015, Serpa2017}. 
\end{Rem}

\begin{Lem}\label{lem:exist-unique-gen-dR}
Assume that $\{f_i\}_{i \in \mathcal{I}_N}$ and $\{g_i\}_{i \in \mathcal{I}_N}$ are D-systems on $[0,1]$. 
Then, there exists a unique continuous strictly increasing solution of \eqref{eq:gen-dR-def-1}. 
\end{Lem}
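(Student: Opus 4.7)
The plan is to build the solution $\varphi$ directly from the address structure guaranteed by Lemma \ref{lem:density-consequence}, then check the five properties separately (well-definedness, monotonicity, continuity, the functional equation, and uniqueness).

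Here is the construction. Let $x \in [0,1]$, and pick by Lemma \ref{lem:density-consequence} (applied to the D-system $\{f_i\}$) a sequence $(i_n)_n$ with $x = \lim_n f_{i_1} \circ \cdots \circ f_{i_n}(0) = \lim_n f_{i_1} \circ \cdots \circ f_{i_n}(1)$. I set
\[
\varphi(x) := \lim_{n \to \infty} g_{i_1} \circ \cdots \circ g_{i_n}(0).
\]
The limit exists because the sequence is increasing and bounded above by $1$, and the same limit is obtained from $g_{i_1} \circ \cdots \circ g_{i_n}(1)$ by applying \eqref{eq:density-arbitrarily-close} of Lemma \ref{lem:density-consequence} to the D-system $\{g_i\}$.

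For well-definedness, I invoke Lemma \ref{lem:density-consequence} again: ambiguity of the address occurs only on $\mathcal{D}_f \setminus \{0,1\}$, and the two addresses have the explicit form $(i_1,\dots,i_{n-1},i_n,0,0,\dots)$ and $(i_1,\dots,i_{n-1},i_n-1,N{-}1,N{-}1,\dots)$ (or the symmetric counterpart). Using $g_0(0)=0$, $g_{N-1}(1)=1$, and the compatibility relation $g_{i-1}(1) = g_i(0)$, both expressions telescope to the common value $g_{i_1} \circ \cdots \circ g_{i_{n-1}}(g_{i_n}(0))$, so $\varphi$ is single-valued. For monotonicity, I compare the addresses of two points $x < y$ lexicographically (using a fixed convention, e.g.\ the ``$0$-extension'' for points in $\mathcal{D}_f$); at the first index $k$ where the sequences differ, the strict monotonicity of the $f_j$'s together with the tiling property $f_j([0,1]) \cap f_{j+1}([0,1]) = \{f_j(1)\}$ forces the $f$-cells at level $k$ to be separated in the correct order, and the same relation for the $g$-cells yields $\varphi(x) \le \varphi(y)$, with strict inequality because distinct $g$-cells at level $k$ have disjoint interiors and each $g_i$ is strictly increasing.

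Continuity is then automatic from monotonicity together with the fact that $\varphi([0,1])$ is dense in $[0,1]$ (indeed, $\varphi(\mathcal{D}_f) = \mathcal{D}_g$, which is dense by the D-system assumption). The functional equation \eqref{eq:gen-dR-def-1} is immediate from the construction: if $(i_n)$ is an address for $x$, then $(i, i_1, i_2,\dots)$ is an address for $f_i(x)$, so $\varphi(f_i(x)) = \lim_n g_i \circ g_{i_1} \circ \cdots \circ g_{i_n}(0) = g_i(\varphi(x))$ by continuity of $g_i$. For uniqueness, any continuous strictly increasing solution $\psi$ must satisfy $\psi(0) = 0$ and $\psi(1) = 1$: indeed $g_0(\psi(0)) = \psi(0)$, and \eqref{eq:density-arbitrarily-close} applied to $\{g_i\}$ gives $g_0^n(1) \to 0$, forcing $\psi(0)$ to lie in $\bigcap_n [0, g_0^n(1)] = \{0\}$, and similarly for $\psi(1)$. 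Iterating \eqref{eq:gen-dR-def-1} then yields $\psi(f_{i_1}\circ\cdots\circ f_{i_n}(j)) = g_{i_1}\circ\cdots\circ g_{i_n}(j)$ for $j \in \{0,1\}$, so $\psi \equiv \varphi$ on the dense set $\mathcal{D}_f$, hence everywhere.

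The main obstacle I foresee is the strict monotonicity step: monotonicity alone is clear from the nesting of cells, but one needs to rule out that $\varphi$ collapses a non-trivial interval. The key input that handles this is that the $g$-cells $g_{i_1}\circ\cdots\circ g_{i_n}([0,1])$ have interiors that are pairwise disjoint and that their diameters tend to $0$ by \eqref{eq:density-arbitrarily-close} applied to $\{g_i\}$, so addresses that differ already at level $k$ land in strictly separated portions of $[0,1]$; this makes the treatment of boundary points of $\mathcal{D}_f$ the only delicate case, and it is precisely the case already covered by the well-definedness argument.
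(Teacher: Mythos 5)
Your proposal is correct and follows essentially the same route as the paper: build $\varphi$ from the symbolic addresses supplied by Lemma \ref{lem:density-consequence}, use \eqref{eq:density-arbitrarily-close} for the $g$-cylinders to make the definition unambiguous, read off strict monotonicity from the nested-cell structure, get continuity from monotonicity plus density of $\mathcal{D}_g = \varphi(\mathcal{D}_f)$, and obtain uniqueness by propagating the boundary values $\psi(0)=0$, $\psi(1)=1$ through the equation to recover \eqref{eq:solution-dyadic} on $\mathcal{D}_f$. You spell out a few steps the paper leaves implicit (the two-address consistency check via telescoping, and the derivation of $\psi(0)=0$, $\psi(1)=1$ from the functional equation), but the underlying argument is the same.
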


Hereafter, we let $\mu_{\varphi}$ be the continuous probability measure on $[0,1]$ whose distribution function is $\varphi$.  
This is the central object considered in this paper. 

\begin{proof}
First we show the existence. 

{\it Step 1.} We define $\varphi(x)$ for $x \in \mathcal{D}_{f}$.  
Let 
\begin{equation}\label{eq:def-solution-dyadic}
\varphi\left( f_{i_1} \circ \cdots \circ f_{i_n}(j) \right) \coloneqq g_{i_1} \circ \cdots \circ g_{i_n}(j), \ i_1, \ldots, i_n \in \mathcal{I}_N, \ j \in \{0,1\}. 
\end{equation} 
This is well-defined since $\{f_i\}_i$ and $\{g_i\}_i$ are compatible systems. 
Hence, $\varphi(\mathcal{D}_{f}) = \mathcal{D}_{g}$. 

{\it Step 2.} We define $\varphi(x)$ for $x \notin \mathcal{D}_{f}$.  
By the assumption and Lemma \ref{lem:density-consequence}, 
there exists a unique infinite sequence $(i_{n})_n$ of $\mathcal{I}_N$ such that 
$x \in (f_{i_1} \circ \cdots \circ f_{i_n}(0), f_{i_1} \circ \cdots \circ f_{i_n}(1))$. 
Since $\left[ g_{i_1} \circ \cdots \circ g_{i_n}(0), g_{i_1} \circ \cdots \circ g_{i_n}(1)\right] $ is decreasing with respect to $n$, 
\[ \bigcap_{n \ge 1} \left[ g_{i_1} \circ \cdots \circ g_{i_n}(0), g_{i_1} \circ \cdots \circ g_{i_n}(1)\right] = \left[ \lim_{n \to \infty} g_{i_1} \circ \cdots \circ g_{i_n}(0), \lim_{n \to \infty} g_{i_1} \circ \cdots \circ g_{i_n}(1) \right].  \]
By the density of $\mathcal{D}_g$ in $[0,1]$, 
we can apply \eqref{eq:density-arbitrarily-close}, and obtain that 
\begin{equation*}\label{eq:equal-dense}
\lim_{n \to \infty} g_{i_1} \circ \cdots \circ g_{i_n}(0) = \lim_{n \to \infty} g_{i_1} \circ \cdots \circ g_{i_n}(1). 
\end{equation*} 
We let $\displaystyle \varphi(x) \coloneqq \lim_{n \to \infty} g_{i_1} \circ \cdots \circ g_{i_n}(0)$. 

{\it Step 3.} We show that $\varphi$ is strictly increasing on $[0,1]$. 
By \eqref{eq:def-solution-dyadic}, 
$\varphi$ is strictly increasing on $\mathcal{D}_{f}$. 
Let $x < y$. 
By the density of $\mathcal{D}_f$ in $[0,1]$, 
we can apply \eqref{eq:density-arbitrarily-close}, and obtain that 
\begin{equation*}\label{eq:equal-dense-f}
\lim_{n \to \infty} f_{i_1} \circ \cdots \circ f_{i_n}(0) = \lim_{n \to \infty} f_{i_1} \circ \cdots \circ f_{i_n}(1). 
\end{equation*} 

Hence it holds that for some $n$ and $i_1, \dots, i_n, j_1, \dots, j_n \in \mathcal{I}_N$, 
\[  f_{i_1} \circ \cdots \circ f_{i_n}(0)\le x \le f_{i_1} \circ \cdots \circ f_{i_n}(1) < f_{j_1} \circ \cdots \circ f_{j_n}(0) \le y \le f_{j_1} \circ \cdots \circ f_{j_n}(1).  \] 
By this and Step 2, 
\[ \varphi(x) \le g_{i_1} \circ \cdots \circ g_{i_n}(1) < g_{j_1} \circ \cdots \circ g_{j_n}(0) \le \varphi(y). \]

{\it Step 4.}  We show that $\varphi$ is continuous on $[0,1]$. 
Assume that there is a discontinuous point $x_0 \in (0,1)$ of $\varphi$. 
Then, 
\[ b_0 \coloneqq \lim_{x \to x_0 + 0} \varphi(x) > \lim_{x \to x_0 - 0} \varphi(x) \eqqcolon a_0. \] 
By Step 3, $\varphi([0,1]) \cap (a_0, b_0) = \emptyset$, in particular, $\mathcal{D}_g \cap (a_0, b_0) = \emptyset$. 
This contradicts the density of $\mathcal{D}_g$ in $[0,1]$. 
The same argument is applied to the case where $x_0 \in \{0,1\}$.

Second we show the uniqueness. 

{\it Step 1.} We show that if $\varphi$ is a solution of \eqref{eq:gen-dR-def-1}, then $\varphi(j) = j, j \in \{0,1\}$. 
By \eqref{eq:gen-dR-def-1} and the assumption that $f_0 (0) = 0$, 
$\varphi(0) = \varphi(f_0 (0)) = g_0 (\varphi(0))$. 
Denote the $n$-fold composition of $g_0$ by $g_0^{n}$. 
Then, 
by induction in $n$, 
we see that 
$\varphi(0) = g_0^{n}(\varphi(0))$ for every $n \ge 1$. 
Since $g_0$ is an increasing function on $[0,1]$, 
$ g_0^{n}$ is also an increasing  function on $[0,1]$ for each $n$. 
Therefore, 
$0 = g_0^{n}(0) \le g_0^{n}(\varphi(0)) \le g_0^{n}(1)$.   
By this and \eqref{eq:density-arbitrarily-close}, 
\[ \varphi(0)  = \lim_{n \to \infty} g_0^{n}(\varphi(0)) = \lim_{n \to \infty} g_0^{n}(1) = 0. \]
In the same manner, we can show that $\varphi(1) = 1$.

{\it Step 2.} Since $\varphi(j) = j, j \in \{0,1\}$, we see that 
\begin{equation}\label{eq:solution-dyadic}
\varphi\left( f_{i_1} \circ \cdots \circ f_{i_n}(j) \right) = g_{i_1} \circ \cdots \circ g_{i_n}(j), \ i_1, \ldots, i_n \in \mathcal{I}_N, \ j \in \{0,1\}, 
\end{equation} 
by induction in $n$. 

Since $\mathcal{D}_f$ is dense in $[0,1]$ and $\varphi$ is assumed to be continuous, we obtain the uniqueness. 
\end{proof}

We give an example of a compatible system $\{h_0, h_1\}$ which is not a $D$\nobreakdash-system. 
This is suggested by an anonymous referee. 
\begin{Exa}\label{exa:fixed-point-not-D-system} 
Let
\[ h_0(x) \coloneqq
\begin{cases}
\frac{1}{2}x & 0 \le x \le \frac{1}{4},\\
2x - \frac{3}{8} & \frac{1}{4} \le x \le \frac{1}{2},\\
\frac{3}{8}x + \frac{7}{16} & \frac{1}{2} \le x \le 1,
\end{cases} \]
and
\[ h_1(x) \coloneqq \frac{3}{16}x + \frac{13}{16}, \qquad 0 \le x \le 1. \]

Then the functions $h_0$ and $h_1$ are strictly increasing and continuous.
Moreover,
\[ 0 = h_0(0) < h_0(1) = \frac{13}{16} = h_1(0) < h_1(1) = 1. \]
Hence, $\{h_0, h_1\}$ is a compatible system.

We also observe that
\[ h_0(x) \ge \frac{3}{8} \quad \text{for every } x \in \left[\frac{3}{8}, 1\right]. \]
Therefore,
\[ \lim_{n \to \infty} h_0^n(1) \ge \frac{3}{8}. \]
Since
$\displaystyle \inf \mathcal{D}_h \setminus \{0\} = \lim_{n \to \infty} h_0^n(1)$, 
it follows that $\mathcal{D}_h \cap (0, \frac{3}{8}) = \emptyset$.
Consequently, $\mathcal{D}_h$ is \emph{not} dense in $[0,1]$.

Finally, we remark that $h_0$ has a fixed point other than zero; indeed,
$\displaystyle h_0\!\left(\frac{3}{8}\right) = \frac{3}{8}$.
\end{Exa}

It is natural to consider the inverse function of the solution $\varphi$. 
We easily see the following assertion. 
\begin{Lem}\label{lem:inverse-exchange}
Assume that $\{f_i\}_{i \in \mathcal{I}_N}$ and $\{g_i\}_{i \in \mathcal{I}_N}$ are D-systems on $[0,1]$. 
Let $\varphi$ be the unique solution of \eqref{eq:gen-dR-def-1}. 
Then, the inverse function $\varphi^{-1}$ of $\varphi$ is well-defined and satisfies the following conjugate equation 
\begin{equation*}
\varphi^{-1} \circ g_i = f_i \circ \varphi^{-1}, \ i \in \mathcal{I}_N, 
\end{equation*}
which is equivalent with \eqref{eq:gen-dR-def-1} where $\{f_i\}_{i \in \mathcal{I}_N}$ and $\{g_i\}_{i \in \mathcal{I}_N}$ are exchanged. 
\end{Lem}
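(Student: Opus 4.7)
The plan is to invoke Lemma \ref{lem:exist-unique-gen-dR} to get that $\varphi$ is a continuous strictly increasing function on $[0,1]$, then verify it is a bijection of $[0,1]$ onto itself, and finally derive the conjugate equation for $\varphi^{-1}$ by formal composition.

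First I would establish the boundary values $\varphi(0)=0$ and $\varphi(1)=1$. Specializing \eqref{eq:solution-dyadic} with $n=1$, $i_1=0$, $j=0$ gives $\varphi(f_0(0))=g_0(0)$, i.e. $\varphi(0)=0$; taking $i_1=N-1$, $j=1$ analogously gives $\varphi(1)=1$. Combined with the fact that $\varphi$ is continuous and strictly increasing, this shows $\varphi:[0,1]\to[0,1]$ is a bijection, so the inverse $\varphi^{-1}:[0,1]\to[0,1]$ exists and is itself continuous and strictly increasing.

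Next, starting from \eqref{eq:gen-dR-def-1}, namely $g_i\circ\varphi=\varphi\circ f_i$, I would compose with $\varphi^{-1}$ on the left and right. Composing on the left yields $\varphi^{-1}\circ g_i\circ\varphi=f_i$, and composing this on the right with $\varphi^{-1}$ yields $\varphi^{-1}\circ g_i=f_i\circ\varphi^{-1}$, which is the claimed equation. The equivalence with \eqref{eq:gen-dR-def-1} under exchange of $\{f_i\}_i$ and $\{g_i\}_i$ is just a matter of relabeling: the equation $\varphi^{-1}\circ g_i=f_i\circ\varphi^{-1}$ has exactly the form of \eqref{eq:gen-dR-def-1} with the roles of the two families swapped and with solution $\varphi^{-1}$ in place of $\varphi$.

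There is essentially no obstacle here; the only point that deserves a moment of care is confirming the boundary behaviour $\varphi(0)=0$, $\varphi(1)=1$ so that $\varphi^{-1}$ is genuinely defined on all of $[0,1]$ rather than on a proper subinterval, and this follows immediately from \eqref{eq:solution-dyadic}. Everything else is formal manipulation of compositions.
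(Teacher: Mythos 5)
Your proof is correct, and it matches what the paper tacitly expects: the paper itself gives no proof (it introduces the lemma with ``We easily see that''), and the argument you supply—boundary values from \eqref{eq:solution-dyadic} giving $\varphi(0)=0$, $\varphi(1)=1$, hence bijectivity, then formal conjugation of \eqref{eq:gen-dR-def-1} by $\varphi^{-1}$—is the natural and essentially unique way to fill that in.
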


By \cite[Proposition 3.1]{Berg2000}, if $\varphi$ is singular, then $\varphi^{-1}$ is also singular.

\subsection{Weak contraction}\label{sec:wkc}

Hereafter, we assume additional conditions for the D-systems $\{f_i\}_{i \in \mathcal{I}_N}$ and $\{g_i\}_{i \in \mathcal{I}_N}$. 
We say that a function $h \colon [0,1] \to [0,1]$ is a {\it contraction} on $[0,1]$ if 
\[ \|h\|_{\textup{Lip}} \coloneqq \sup_{x \ne y} \left| \frac{h(x)-h(y)}{x-y} \right| < 1. \] 
$\|h\|_{\textup{Lip}}$ is called the Lipschitz constant of $h$. 

For a function $h \colon [0,1] \to \mathbb{R}$, let  
\begin{equation}\label{eq:modulus-of-continuity} 
\omega_h (s) \coloneqq \sup\left\{|h(x)-h(y)| \colon |x-y| \le s \right\}. 
\end{equation} 

We say that a function $h \colon [0,1] \to [0,1]$ is a {\it weak contraction} on $[0,1]$ if 
\[ \phi_{h} (t) \coloneqq \lim_{s \to t+0} \omega_h (s)  < t,  \  \ t > 0. \]

\begin{Lem}\label{lem:wkc-1}
Let $h$ be a weak contraction. 
Then, 
$\displaystyle \lim_{n \to \infty} \phi_h^n (t) = 0$ for every $t > 0$ and 
$|h(x)-h(y)| \le \phi_h (|x-y|)$ for $x,y \in [0,1]$. 
\end{Lem}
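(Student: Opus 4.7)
The plan is to rewrite the definition using the auxiliary modulus $\psi(s) := \sup\{|h(x)-h(y)| : |x-y| \le s\}$, which is nondecreasing in $s$. Since the defining right-limit of $\phi_h$ amounts to $\phi_h(t) = \inf_{s>t} \psi(s)$, the function $\phi_h$ itself is nondecreasing. The pointwise Lipschitz-type bound $|h(x)-h(y)| \le \phi_h(|x-y|)$ is then immediate: for every $s > |x-y|$ we have $|h(x)-h(y)| \le \psi(s)$, and passing to the infimum over such $s$ gives the inequality.

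Next, I would establish right-continuity of $\phi_h$ on $(0, \infty)$, which will be the only technical point: given $\varepsilon > 0$, pick $s_0 > t$ with $\psi(s_0) < \phi_h(t) + \varepsilon$; then for every $t' \in (t, s_0)$ one has $\phi_h(t') \le \psi(s_0) < \phi_h(t) + \varepsilon$, so $\phi_h(t+) = \phi_h(t)$. Combining this with $\phi_h(s) < s$ for all $s>0$, the iterates $\phi_h^n(t)$ are well-defined and $(\phi_h^n(t))_n$ is strictly decreasing as long as it remains positive, hence converges to some $L \ge 0$.

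To finish, suppose for contradiction that $L > 0$. Then $\phi_h^n(t) \downarrow L$ from above, so right-continuity of $\phi_h$ at $L$ yields $\phi_h(\phi_h^n(t)) \to \phi_h(L)$ as $n \to \infty$. Letting $n \to \infty$ in the recursion $\phi_h^{n+1}(t) = \phi_h(\phi_h^n(t))$ gives $L = \phi_h(L)$, which contradicts the weak contraction hypothesis $\phi_h(L) < L$. Hence $L = 0$, completing the proof. The main obstacle, such as it is, is the right-continuity step, but this is essentially a formal consequence of the representation of $\phi_h$ as an $\inf_{s > t}$ of a monotone function.
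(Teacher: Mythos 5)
Your proof is correct and follows essentially the same route as the paper: establish that $\phi_h$ is nondecreasing and right-continuous via the monotonicity of the modulus $\omega_h = \psi$, deduce the pointwise bound, and then pass to the limit in $\phi_h^{n+1}(t) = \phi_h(\phi_h^n(t))$ using right-continuity to show the limit is a fixed point, hence zero. You simply spell out the right-continuity argument (and the ``strictly decreasing as long as positive'' caveat) that the paper leaves implicit.
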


\begin{proof}
Since $\omega_h$ is increasing,  
$\phi_h$ is increasing and right-continuous on $[0,1]$. 
Hence, 
\[ |h(x)-h(y)| \le \lim_{s \to |x-y|+0} \phi_h (s) = \phi_h (|x-y|). \]  

By the assumption, for every $t > 0$, $(\phi_h^n (t))_n$ is strictly decreasing.  
Hence, the limit $\displaystyle\phi^{\infty}_h (t) \coloneqq \lim_{n \to \infty} \phi_h^n (t)$ exists. 
By the right-continuity of $\phi_h$, 
we see that 
$\displaystyle \phi_h \left(\phi^{\infty}_h (t)\right) = \phi^{\infty}_h (t)$. 
Hence, $\phi^{\infty}_h (t) = 0$. 
\end{proof}

\begin{Lem}\label{lem:wkc-dense}
Let $\{h_i\}_{i \in \mathcal{I}_N}$ be a compatible system on $[0,1]$ such that each $h_i$ is a weak contraction on $[0,1]$. 
Then, $\{h_i\}_{i \in \mathcal{I}_N}$ is a D-system. 
\end{Lem}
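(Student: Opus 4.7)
The plan is to exhibit an explicit upper bound on the diameters of the cylinder intervals determined by compositions of the $h_i$'s, and show that this bound tends to $0$. Once this is done, density of $\mathcal{D}_h$ will follow from the fact that the intervals of depth $n$ tile $[0,1]$.

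First I would define $\phi(t) := \max_{i \in \mathcal{I}_N} \phi_{h_i}(t)$. Since each $\phi_{h_i}$ is right-continuous and increasing (as noted in the proof of Lemma \ref{lem:wkc-1}), and the maximum is taken over finitely many indices, $\phi$ is also right-continuous and increasing, and it satisfies $\phi(t) < t$ for every $t > 0$. The argument of Lemma \ref{lem:wkc-1} then applies verbatim to $\phi$: the sequence $(\phi^n(1))_n$ is strictly decreasing, its limit $L$ must satisfy $\phi(L) = L$ by right-continuity, forcing $L = 0$.

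Next, for a finite word $(i_1, \dots, i_n)$, set
\begin{equation*}
I_{i_1 \cdots i_n} := \bigl[h_{i_1} \circ \cdots \circ h_{i_n}(0),\ h_{i_1} \circ \cdots \circ h_{i_n}(1)\bigr].
\end{equation*}
Using $|h_i(x)-h_i(y)| \le \phi_{h_i}(|x-y|) \le \phi(|x-y|)$ together with the monotonicity of $\phi$, a straightforward induction on $n$ gives
\begin{equation*}
\bigl| h_{i_1} \circ \cdots \circ h_{i_n}(1) - h_{i_1} \circ \cdots \circ h_{i_n}(0) \bigr| \le \phi^n(1),
\end{equation*}
so every interval of depth $n$ has length at most $\phi^n(1)$.

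Finally, I would verify by induction on $n$ that $\bigcup_{(i_1,\dots,i_n)} I_{i_1 \cdots i_n} = [0,1]$ and that consecutive intervals (in the natural ordering inherited from the strict monotonicity of each $h_i$) share endpoints: the base case $n=1$ is precisely the compatibility condition $h_0(0)=0$, $h_{i-1}(1)=h_i(0)$, $h_{N-1}(1)=1$, and the inductive step applies $h_{i_1}$ to the length-$(n{-}1)$ tiling inside $[h_{i_1}(0),h_{i_1}(1)]$. Given $x \in [0,1]$ and $\varepsilon > 0$, pick $n$ with $\phi^n(1) < \varepsilon$ and choose a word $(i_1,\dots,i_n)$ with $x \in I_{i_1 \cdots i_n}$; the endpoints of this interval lie in $\mathcal{D}_h$ and are within $\varepsilon$ of $x$, giving density.

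I do not expect a substantial obstacle here: once $\phi$ is built as above and shown to satisfy $\phi^n(1) \to 0$, everything reduces to the tiling argument. The only mild subtlety is to avoid assuming Lipschitz behavior for $\phi$; the right-continuity and monotonicity supplied by Lemma \ref{lem:wkc-1} are exactly what makes the iteration argument go through, and these properties are preserved by taking a finite maximum.
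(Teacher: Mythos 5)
Your proposal matches the paper's proof almost exactly: the paper also sets $\phi := \max_{i \in \mathcal{I}_N} \phi_{h_i}$, uses right-continuity and finiteness of $N$ to get $\phi(t)<t$ and $\phi^n(1)\to 0$, bounds the cylinder interval lengths by $\phi^n(1)$, and concludes via the tiling $[0,1]=\bigcup_{i_1,\dots,i_n} [h_{i_1}\circ\cdots\circ h_{i_n}(0), h_{i_1}\circ\cdots\circ h_{i_n}(1)]$. No meaningful difference in approach.
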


\begin{proof}
Let $\displaystyle \phi \coloneqq \max_{i \in \mathcal{I}_N} \phi_{h_i}$. 
Since each $\phi_{h_i}$ is right-continuous, 
$\phi$ is right-continuous\footnote{The assumption that $N$ is finite is important.} on $[0,1]$, $\phi(t) < t$ for every $t > 0$, 
and $\displaystyle \max_{i \in \mathcal{I}_N} |h_i (x) - h_i(y)| \le \phi(|x-y|)$, $x, y \in [0,1]$. 
Then, we see that $\displaystyle \lim_{n \to \infty} \phi^n (t) = 0$ for every $t > 0$. 
Hence, 
\[ \left| h_{i_1} \circ \cdots \circ h_{i_n}(1) - h_{i_1} \circ \cdots \circ h_{i_n}(0) \right| \le \phi^n (1). \]
\[ \lim_{n \to \infty} \max_{i_1, \ldots, i_n \in \mathcal{I}_N} \left|h_{i_1} \circ \cdots \circ h_{i_n}(1) - h_{i_1} \circ \cdots \circ h_{i_n}(0)\right| = 0. \]
By the definition, for each $n \ge 1$, 
\[ [0,1] = \bigcup_{i_1, \ldots, i_n \in \mathcal{I}_N} \left[ h_{i_1} \circ \cdots \circ h_{i_n}(0), h_{i_1} \circ \cdots \circ h_{i_n}(1)\right], \]
and their interiors are disjoint. 
\end{proof}

We say that  a compatible system $\{h_i\}_{i \in \mathcal{I}_N}$ on $[0,1]$ is a {\it (weakly) contractive system} on $[0,1]$ if each $h_i$ is a (weak) contraction on $[0,1]$. 
Every weakly contractive system is a D-system. 

\subsection{Linear fractional transforms}\label{sec:lft}

\begin{Def}\label{def:lf-family}
We say that a family of functions $\{h_i\}_{i \in \mathcal{I}_N}$ is an {\it LF system} on $[0,1]$ if 
$h_i (x) \coloneqq \dfrac{a_i x + b_i}{c_i x + d_i}, x \in [0,1]$, for each $i \in \mathcal{I}_N$, 
where $a_i, b_i, c_i, d_i, i \in \mathcal{I}_N$, are real numbers such that \\
(1) (well-defined) It holds that $c_i +d_i > 0$ and  $d_i > 0$ for $i \in \mathcal{I}_N$.\\ 
(2) (strictly increasing, weak contraction) It holds that 
$$ 0 < a_i d_i - b_i c_i \le \min\{d_i^2, (c_i+d_i)^2\}, \ i \in \mathcal{I}_N.$$ 
(3) (compatibility) It holds that $b_0 = 0$ and 
$\dfrac{a_i + b_i}{c_i + d_i} = \dfrac{b_{i+1}}{d_{i+1}}$ for $i \in \mathcal{I}_N$, where we let $\dfrac{b_N}{d_{N}} \coloneqq 1$. \\ 
(4) The sequence $\left(\dfrac{b_i}{d_i}\right)_i$ is strictly increasing in $i$. 
\end{Def}

Condition (4) follows from the conditions (1), (2) and (3) but is frequently used.  
For condition (2), we may consider a stronger condition: \\ 
(2') $0 < a_i d_i - b_i c_i < \min\{d_i^2, (d_i+c_i)^2\}, \ i \in \mathcal{I}_N$. 

Since $h_i$ is invariant if we multiply each of $a_i, b_i, c_i$ and $d_i$ by a positive constant, 
we may assume that $d_i = 1$ or $a_i d_i - b_i c_i = 1$. 
An LF system is a compatible system. 

\begin{Lem}\label{lem:lf-dense}
Every LF system is a weakly contractive system. 
If (2') holds instead of (2), then it is a contractive system. 
\end{Lem}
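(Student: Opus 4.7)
The plan is to reduce the statement to a pointwise estimate on $h_i'$. Direct differentiation gives
$$h_i'(x) = \frac{a_i d_i - b_i c_i}{(c_i x + d_i)^2},$$
which by condition (1) is positive and well-defined on $[0,1]$. Since the denominator is monotone on $[0,1]$ (increasing for $c_i > 0$, decreasing for $c_i < 0$, constant for $c_i = 0$), its minimum is attained at an endpoint and equals $\min\{d_i^2, (c_i + d_i)^2\}$. Hence $\sup_{[0,1]} h_i' = (a_i d_i - b_i c_i)/\min\{d_i^2, (c_i + d_i)^2\}$.

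Under the stronger condition (2') this supremum is strictly less than $1$, and the mean value theorem gives $\textup{Lip}(h_i) < 1$, so $h_i$ is a contraction, which proves the second assertion.

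Under condition (2) I only obtain $\sup_{[0,1]} h_i' \le 1$, which is insufficient for weak contraction directly. When $c_i \ne 0$ the derivative $h_i'$ is strictly monotone on $[0,1]$, so $h_i' < 1$ except possibly at one endpoint. Writing the modulus of continuity of the increasing function $h_i$ as
$$\omega_{h_i}(s) = \sup_{0 \le x \le 1 - s} \bigl( h_i(x+s) - h_i(x) \bigr),$$
and noting that $F(x) := h_i(x+s) - h_i(x) = \int_x^{x+s} h_i'(u)\,du$ satisfies $F'(x) = h_i'(x+s) - h_i'(x)$ and is therefore monotone in $x$, the supremum is attained at $x = 0$ when $c_i > 0$ and at $x = 1 - s$ when $c_i < 0$. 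The mean value theorem then writes this supremum as $s \cdot h_i'(\xi)$ for some interior $\xi$, and strict monotonicity of $h_i'$ together with $h_i'(0) \le 1$ or $h_i'(1) \le 1$ forces $h_i'(\xi) < 1$. Letting $s \to t+0$ yields $\phi_{h_i}(t) < t$. The remaining affine case $c_i = 0$ is handled separately: the compatibility condition (3) with $N \ge 2$ forces the image $h_i([0,1])$ to have length $a_i/d_i$ strictly less than $1$, so $h_i$ is a Lipschitz contraction.

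The main subtlety is the boundary situation in condition (2) where $h_i'$ reaches $1$ at an endpoint: a crude Lipschitz bound gives only $\omega_{h_i}(s) \le s$, and strict monotonicity of $h_i'$ as a rational function of $x$ is precisely what pushes $\omega_{h_i}(s)$ strictly below $s$ via an application of the mean value theorem on a subinterval not touching the critical endpoint.
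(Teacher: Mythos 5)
Your proof is correct but takes a genuinely different route from the paper's. You differentiate: $h_i'(x) = (a_id_i - b_ic_i)/(c_ix+d_i)^2$ is strictly monotone (for $c_i\neq 0$), its supremum over $[0,1]$ is attained at an endpoint and equals $(a_id_i - b_ic_i)/\min\{d_i^2,(c_i+d_i)^2\}\le 1$, and then the mean value theorem applied to the modulus $\omega_{h_i}(s)=h_i(s)-h_i(0)$ (resp.\ $h_i(1)-h_i(1-s)$) places the relevant $\xi$ strictly inside the interval where $h_i'<1$. The paper instead works with the algebraic identity $|h_i(x)-h_i(y)|=\dfrac{a_i-b_ic_i}{(c_ix+1)(c_iy+1)}|x-y|$ and the denominator bound $(c_ix+1)(c_iy+1)\ge 1+|c_i|\,|x-y|$, yielding the explicit estimates $\phi_{h_i}(t)\le \dfrac{t}{c_it+1}$ (for $c_i>0$) and $\phi_{h_i}(t)\le \dfrac{(1+c_i)t}{-c_it+1+c_i}$ (for $c_i<0$). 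Your argument is conceptually cleaner and exposes the mechanism (a strictly monotone derivative forces the averaged slope strictly below the endpoint slope); the paper's approach has the advantage of producing explicit rational upper bounds on $\phi_{h_i}$, a computation that recurs, essentially verbatim, in the proof of the quantitative Lemma \ref{lem:LF-upper}. One small point you gloss over: in ``letting $s\to t+0$ yields $\phi_{h_i}(t)<t$'' you are implicitly using that $s\mapsto\omega_{h_i}(s)$ is continuous (equivalently right-continuous), so that $\phi_{h_i}(t)=\omega_{h_i}(t)<t$ rather than merely $\phi_{h_i}(t)\le t$; this is true because $h_i$ is uniformly continuous on $[0,1]$ (or, more directly, because you have closed-form expressions $\omega_{h_i}(s)=h_i(s)-h_i(0)$ or $h_i(1)-h_i(1-s)$ which are visibly continuous in $s$), but it deserves a word, since in general a pointwise strict inequality $\omega(s)<s$ does not survive a one-sided limit. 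The affine case $c_i=0$ is handled the same way as in the paper.
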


\begin{proof}
Let $\{h_i\}_{i \in \mathcal{I}_N}$ be an LF-system. 
We assume that $d_i = 1$ for every $i \in \mathcal{I}_N$. 
By Lemma \ref{lem:wkc-dense}, it suffices to show that each $h_i$ is a weak contraction. 
We first remark that 
\begin{equation}\label{eq:difference-LF} 
|h_i (x) - h_i (y)| = \frac{a_i - b_i c_i}{(c_i x + 1)(c_i y + 1)} |x-y|. 
\end{equation} 
We consider three cases: (i) $c_i > 0$, (ii) $c_i < 0$,  and (iii) $c_i = 0$. 

(i) Assume $c_i > 0$. 

We remark that  
$a_i - b_i c_i \le 1$ and 
\[ |h_i (x) - h_i (y)|  \le \frac{a_i - b_i c_i}{c_i |x-y| + 1} |x-y| \le \frac{|x-y|}{c_i |x-y| + 1}.\]  
Hence, by \eqref{eq:modulus-of-continuity}, 
\[ \omega_{h_i}(s) \le \frac{s}{c_i s + 1}, \ \ s \ge 0. \]
By this and $c_i > 0$, 
\[ \phi_{h_i}(t) \le \frac{t}{c_i t + 1} < t, \ t > 0. \]

(ii) Assume $c_i < 0$. 
Then, $a_i - b_i c_i \le (c_i + 1)^2$ and 
\[ |h_i (x) - h_i (y)|  \le \frac{a_i - b_i c_i}{(c_i+1)(c_i (1-|x-y|) + 1)} |x-y| \le \frac{(1+c_i) |x-y|}{-c_i |x-y| + 1+c_i}.\]  
Hence, by \eqref{eq:modulus-of-continuity}, 
\[ \omega_{h_i}(s) \le \frac{(1+c_i) s}{-c_i s + 1+c_i}, \ \ s \ge 0. \]
By this and $c_i < 0$, 
\[ \phi_{h_i}(t) \le \frac{(1+c_i) t}{-c_i t + 1+c_i} < t, \ t > 0. \]

(iii) Assume $c_i = 0$. 

We remark that 
$0 < a_i \le 1$ and $|h_i (x) - h_i (y)| = a_i |x-y|$. 
By the assumption for $a_i, b_i, c_i$, 
it holds that $h_j (1) > h_j (0)$ for every $j$. 
By the fact that $N \ge 2$, 
we obtain that $a_i = h_i (1) - h_i (0) < 1$. 
\end{proof}

\begin{Rem}
The technique used above is related to a characterization of Rakotch contractions. 
See \cite[Lemma 2.10 (iii)]{Lesniak2020}. 
\end{Rem}

Let $\displaystyle (\mathcal{I}_N)^* \coloneqq \bigcup_{n \in \mathbb{N}} (\mathcal{I}_N)^n$. 
The length of $\sigma$ is the unique number $n \in \mathbb{N}$ such that $\sigma \in (\mathcal{I}_N)^n$. 
We denote the length of $\sigma \in (\mathcal{I}_N)^*$ by $|\sigma|$. 
For an LF system $\{h_i\}_{i \in \mathcal{I}_N}$  and $\sigma = (\sigma_1, \ldots, \sigma_n) \in (\mathcal{I}_N)^*$,  
we let 
\[ h_{\sigma}(x) \coloneqq h_{\sigma_1} \circ \cdots \circ h_{\sigma_n} (x), \ x \in [0,1].  \]

\begin{Lem}\label{lem:LF-lower}
Let $\{h_i\}_{i \in \mathcal{I}_N}$ be an LF system. 
Then, there exists $c \in (0,1)$ such that for every $\sigma \in (\mathcal{I}_N)^*$, 
$$ h_{\sigma}(1) - h_{\sigma}(0) \ge c^{|\sigma|}. $$
\end{Lem}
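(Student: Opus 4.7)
The plan is to combine the mean value theorem with a uniform positive lower bound on the derivative of each $h_i$ on $[0,1]$. First I would observe that, by Definition~\ref{def:lf-family}(1), the denominator $c_i x + d_i$ is a convex combination of the two positive numbers $d_i$ and $c_i + d_i$ as $x$ ranges over $[0,1]$, and hence stays uniformly away from zero. Consequently each $h_i$ is smooth on $[0,1]$ with
\[ h_i'(x) = \frac{a_i d_i - b_i c_i}{(c_i x + d_i)^2} > 0. \]
By continuity on the compact interval $[0,1]$ the quantity $\alpha_i := \min_{x \in [0,1]} h_i'(x)$ is strictly positive, and I set $\alpha := \min_{i \in \mathcal{I}_N} \alpha_i > 0$; the mean value theorem then gives $h_i(b) - h_i(a) \ge \alpha(b-a)$ for every $[a,b] \subseteq [0,1]$ and every $i \in \mathcal{I}_N$.

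Second, let $\beta := \min_{i \in \mathcal{I}_N}(h_i(1) - h_i(0))$. Strict monotonicity of each $h_i$ gives $\beta > 0$, while the compatibility condition makes the intervals $[h_i(0), h_i(1)]$ tile $[0,1]$, so
\[ \sum_{i \in \mathcal{I}_N} (h_i(1) - h_i(0)) = h_{N-1}(1) - h_0(0) = 1, \]
and together with $N \ge 2$ this forces $\beta \le 1/N < 1$. For $\sigma = (\sigma_1, \ldots, \sigma_n) \in (\mathcal{I}_N)^*$ I would iterate the MVT estimate: applying it to $h_{\sigma_1}$ on the subinterval of $[0,1]$ with endpoints $h_{\sigma_2} \circ \cdots \circ h_{\sigma_n}(0)$ and $h_{\sigma_2} \circ \cdots \circ h_{\sigma_n}(1)$ yields
\[ h_\sigma(1) - h_\sigma(0) \ge \alpha \bigl( h_{\sigma_2} \circ \cdots \circ h_{\sigma_n}(1) - h_{\sigma_2} \circ \cdots \circ h_{\sigma_n}(0) \bigr), \]
and after $n-1$ such steps one reaches $h_\sigma(1) - h_\sigma(0) \ge \alpha^{n-1}(h_{\sigma_n}(1) - h_{\sigma_n}(0)) \ge \alpha^{n-1}\beta$. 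Setting $c := \min\{\alpha, \beta\}$ and using $\alpha^{n-1}\beta \ge c^{n-1} \cdot c = c^n$ finishes the proof.

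No serious obstacle is expected here; the only points demanding real attention are verifying the uniform positivity of $h_i'$ on all of $[0,1]$ (which rests essentially on the non-degeneracy condition (1) in Definition~\ref{def:lf-family} keeping $c_i x + d_i$ bounded away from zero) and, at the end, repackaging the two separate constants $\alpha$ and $\beta$ into a single base $c \in (0,1)$ whose strict inequality $c < 1$ is guaranteed by $\beta \le 1/N$.
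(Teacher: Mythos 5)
Your proof is correct, and it takes a genuinely different route from the paper's. The paper normalizes so that $a_i d_i - b_i c_i = 1$, passes to the matrix representation $A_i = \begin{pmatrix} a_i & b_i \\ c_i & d_i \end{pmatrix}$, and exploits the closed-form identity $h_\sigma(1) - h_\sigma(0) = \dfrac{1}{s_n(r_n+s_n)}$ for the product $A_{\sigma_1}\cdots A_{\sigma_n} = \begin{pmatrix} p_n & q_n \\ r_n & s_n \end{pmatrix}$, together with a trivial entrywise bound $|r_n| + |s_n| \le 2 C_1^n$; this yields the explicit constant $c = 1/(4 C_1^2)$. Your argument instead uses that $h_i'(x) = (a_i d_i - b_i c_i)/(c_i x + d_i)^2$ is bounded below by a positive constant $\alpha_i$ because $c_i x + d_i$ is a convex combination of $d_i > 0$ and $c_i + d_i > 0$, then iterates the mean value theorem. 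Both are valid; the matrix approach is tied to the specific algebra of M\"obius maps and gives an immediately explicit base, while your MVT approach is more elementary and would generalize verbatim to any $C^1$ compatible system with derivatives bounded away from zero. One small simplification is available in your argument: since $\int_0^1 h_i'(x)\,dx = h_i(1)-h_i(0) \le 1/N < 1$ forces $\alpha_i < 1$ already, you could iterate one more step to get $h_\sigma(1)-h_\sigma(0) \ge \alpha^{|\sigma|}$ directly and dispense with the auxiliary constant $\beta$ altogether; but the repackaging via $c = \min\{\alpha,\beta\}$ that you use is also perfectly sound.
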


\begin{proof}
Let $A_i \coloneqq  \begin{pmatrix} a_i & b_i \\ c_i & d_i \end{pmatrix}$, $i \in \mathcal{I}_N$. 
We assume that $a_i d_i - b_i c_i = 1$ for each $i \in \mathcal{I}_N$. 
Let $(i_n)_n \in (\mathcal{I}_N)^{\mathbb{N}}$ and 
\[ \begin{pmatrix} p_n & q_n \\ r_n & s_n \end{pmatrix} = A_{i_1} \cdots A_{i_n}, \ n \ge 1. \]
Let $C_1 \coloneqq \max\{|a_i|, |b_i|, |c_i|, |d_i| \colon i \in \mathcal{I}_N\}$. 
Then, for every $n$, 
$|r_{n+1}| + |s_{n+1}| \le 2C_1 (|r_n| + |s_n|)$. 
Hence, 
$|r_n| + |s_n| \le (2C_1)^n$  for every $n$. 

Assume that $\sigma = (i_1, \ldots, i_n)$. 
Then, $|\sigma| = n$ and by using the equality $p_n s_n - q_n r_n = 1$, 
\[ h_{\sigma}(1) - h_{\sigma}(0) = \frac{p_n + q_n}{r_n + s_n} - \frac{q_n}{s_n} \]
\[ = \frac{1}{s_n (r_n + s_n)} \ge \frac{1}{(|r_n| + |s_n|)^2} \ge \frac{1}{4^n |C_1|^{2n}}. \] 
\end{proof}

We give an upper bound for $h_{\sigma}(1) - h_{\sigma}(0)$. 

\begin{Lem}\label{lem:LF-upper}
Let $\{h_i\}_{i \in \mathcal{I}_N}$ be an LF system. 
Then, there exists $C > 0$ such that for every $\sigma \in (\mathcal{I}_N)^*$, 
$$ h_{\sigma}(1) - h_{\sigma}(0) \le \frac{C}{|\sigma|}. $$
\end{Lem}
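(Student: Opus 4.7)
The plan is to dominate every $h_i$ by a common modulus of continuity of the form $G(t) = t/(Kt + 1)$ with a single $K > 0$, and then exploit the clean iteration identity $G^n(t) = t/(nKt+1)$ to extract the $1/n$ decay. This upgrades the soft contractivity argument of Lemma \ref{lem:lf-dense} into a quantitative one.

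First, I would read off the modulus $\omega_{h_i}(t) := \sup\{|h_i(x) - h_i(y)| : x, y \in [0,1],\ |x - y| \le t\}$ from the derivative $h_i'(x) = (a_id_i - b_ic_i)/(c_ix + d_i)^2$: depending on the sign of $c_i$, the map $h_i$ is concave, affine, or convex, so the supremum is attained at a specific endpoint (at $x = 0$ if $c_i > 0$, at $x = 1 - t$ if $c_i < 0$, and trivially if $c_i = 0$). The resulting closed-form expressions are precisely the ones already computed case by case in the proof of Lemma \ref{lem:lf-dense}.

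Second, I would show that in each of the three cases, condition (2) implies
\[
\omega_{h_i}(t) \le \frac{t}{K_i t + 1}, \qquad t \in [0,1],
\]
for some explicit $K_i > 0$: namely $K_i = c_i/d_i$ when $c_i > 0$, $K_i = -c_i/(c_i + d_i)$ when $c_i < 0$, and $K_i = (d_i - a_i)/a_i$ when $c_i = 0$ (this last is positive because $N \ge 2$ and compatibility force $a_i/d_i < 1$, as observed in the proof of Lemma \ref{lem:lf-dense}). Setting $K := \min_{i \in \mathcal{I}_N} K_i > 0$ (a finite minimum since $N$ is finite) and $G(t) := t/(Kt + 1)$, one then has $\omega_{h_i}(t) \le G(t)$ on $[0,1]$ for every $i$, and $G$ is increasing on $[0,1]$.

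Finally, a direct induction gives $G^n(t) = t/(nKt + 1)$, and a parallel induction on $|\sigma|$, using the monotonicity of $\omega_{h_{i_1}}$ in the radius followed by $\omega_{h_{i_1}} \le G$ at the radius $G^{|\sigma|-1}(1) \in [0,1]$, yields
\[
h_\sigma(1) - h_\sigma(0) \le G^{|\sigma|}(1) = \frac{1}{K|\sigma| + 1} \le \frac{1}{K|\sigma|},
\]
so the lemma holds with $C = 1/K$.

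The main obstacle, though modest in size, is the algebraic verification that each $K_i$ coming out of the three cases is strictly positive, since condition (2) permits equality $\det = d_i^2$ or $\det = (c_i + d_i)^2$, which rules out any uniform strict-contraction constant. The specific shape $t/(Kt+1)$ is what still absorbs these boundary cases and delivers a positive $K$; once this is in hand, the monotone iteration of $G$ is essentially automatic and matches the sharp order of decay realized by orbits of a single $h_i$ with a parabolic boundary fixed point.
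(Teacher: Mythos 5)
Your proof is correct and essentially the same as the paper's: both dominate the moduli of all the $h_i$ by a single function $t \mapsto t/(Kt+1)$ (your $K$ is the paper's $C_3$) and iterate to get the $1/n$ decay, the only cosmetic difference being that you fold the $c_i = 0$ case into the same functional form with $K_i = (d_i - a_i)/a_i$ rather than handling it by a separate linear bound and merging at the end.
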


\begin{proof}
In this proof, we assume that $a_i d_i - b_i c_i = 1$ for each $i \in \mathcal{I}_N$. 
We first show that 
\begin{equation}\label{eq:denominator-lower-bound}
(c_i x + d_i)(c_i y + d_i) \ge 1 + |c_i| |x-y|, \ x, y \in [0,1]. 
\end{equation}
We assume that $x \le y$. 
Assume $c_i > 0$. 
Then, by Definition \ref{def:lf-family} (2), 
\[ c_i y + d_i \ge c_i x + d_i \ge \min_{z \in [0,1]} c_i z + d_i  = d_i \ge 1.\] 
Hence, 
\[ (c_i x + d_i)(c_i y + d_i) = (c_i x + d_i)^2 + c_i (y-x) (c_i x + d_i) \ge 1+ c_i (y-x). \]
Assume $c_i < 0$. 
Then, by Definition \ref{def:lf-family} (2) and (1), 
\[ c_i x + d_i \ge c_i y + d_i \ge \min_{z \in [0,1]} c_i z + d_i =  c_i + d_i = |c_i + d_i| \ge 1.\] 
Hence, 
\[ (c_i x + d_i)(c_i y + d_i) = (c_i y + d_i)^2 - c_i (y-x) (c_i y + d_i) \ge 1 - c_i (y-x). \] 
If $c_i = 0$, then $(c_i x + d_i)(c_i y + d_i) = d_i^2 \ge 1 = 1 + |c_i| |x-y|$. 
Thus we see \eqref{eq:denominator-lower-bound}. 

By \eqref{eq:difference-LF} and \eqref{eq:denominator-lower-bound}, 
\[ |h_i (x) - h_i (y)| \le \frac{|x-y|}{1+|c_i| |x-y|}. \]
If $c_i = 0$, then 
\[ |h_i (x) - h_i (y)| = \frac{a_i}{d_i} |x-y|. \] 
Since $h_i (1) - h_i (0) < 1$, 
it holds that 
$a_i < d_i$. 
Let 
$C_1 \coloneqq \min\{|c_i| \colon c_i \ne 0\}$ and $C_2 \coloneqq \max\left\{\dfrac{a_i}{d_i} \colon c_i = 0 \right\}$. 
Then, $C_1 > 0$, $C_2 < 1$ and 
\[ \max_{i \in \mathcal{I}_N} |h_i (x) - h_i (y)| \le \max\left\{ \frac{|x-y|}{1+ C_1 |x-y|}, C_2 |x-y| \right\}. \]
We can take $C_3 \in (0, C_1)$ such that 
\[ \max_{i \in \mathcal{I}_N}  |h_i (x) - h_i (y)| \le \frac{|x-y|}{1+ C_3 |x-y|}, \ x, y \in [0,1]. \] 

Let $A_n \coloneqq \max\left\{ h_{\sigma}(1) - h_{\sigma}(0) \colon \sigma \in (\mathcal{I}_N)^*, |\sigma| = n \right\}$.
Then, $A_1 \le \dfrac{1}{1+C_3}$ and 
$A_{n+1} \le \dfrac{A_n}{1+C_3 A_n}, n \ge 1$. 
Hence, 
\[ A_n \le \frac{1}{nC_3 + 1} \le \frac{1}{nC_3}. \qedhere\]
\end{proof}

\begin{Prop}[Weak version of regularity]\label{prop:main-weak-regularity}
Assume that $\{f_i\}_{i \in \mathcal{I}_N}$ and $\{g_i\}_{i \in \mathcal{I}_N}$ are LF systems and let $\varphi$ be the solution of \eqref{eq:gen-dR-def-1}.   
Then, there exists a constant $C > 0$ such that for every $y \in (0,1)$ and $r > 0$ satisfying that $0 \le y-r < y+r \le 1$, 
\[ \mu_{\varphi}\left([y-r, y+r]\right) \ge \exp(-C/r). \]
\end{Prop}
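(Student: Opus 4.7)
The plan is to exploit the exact correspondence between $f$-cylinders and $g$-cylinders given by \eqref{eq:solution-dyadic}, combined with the two-sided size estimates of Lemmas \ref{lem:LF-lower} and \ref{lem:LF-upper}. Since $\mu_{\varphi}([y-r,y+r]) = \varphi(y+r) - \varphi(y-r)$, it suffices to produce a cylinder $f_{\sigma}([0,1])$ sitting inside $[y-r, y+r]$ whose depth $|\sigma|$ is of order $1/r$; the $g$-side of \eqref{eq:solution-dyadic} will then convert this depth bound into the desired exponential lower bound.

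More concretely, let $C_f$ denote the constant of Lemma \ref{lem:LF-upper} applied to $\{f_i\}_{i}$, and set $n := \lceil 2 C_f / r \rceil$, so that every $f$-cylinder of depth $n$ has length at most $C_f/n \le r/2$. The depth-$n$ cylinders $\{f_{\sigma}([0,1]) : |\sigma|=n\}$ cover $[0,1]$ with pairwise disjoint interiors (as recorded at the end of the proof of Lemma \ref{lem:wkc-dense}), so some $\sigma_0$ with $|\sigma_0|=n$ satisfies $y \in f_{\sigma_0}([0,1])$. Since this cylinder contains $y$ and has diameter at most $r/2$, it is contained in $[y - r/2, y + r/2]$, which in turn lies in $[y-r, y+r]$.

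Applying \eqref{eq:solution-dyadic} then gives
\[
\mu_{\varphi}([y-r, y+r]) \ge \mu_{\varphi}(f_{\sigma_0}([0,1])) = \varphi(f_{\sigma_0}(1)) - \varphi(f_{\sigma_0}(0)) = g_{\sigma_0}(1) - g_{\sigma_0}(0),
\]
and Lemma \ref{lem:LF-lower} applied to $\{g_i\}_{i}$ produces $g_{\sigma_0}(1) - g_{\sigma_0}(0) \ge c_g^{n}$ for some $c_g \in (0,1)$. Substituting $n \le 2C_f/r + 1$, one obtains
\[
\mu_{\varphi}([y-r, y+r]) \ge \exp\bigl(-(2 C_f/r + 1)|\log c_g|\bigr) \ge \exp(-C/r)
\]
for a suitable constant $C>0$, the additive term being absorbed using $r \le 1/2$. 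The argument is essentially a direct matching of the two quantitative estimates, so I do not anticipate a genuine obstacle; the only mild subtlety is ensuring that a single constant $C$ works uniformly over all admissible pairs $(y,r)$, which the above absorbing step handles.
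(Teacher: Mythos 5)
Your proof is correct and follows essentially the same route as the paper: apply Lemma~\ref{lem:LF-upper} to $\{f_i\}_i$ to locate a depth-$n$ cylinder (with $n \asymp 1/r$) inside $[y-r,y+r]$, push it through \eqref{eq:solution-dyadic}, and apply Lemma~\ref{lem:LF-lower} to $\{g_i\}_i$ to get the $c^n$ lower bound. The only cosmetic difference is the factor of $2$ in your choice of $n$ (the paper picks $n$ with $C/n < r \le C/(n-1)$, which already forces the depth-$n$ cylinder through $y$ into $[y-r,y+r]$), and your explicit absorption of the additive term using $r \le 1/2$ is a detail the paper leaves implicit.
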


This might be regarded as a weak version of the regularity of $\mu_{\varphi}$ in the sense of Ullman--Stahl--Totik. 
See \cite[Criterion 1.1]{Mantica2019}.
However, the following proof does not yield the regularity.  

\begin{proof}
We apply Lemma \ref{lem:LF-lower} to $\{g_i\}_{i \in \mathcal{I}_N}$ and Lemma \ref{lem:LF-upper} to $\{f_i\}_{i \in \mathcal{I}_N}$.  
Let $C$ be the constant appearing in Lemma \ref{lem:LF-upper}. 
Take $y \in (0,1)$ and $r > 0$ such that $0 \le y-r < y+r \le 1$ arbitrarily. 
Let $n$ be a natural number such that $\dfrac{C}{n} < r \le \dfrac{C}{n-1}$. 
We let $\dfrac{C}{n-1} \coloneqq \infty$ for $n=1$. 
Then, by the compatibility condition and Lemma \ref{lem:LF-upper},  
there exist $i_1, \ldots, i_n \in \mathcal{I}_N$ such that 
$$ \left[ f_{i_1} \circ \cdots \circ f_{i_n}(0), f_{i_1} \circ  \cdots \circ f_{i_n}(1) \right] \subset [y-r, y+r].$$
Hence, by \eqref{eq:solution-dyadic}, 
\[ \mu_{\varphi}\left([y-r, y+r]\right) \ge \mu_{\varphi}\left(    \left[ f_{i_1} \circ \cdots \circ f_{i_n}(0), f_{i_1} \circ  \cdots \circ f_{i_n}(1) \right]   \right) \]
\[ = \varphi(f_{i_1} \circ  \cdots \circ f_{i_n}(1)) - \varphi(f_{i_1} \circ  \cdots \circ f_{i_n}(0)) \]
\[ = g_{i_1} \circ  \cdots \circ g_{i_n}(1) - g_{i_1} \circ  \cdots \circ g_{i_n}(0). \]

By Lemma \ref{lem:LF-lower}, there exists a constant $c \in (0,1)$ depending only on $\{g_i\}_i$ such that 
\[ g_{i_1} \circ  \cdots \circ g_{i_n}(1) - g_{i_1} \circ  \cdots \circ g_{i_n}(0) \ge c^{n}. \]
By the assumption for $n$, we obtain that $c^{n} \ge c^{1+C/r}$. 
Thus, we obtain that 
\[ \mu_{\varphi}([y-r, y+r]) \ge c^{1+C/r}. \qedhere\]
\end{proof}

\section{Singularity}\label{sec:singu-non-linear}

In this section, we consider \eqref{eq:gen-dR-def-1} under the setting that $\{g_i\}_{i \in \mathcal{I}_N}$ is an LF-system on $[0,1]$, and, $\{f_i\}_{i \in \mathcal{I}_N}$ is  a weakly contractive D-system on $[0,1]$. 
By Lemma \ref{lem:wkc-dense} and Lemma \ref{lem:lf-dense}, 
both $\{f_i\}_{i \in \mathcal{I}_N}$ and $\{g_i\}_{i \in \mathcal{I}_N}$ are D-systems. 
Hence, by Lemma \ref{lem:exist-unique-gen-dR}, there exists a unique continuous strictly increasing solution $\varphi$ of \eqref{eq:gen-dR-def-1}. 

We consider the singularity for the solution $\varphi$. 
We will give sufficient conditions for $\dim_H \mu_{\varphi} < 1$. 
It is known that if $\dim_H \mu_{\varphi} < 1$, then $\mu_{\varphi}$ is singular with respect to the Lebesgue measure. 
By \cite[Definition 9.2.2 and Theorem 9.2.4]{Rana2002}, 
this implies that $\varphi$ is a singular function on $[0,1]$, specifically, $\varphi$ is a strictly increasing and continuous function on $[0,1]$ such that its derivative is equal to zero almost everywhere. 

The main purpose of this section is to extend \cite[Theorem 43]{Okamura2020} to the case where $f_i$ can be non-linear. 
In \cite{Okamura2020}, the author dealt with many objects other than the conjugate functional equation, so the arguments were not specialized for the functional equation. 
We introduce some new notations and the formulations of our main results and some of the arguments in the proofs are different from \cite[Theorem 43]{Okamura2020}. 
The author believes that these changes will make the arguments more transparent. 

We assume that an LF-system $\{g_i\}_{i \in \mathcal{I}_N}$ is defined by 
$g_i (x) \coloneqq \dfrac{a_i x + b_i}{c_i x + d_i}, x \in [0,1], (i \in \mathcal{I}_N)$, 
where $a_i, b_i, c_i, d_i, (i \in \mathcal{I}_N)$, are real numbers satisfying the conditions (1), (2) and (3) in Definition \ref{def:lf-family}, specifically,  \\ 
(1)  It holds that $c_i +d_i > 0$ and  $d_i > 0$ for $i \in \mathcal{I}_N$.\\ 
(2) It holds that 
$$ 0 < a_i d_i - b_i c_i \le \min\{d_i^2, (c_i+d_i)^2\}, \ i \in \mathcal{I}_N.$$ 
(3) It holds that $b_0 = 0$ and 
$\dfrac{a_i + b_i}{c_i + d_i} = \dfrac{b_{i+1}}{d_{i+1}}$ for $i \in \mathcal{I}_N$, where we let $\dfrac{b_N}{d_{N}} \coloneqq 1$. 

{\it In this section, we assume $d_i = 1$ for each $i \in \mathcal{I}_N$. } 

We show that $(1-a_i)^2 + 4b_i c_i \ge 0$. 
If $c_i \ge 0$, then this is true since $b_i \ge 0$. 
If $c_i < 0$, then by the compatibility condition, $a_i + b_i \le c_i + 1$. 
Hence, 
$0 \le b_i - c_i \le 1- a_i$, and then $(1-a_i)^2 \ge (b_i - c_i)^2$. 
Hence, $(1-a_i)^2 + 4b_i c_i  \ge (b_i + c_i)^2 \ge 0$.  

Let 
\[ \alpha \coloneqq \min\left\{0,  \frac{c_0}{1-a_0}, \frac{a_i -1 + \sqrt{(1-a_i)^2 + 4b_i c_i}}{2b_i} \, \middle| \, 1 \le i \le N-1 \right\},\]
where we {\it delete}\footnote{This is different from \cite{Okamura2020}.} $\dfrac{c_0}{1-a_0}$ if $a_0 = 1$. 
Let 
\[ \beta \coloneqq \max\left\{0,  \frac{c_0}{1-a_0}, \frac{a_i -1 + \sqrt{(1-a_i)^2 + 4b_i c_i}}{2b_i} \, \middle| \, 1 \le i \le N-1 \right\}, \]
where $\dfrac{c_0}{1-a_0} \coloneqq +\infty$ if $a_0 = 1$. 

Let $Y \coloneqq [\alpha, \beta]$ and 
\[ G_i (y) \coloneqq \frac{(a_i - b_i c_i)(y+1)}{(b_i y +1)((a_i + b_i)y + (c_i + 1))} = \frac{(b_{i+1}-b_i)(y+1)}{(b_{i+1}y+1)(b_i y + 1)},  \ y \in Y, \]
as well as 
\[ H_i (y) \coloneqq \frac{a_i y + c_i}{b_i y + 1}, \ y \in Y, \]
where we let $b_N \coloneqq 1$. 
We remark that $G_0 (y) = \dfrac{b_1 (y+1)}{b_1 y + 1}$ and $G_{N-1}(y) = \dfrac{1-b_{N-1}}{b_{N-1}y + 1}$. 

Let 
\[ G_0 (+\infty) \coloneqq 1, G_i (+\infty) \coloneqq 0, H_0 (+\infty) \coloneqq +\infty, H_i (+\infty) \coloneqq \frac{a_i}{b_i}, 1 \le i \le N-1.   \]
Then, 
\[ G_i (+\infty) = \lim_{y \to +\infty} G_i (y),  \ H_i (+\infty) = \lim_{y \to +\infty} H_i (y), \ i \in \mathcal{I}_N. \]

The following is the same as \cite[Lemma 27]{Okamura2020}, so we will not repeat the proof. 
\begin{Lem}\label{lem:basic-GH}
We have the following: \\ 
(i) $0 \in Y \subset [-1, +\infty]$. \\
(ii) Each $H_i$ is well-defined on $Y$. \\
(iii) $\alpha \le H_i (\alpha) \le H_i (\beta) \le \beta$, that is, $H_i (Y) \subset Y$. \\
(iv) $\displaystyle \sum_{i \in \mathcal{I}_{N}} G_i (y) = 1, \  y \in Y.$ 
\end{Lem}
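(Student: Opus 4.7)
The plan is to verify the four claims in turn, using conditions (1)--(3) of Definition \ref{def:lf-family} and one recurring ingredient: the inequality $c_i + 1 \ge a_i + b_i$ for every $i$, which is the compatibility condition $b_{i+1} = (a_i + b_i)/(c_i + 1)$ combined with $b_{i+1} \le b_N = 1$, where $0 = b_0 < b_1 < \dots < b_N = 1$ follows from the strict monotonicity $g_j(0) < g_j(1)$.

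For (i), the inclusion $0 \in Y$ is immediate since $0$ lies in both defining sets, so $\alpha \le 0 \le \beta$. To show $\alpha \ge -1$, I check each finite value. The term $c_0/(1-a_0)$ (when $a_0 < 1$) is handled by a sign split on $c_0$: if $c_0 \ge 0$ the value is $\ge 0$, and if $c_0 < 0$ then $(c_0+1)^2 < c_0 + 1$ combined with $a_0 \le (c_0+1)^2$ gives $a_0 \le c_0 + 1$, which is equivalent to $c_0/(1-a_0) \ge -1$. For each term $\frac{a_i-1+\sqrt{(1-a_i)^2+4b_ic_i}}{2b_i}$ with $i \ge 1$, being $\ge -1$ reduces to $\sqrt{(1-a_i)^2+4b_ic_i} \ge 1-a_i-2b_i$; this is trivial when the right side is nonpositive, and after squaring it is equivalent to the recurring inequality $c_i+1 \ge a_i+b_i$. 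Part (ii) is then easy: for $i = 0$ it is trivial; for $i \ge 1$, $b_i < 1$ gives $-1/b_i < -1 \le \alpha$, so $b_iy+1 > 0$ throughout $Y$.

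The main work is (iii). Direct differentiation gives $H_i'(y) = (a_i - b_ic_i)/(b_iy+1)^2 > 0$ by condition (2), so $H_i$ is strictly increasing on $Y$ and it suffices to check the endpoints. For $i \ge 1$ the equation $H_i(y) = y$ rearranges to $b_iy^2 + (1-a_i)y - c_i = 0$, whose larger root $y_+$ is precisely the term in the defining sets of $\alpha$ and $\beta$; writing
\[
H_i(y) - y = \frac{-b_i(y-y_-)(y-y_+)}{b_iy+1}
\]
shows $H_i(y) - y \ge 0$ on $[y_-, y_+]$ and $\le 0$ outside. By construction $\alpha \le y_+ \le \beta$. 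The essential additional observation is that $y_- \le -1$: this reduces, after the analogous squaring argument, to the same inequality $c_i+1 \ge a_i+b_i$, so $y_- \le -1 \le \alpha$. Hence $\alpha \in [y_-, y_+]$, giving $H_i(\alpha) \ge \alpha$, while $\beta \ge y_+$ gives $H_i(\beta) \le \beta$. The $i=0$ case with $a_0 < 1$ is handled by $H_0(y) - y = (a_0-1)(y - c_0/(1-a_0))$ together with the fact that $c_0/(1-a_0)$ appears in the defining sets. The boundary case $a_0 = 1$ forces $c_0 \ge 0$ from condition (2) combined with $c_0 > -1$, so $H_0(y) = y + c_0 \ge y$; the conventions $H_0(+\infty) = +\infty$ and $\beta = +\infty$ in this case close the argument.

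Finally, (iv) is pure algebra using the second form of $G_i$ and the partial-fraction identity
\[
\frac{b_{i+1}-b_i}{(b_iy+1)(b_{i+1}y+1)} = \frac{1}{y}\left(\frac{1}{b_iy+1} - \frac{1}{b_{i+1}y+1}\right),
\]
so that the sum telescopes, yielding $\sum_i G_i(y) = (y+1) \cdot y^{-1}(1 - 1/(y+1)) = 1$ for $y \ne 0$; the value $y=0$ is checked directly from $G_i(0) = b_{i+1}-b_i$ summing to $b_N - b_0 = 1$. I expect the endpoint placement in (iii), specifically the observation $y_- \le -1$, to be the main technical moment; everything else is bookkeeping with the defining inequalities.
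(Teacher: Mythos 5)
The paper itself gives no proof of this lemma, deferring to \cite[Lemma 27]{Okamura2020}, so a line-by-line comparison is not possible; I can only assess your argument on its own terms, and it is correct. Your organizing observation $a_i + b_i \le c_i + 1$ (from compatibility plus $b_{i+1}\le b_N=1$) does drive all the nontrivial endpoint work in (i) and (iii), and the factorization $H_i(y)-y=\frac{-b_i(y-y_-)(y-y_+)}{b_iy+1}$ together with $y_-\le -1\le\alpha\le y_+\le\beta$ cleanly yields $H_i(\alpha)\ge\alpha$ and $H_i(\beta)\le\beta$. Two small points to make explicit. First, you implicitly assume the discriminant $(1-a_i)^2+4b_ic_i$ is nonnegative; this is cleanest seen by evaluating the quadratic $b_iy^2+(1-a_i)y-c_i$ at $y=-1$, which gives $(a_i+b_i)-(c_i+1)\le 0$, so (leading coefficient $b_i>0$) one real root lies on each side of $-1$ --- this simultaneously gives realness of $y_\pm$ and $y_-\le -1\le y_+$ without the squaring case-split. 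Second, in (iv) the telescoping manipulation with the factor $1/y$ and $1/(y+1)$ is valid only for $y\notin\{-1,0,+\infty\}$; you handle $y=0$, but when $\alpha=-1$ or $\beta=+\infty$ you should also remark that the identity persists at the endpoints, either by continuity of the $G_i$ on $Y$ or by direct evaluation (at $y=-1$ the factor $y+1$ cancels only in $G_{N-1}$, giving $G_{N-1}(-1)=1$ and the rest $0$; at $+\infty$ the paper's conventions $G_0(+\infty)=1$, $G_i(+\infty)=0$ apply).
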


By (iii), we can compose $H_i$ freely on $Y$.  
We consider the Alexandroff compactification when $\beta = +\infty$. 
Then, $Y = [\alpha, \beta]$ is compact, and $G_i \colon Y \to [0,1]$ and $H_i \colon Y \to Y$ are continuous. 
We remark that $Y, G_i, H_i$ are defined by $(g_i)_i$, not by $(f_i)_i$. 

We denote  the set of fixed points of $H_i$ on $Y$ by $\textup{Fix}(H_i)$. 

The set $\textup{Fix}(H_i)$ is a singleton and 
\[ \textup{Fix}(H_0) = \left\{\dfrac{c_0}{1-a_0} \right\}, \ \textup{Fix}(H_i) = \left\{ \dfrac{a_i -1 + \sqrt{(1-a_i)^2 + 4b_i c_i}}{2b_i} \right\}, \ i \ge 1.\]

If $a_0 = 1$, then $c_0 > 0$ and hence $ \textup{Fix}(H_0) = \{+\infty\}$. 
If $a_0 = 1$ and $c_0 = 0$, then, by using the assumption that $d_0 = 1$, we obtain $g_0 (x) = x$ and hence $N=1$. 
This contradicts the assumption that $N \ge 2$. 

Let 
\begin{align*} 
\mathcal{P}_N &\coloneqq \left\{(p_i)_{i \in \mathcal{I}_N} \colon p_i > 0, \sum_{i} p_i = 1 \right\} \ \textup{ and}  \\ 
\overline{\mathcal{P}_N} &\coloneqq \left\{(p_i)_{i \in \mathcal{I}_N} \colon p_i \ge 0, \sum_{i} p_i = 1 \right\}.
\end{align*}

Let
\begin{align*} 
{\bf G}(y) &\coloneqq (G_0 (y), \dots, G_{N-1}(y)) \in \overline{\mathcal{P}_N}, \  y \in Y, \\
\mathfrak{p}_i &\coloneqq {\bf G} \left(\textup{Fix}(H_i)\right) \in \overline{\mathcal{P}_N}, \ \ \ \ \ \ \ \ \ \ \ \ \ \ i \in \mathcal{I}_N, \ \ \ \ \ \ \ \ \ \textup{ and,} \\ 
\dim_H \mu_{\varphi} &\coloneqq \inf\left\{\dim_H A \colon A \in \mathcal{B}([0,1]), \mu_{\varphi} (A) = 1 \right\}, 
\end{align*}
where $\mathcal{B}([0,1])$ is the Borel $\sigma$-algebra on $[0,1]$. 
The following results extend \cite[Theorem 43 (i)]{Okamura2020} to the case where $\{f_i\}_{i \in \mathcal{I}_N}$ may be non-affine maps.   

\begin{Thm}\label{thm:main-singular-1}
If $\left|\{\mathfrak{p}_i\}_{i \in \mathcal{I}_N} \right| \ge 2$, 
then for every $\epsilon \in (0,1/2)$, 
there exists $\delta > 0$ such that it holds that $\dim_H \mu_{\varphi} < 1$ 
for every  contractive system $\{f_i\}_{i \in \mathcal{I}_N}$ on $[0,1]$ such that  
$\displaystyle \sum_{i \in \mathcal{I}_N} r_i  < 1+ \delta$ and $\epsilon \le r_i \le 1-\epsilon$ for each $i \in \mathcal{I}_N$, 
where we let $r_i \coloneqq \|f_i\|_{\textup{Lip}}$. 
\end{Thm}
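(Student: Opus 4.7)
The plan is to bound $\dim_H \mu_\varphi$ from above by a ratio of a stationary entropy to a Lyapunov-type exponent along the LF-Markov chain on $Y$, and to use $|\{\mathfrak{p}_i\}|\ge 2$ to keep this ratio strictly below $1$ throughout the window $\sum_i r_i < 1+\delta$.

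First I would derive the cylinder identity
\[
\mu_\varphi(I_\sigma) \;=\; g_\sigma(1) - g_\sigma(0) \;=\; \prod_{k=1}^{n} G_{i_k}(y_{k-1}), \qquad \sigma = (i_1,\ldots,i_n),
\]
where $I_\sigma := f_{i_1}\circ\cdots\circ f_{i_n}([0,1])$, $y_0 := 0$, and $y_k := H_{i_k}(y_{k-1})$; this is a direct consequence of the matrix-product calculus for $\{g_i\}$ underlying Lemma \ref{lem:LF-lower} combined with \eqref{eq:solution-dyadic}. Thus under $\mu_\varphi$, the pair $(i_k, y_k)$ forms a place-dependent Markov chain on $\mathcal{I}_N \times Y$ with kernel $K(y,\cdot) := \sum_i G_i(y)\delta_{H_i(y)}$, and the invariant-measure analysis already developed in \cite{Okamura2020} supplies a unique stationary measure $\nu$ on $Y$ and $\mu_\varphi$-a.s.\ Birkhoff convergence. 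From the weak-contraction bound $|I_\sigma| \le \prod_k r_{i_k}$ and the inclusion $B(x,|I_\sigma|) \supset I_\sigma$ for $x \in I_\sigma$, the standard lower pointwise-dimension argument applied along the cylinder scale yields, $\mu_\varphi$-a.e.,
\[
\underline{d}(\mu_\varphi, x) \;\le\; \lim_{n\to\infty} \frac{-\sum_{k=1}^{n}\log G_{i_k}(y_{k-1})}{-\sum_{k=1}^{n}\log r_{i_k}} \;=\; \frac{h(\nu)}{-\sum_i p_i \log r_i} \;=:\; R,
\]
with $h(\nu) := -\int_Y \sum_i G_i(y)\log G_i(y)\,d\nu(y)$ and $p_i := \int_Y G_i\,d\nu$, so $\dim_H \mu_\varphi \le R$; the hypothesis $\epsilon \le r_i \le 1-\epsilon$ keeps $-\sum_i p_i\log r_i$ inside $[-\log(1-\epsilon), -\log\epsilon]$.

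Next, writing $q_i := r_i/\sum_j r_j$ and using the Kullback--Leibler identity
\[
-\sum_i p_i\log r_i \;=\; H(p) + D(p\|q) - \log\sum_j r_j \;\ge\; H(p) - \log(1+\delta),
\]
with $H(p) := -\sum_i p_i\log p_i$ and $D(p\|q) \ge 0$, the task reduces to proving $h(\nu) < H(p) - \log(1+\delta)$ for a suitable $\delta$. Jensen's inequality for the concave Shannon entropy gives
\[
h(\nu) \;=\; \int_Y H(\mathbf{G}(y))\,d\nu(y) \;\le\; H\!\left(\int_Y \mathbf{G}(y)\,d\nu(y)\right) \;=\; H(p),
\]
with equality iff $\mathbf{G}$ is $\nu$-a.s.\ constant. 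Under $|\{\mathfrak{p}_i\}|\ge 2$, there exist indices $i_1 \ne i_2$ with $\mathbf{G}(\textup{Fix}(H_{i_1})) \ne \mathbf{G}(\textup{Fix}(H_{i_2}))$; since each $H_i$ attracts every orbit in $Y$ to its own fixed point and $G_i > 0$ on the interior of $Y$, both fixed points lie in $\textup{supp}(\nu)$, and continuity of $\mathbf{G}$ forces strict non-constancy on a $\nu$-positive set. Consequently $\eta := H(p) - h(\nu) > 0$ is a constant depending only on $(g_i)$, and any $\delta$ with $\log(1+\delta) < \eta$ (independent of $\epsilon$) delivers $R < 1$.

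The hardest step is the strict entropy inequality $h(\nu) < H(p)$, which rests on three support-level facts about the place-dependent Markov chain: unique ergodicity of $\nu$, the membership $\textup{Fix}(H_i) \in \textup{supp}(\nu)$ for every $i$, and the quantitative conversion of $|\{\mathfrak{p}_i\}|\ge 2$ into a positive gap $\eta$ — the delicate case being when $\nu$ threatens to concentrate at a single common fixed point. The $\epsilon$-condition plays a purely technical role, providing uniform two-sided bounds on $-\sum_i p_i\log r_i$ so that the pointwise-dimension estimate and the Birkhoff convergence apply uniformly over all admissible $(f_i)$ in the perturbation window.
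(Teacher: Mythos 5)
Your proposal is a genuinely different route from the paper's: you aim at a Ledrappier--Young-type upper bound $\dim_H \mu_\varphi \le h(\nu)/(-\sum_i p_i\log r_i)$ via the local dimension at cylinder scales, Jensen's inequality for Shannon entropy, and a Kullback--Leibler decomposition of the Lyapunov exponent. The paper instead works directly with the relative entropy $s_N(p_j(0;z)\mid{\bf q})$, uses a martingale identity (Proposition \ref{prop:reduction-rel-ent}) to express $\limsup_n\frac{-\log R_n}{n}$ as a $\limsup$ of Ces\`aro averages of $s_N$, and then uses a coloring map $\mathscr{A}$ together with Azuma's inequality (Lemma \ref{lem:azuma-apply}) to bound that $\limsup$ away from $0$ \emph{without} ever invoking ergodicity. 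The cylinder-measure identity and the KL decomposition in your proposal are correct.

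The gap is the sentence ``the invariant-measure analysis already developed in \cite{Okamura2020} supplies a unique stationary measure $\nu$ on $Y$ and $\mu_\varphi$-a.s.\ Birkhoff convergence.'' Neither the cited paper nor this one establishes unique ergodicity of the place-dependent chain $(y_k)$ on $Y$, and the current paper is structured precisely to avoid needing it: Proposition \ref{prop:reduction-rel-ent} gives only an a.s.\ equality of $\limsup$'s (not a limit), and Lemma \ref{lem:azuma-apply} gives only a lower bound on visit frequencies via Azuma, not convergence of empirical measures. For a place-dependent chain with a continuum state space and probabilities $G_i(y)$ that can degenerate at $\partial Y$, unique ergodicity and Birkhoff convergence are nontrivial and would need to be proved (contractivity-on-average or Dini conditions, etc.), not asserted. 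Your claim that $\operatorname{Fix}(H_i)\in\operatorname{supp}(\nu)$ for every $i$ has the same unproved status, and it is exactly in the degenerate configurations (Lemma \ref{lem:alpha-beta} (vi)--(vii), i.e.\ $\operatorname{Fix}(H_0)=+\infty$ or $\operatorname{Fix}(H_{N-1})=-1$, giving $\mathfrak{p}_i\in\overline{\mathcal{P}_N}\setminus\mathcal{P}_N$) that this is most delicate; there, $\delta_{\operatorname{Fix}(H_i)}$ is itself a stationary measure of the kernel $K$, so uniqueness as stated is simply false, and your argument would have to single out the relevant ergodic component reached from $y_0=0$ and show it is not degenerate. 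The paper's Case 2 in the proof of Proposition \ref{prop:uniform-Rn} handles exactly this, using the extra hypothesis ${\bf q}\in\overline{\mathcal{P}_N(\epsilon/2)}$; your proposal does not distinguish these cases. Until unique ergodicity, Birkhoff convergence, and $\operatorname{Fix}(H_i)\in\operatorname{supp}(\nu)$ are actually supplied (or the argument is reformulated to avoid them, as the paper does), the claimed strict gap $\eta=H(p)-h(\nu)>0$ and the conclusion $R<1$ are not proved. The fact that your $\delta$ comes out independent of $\epsilon$ --- strictly stronger than the statement being proved --- should itself be a warning sign that a hypothesis is being used for free.
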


The probability vectors $\mathfrak{p}_i, i \in \mathcal{I}_N$, are determined only by $(\alpha, \beta, \{G_i\}_i, \{H_i\}_i)$ and depend only on $\{g_i\}_{i \in \mathcal{I}_N}$. 
The system $\{f_i\}_{i \in \mathcal{I}_N}$ is {\it not} related to this. 
This contains the case where $f_i (x) = a_i x + (a_0 + \cdots + a_{i-1})$ for each $i \in \mathcal{I}_N$.

We also deal with the case where $\left|\{\mathfrak{p}_i\}_{i \in \mathcal{I}_N} \right| = 1$. 
Let the total variation distance between probability vectors be 
\[ \left\| {\bf p} - {\bf q}  \right\|_1 \coloneqq \sum_{i \in \mathcal{I}_N} |p_i - q_i|, \ \ {\bf p} = (p_i)_i, {\bf q} = (q_i)_i \in \overline{\mathcal{P}_N}. \] 

\begin{Thm}\label{thm:main-singular-2}
If $\left|\{\mathfrak{p}_i\}_{i \in \mathcal{I}_N} \right| = 1$, 
then for every $\epsilon \in (0,1/2)$, 
there exists $\delta > 0$ such that $\dim_H \mu_{\varphi} < 1$ 
for every  contractive system $\{f_i\}_{i \in \mathcal{I}_N}$ on $[0,1]$ such that  
$\displaystyle \sum_{i \in \mathcal{I}_N} r_i < 1+ \delta$, $\epsilon \le r_i \le 1-\epsilon$ for each $i \in \mathcal{I}_N$,  and $\|{\bf r} - \mathfrak{p}_0 \|_1 \ge \epsilon$, 
where we let $r_i \coloneqq \|f_i\|_{\textup{Lip}}$ and ${\bf r} \coloneqq (r_0, \ldots, r_{N-1})$. 
\end{Thm}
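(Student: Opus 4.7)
The plan is to extend the martingale/ergodic argument of \cite[Theorem~43]{Okamura2020} from the affine to the present contractive setting, in which only the Lipschitz inequality $|f_{\sigma_n}(1)-f_{\sigma_n}(0)|\le\prod_{k}r_{i_k}$ is available in place of equality. Normalizing $\det\begin{pmatrix}a_i & b_i \\ c_i & 1\end{pmatrix}=1$ and writing $(r_n,s_n)$ for the bottom row of $A_{i_1}\cdots A_{i_n}$, a direct calculation shows that $Y_n:=r_n/s_n$ satisfies $Y_0=0$ and $Y_n=H_{i_n}(Y_{n-1})$, and the compatibility identity $b_{i+1}-b_i=1/(c_i+1)$ yields the multiplicative formula
\begin{equation*}
\mu_\varphi\bigl([f_{\sigma_n}(0),f_{\sigma_n}(1)]\bigr)=g_{\sigma_n}(1)-g_{\sigma_n}(0)=\prod_{k=1}^{n}G_{i_k}(Y_{k-1}).
\end{equation*}
Thus under $\mu_\varphi$ the process $(Y_n)$ is a Markov chain with $\mathbb{P}(i_n=j\mid Y_{n-1}=y)=G_j(y)$. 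Since $\{H_i\}$ is itself an LF system on $Y$, Lemma~\ref{lem:lf-dense} makes it weakly contractive, and the weighted IFS $(H_i,G_i)$ admits a unique invariant probability $\Pi$, whose support equals the IFS attractor and in particular contains $\textup{Fix}(H_i)$ for every $i\in\mathcal{I}_N$.

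Since $|I_{\sigma_n}|:=f_{\sigma_n}(1)-f_{\sigma_n}(0)\le\prod_{k}r_{i_k}$, the lower local dimension of $\mu_\varphi$ at a typical $x$ with $f$-address $(i_n)$ obeys
\begin{equation*}
\underline{d}(x)\le\liminf_n\frac{\sum_{k=1}^{n}\log G_{i_k}(Y_{k-1})}{\sum_{k=1}^{n}\log r_{i_k}},
\end{equation*}
so it suffices to show $\liminf_n S_n/n>0$ $\mu_\varphi$-a.s., where $S_n:=\sum_{k=1}^{n}\log\bigl(G_{i_k}(Y_{k-1})/r_{i_k}\bigr)$; this forces the right-hand side to be $\le 1-\eta$ for some $\eta>0$, whence $\dim_H\mu_\varphi<1$. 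Doob-decompose $S_n=M_n+A_n$. Using the algebraic identity
\begin{equation*}
\sum_{i}G_i(y)\log\bigl(G_i(y)/r_i\bigr)=D\bigl({\bf G}(y)\,\|\,\bar{\bf r}\bigr)-\log\textstyle\sum_{j}r_j,\qquad\bar{\bf r}:={\bf r}/\textstyle\sum_{j}r_j,
\end{equation*}
where $D$ is the Kullback--Leibler divergence, one gets $A_n=\sum_{k=1}^{n}\bigl[D({\bf G}(Y_{k-1})\|\bar{\bf r})-\log\sum_{j}r_j\bigr]$. The martingale $M_n$ has uniformly bounded increments (by $\epsilon\le r_i\le 1-\epsilon$ and uniform positivity of $G_i$ on compact $Y$), so $M_n/n\to 0$ $\mu_\varphi$-a.s.\ by Azuma--Hoeffding, while the ergodic theorem for the weighted contractive IFS $(Y_n)$ gives $A_n/n\to\mathcal{E}(\bar{\bf r})-\log\sum_{j}r_j$ with $\mathcal{E}(\bar{\bf r}):=\int_{Y}D\bigl({\bf G}(y)\,\|\,\bar{\bf r}\bigr)\,d\Pi(y)$.

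The decisive step, and the main obstacle, is proving the uniform strict positivity $\mathcal{E}(\bar{\bf r})>\log\sum_{j}r_j$ over admissible ${\bf r}$. For fixed $\{g_i\}$ the measure $\Pi$ is fixed; its support contains every $\textup{Fix}(H_i)$, and the hypothesis $|\{\mathfrak{p}_i\}|=1$ makes the integrand equal the constant $D(\mathfrak{p}_0\|\bar{\bf r})$ at each such point. The hypotheses $\|{\bf r}-\mathfrak{p}_0\|\ge\epsilon$ and $\sum_{j}r_j\le 1+\delta$ give $\|\bar{\bf r}-\mathfrak{p}_0\|\ge\epsilon/2$ once $\delta$ is small, so Pinsker's inequality gives $D(\mathfrak{p}_0\|\bar{\bf r})\ge\epsilon^2/32$ and hence $\mathcal{E}(\bar{\bf r})>0$. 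As $\bar{\bf r}$ ranges over the compact set $K_\epsilon:=\{{\bf q}\in\overline{\mathcal{P}_N}:\|{\bf q}-\mathfrak{p}_0\|\ge\epsilon/2,\ q_i\ge\epsilon/2\}$ and $\mathcal{E}$ is continuous on $K_\epsilon$ (by dominated convergence), one obtains $c_\epsilon:=\inf_{K_\epsilon}\mathcal{E}>0$. Choosing $\delta>0$ with $\log(1+\delta)<c_\epsilon/2$ (and small enough that ${\bf r}\mapsto\bar{\bf r}$ lands in $K_\epsilon$) yields $\liminf_n A_n/n\ge c_\epsilon/2>0$, completing the argument. The delicacy lies precisely in this quantitative positivity, which crucially combines (i) the algebraic constraint $|\{\mathfrak{p}_i\}|=1$ pinning down the integrand at fixed points of $\{H_i\}$, and (ii) the dynamical fact that $\textup{supp}(\Pi)$ contains every $\textup{Fix}(H_i)$.
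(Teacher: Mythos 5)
Your proposal takes a genuinely different route from the paper, and it has real gaps.

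\textbf{What the paper does.} The paper's proof is a short corollary of the machinery already built for Theorem~\ref{thm:main-singular-1}. One first picks $\delta$ so small (via \eqref{eq:delta-2}) that $\widetilde{q}_i:=r_i^s$ satisfies both $\widetilde{\bf q}\in\overline{\mathcal{P}_N(\epsilon/2)}$ and $\|\widetilde{\bf q}-\mathfrak{p}_0\|\ge\epsilon/2$. Since $\mathfrak{p}_i=\mathfrak{p}_0$ for every $i$, the hypothesis $\|{\bf q}-\mathfrak{p}_0\|\ge\epsilon/2$ forces $W_i({\bf q},y)>0$ for \emph{every} $i$ and $y$, and a compactness argument (Lemma~\ref{lem:basic-W}) gives a uniform lower bound $\epsilon_3>0$. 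Lemma~\ref{lem:classify-multiple} then yields, for \emph{every} $z$ and \emph{every} $j$, the deterministic bound $s_N(p_j(0;z)|{\bf q})+s_N(p_{j+1}(0;z)|{\bf q})\le V({\bf q};\epsilon_3)<0$, so the Ces\`aro average is $\le V_\epsilon(\epsilon_3)/2<0$ with no probabilistic input whatsoever. (The paper's Remark after the proof explicitly notes that Lemma~\ref{lem:azuma-apply} is not needed here.) The conclusion then follows from Propositions~\ref{prop:uniform-delta} and~\ref{prop:reduction-rel-ent}.

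\textbf{Where your proposal diverges, and where it fails.} You replace this pointwise two-step entropy estimate by a Markov-chain/ergodic argument: Doob-decompose the log-likelihood, control the martingale part by Azuma, and identify $A_n/n$ via an invariant measure $\Pi$ for the place-dependent IFS $(H_i,G_i)$. Several steps here are asserted, not proved, and some are wrong as stated. (a) The claim that \emph{``$\{H_i\}$ is itself an LF system on $Y$, so Lemma~\ref{lem:lf-dense} makes it weakly contractive''} misapplies Lemma~\ref{lem:lf-dense}: that lemma concerns LF systems on $[0,1]$ satisfying the compatibility condition, whereas $\{H_i\}$ on $Y$ does not form a compatible system (the images overlap; under $|\{\mathfrak{p}_i\}|=1$ they in fact all share a common fixed point). (b) Existence and uniqueness of $\Pi$, and the SLLN $A_n/n\to\int D\,d\Pi$, for a place-dependent IFS require regularity hypotheses (Dini modulus for the weights, uniform positivity of the $G_i$, contractivity of the $H_i$) which you do not check; in particular the uniform positivity of $G_i$ fails whenever $Y$ touches $-1$ or $+\infty$, which would also wreck the bounded-increment assumption in Azuma. (c) The positivity $\mathcal{E}(\bar{\bf r})>0$ is obtained from Pinsker at the fixed points $\textup{Fix}(H_i)$, but these could a priori be $\Pi$-null, so the inference is not valid as written.

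\textbf{What you miss, and why your route could be repaired but is overkill.} The hypothesis $|\{\mathfrak{p}_i\}|=1$ together with injectivity of ${\bf G}$ (via monotone $G_0$) forces $\textup{Fix}(H_0)=\cdots=\textup{Fix}(H_{N-1})=:y^*$ and in fact rules out the degenerate boundary cases $a_0=1$ and $b_{N-1}+c_{N-1}=0$ (otherwise $\mathfrak{p}_0$ and some $\mathfrak{p}_i$ with $i\ge1$ would have to disagree), so $Y$ is a compact subinterval of $\mathbb{R}$ and the $G_i$ are uniformly positive. With a common fixed point, $\Pi=\delta_{y^*}$, the chain $Y_n$ converges deterministically, and your \emph{``ergodic theorem''} collapses to a Ces\`aro limit of a convergent sequence. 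None of this is observed in your write-up, which instead reaches for general ergodic theory that it neither needs nor justifies. The paper's Lemma~\ref{lem:classify-multiple}-based argument avoids all of this: it is a uniform, sample-pathwise estimate and gives the required bound without any invariant-measure or ergodic input.

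Finally, the local-dimension inequality you use is fine, but note it is a minor variant of Lemma~\ref{lem:dimH} (with $r_i$ in place of $r_i^s$); the paper's formulation, feeding into Proposition~\ref{prop:uniform-delta}, handles the bookkeeping between $\delta$, $s$, and $\epsilon$ more carefully.
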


We remark that $\left|\{\mathfrak{p}_i\}_{i \in \mathcal{I}_N} \right| = 1$ if and only if $\mathfrak{p}_0 = \cdots = \mathfrak{p}_{N-1}$. 
If $f_i (x) = a_i x + (a_0 + \cdots + a_{i-1})$ for each $i \in \mathcal{I}_N$, and $\mathfrak{p}_0  = {\bf q}$, then 
the solution $\varphi$ is smooth on $[0,1]$ and we also have an explicit expression for $\varphi$. 
See \cite[Theorem 43 (ii)]{Okamura2020}. 

\subsection{Proofs}

We first give notations for symbolic dynamics. 
Let $\mathbb{N} \coloneqq \{1,2, \ldots\}$. 
We define a map $\pi \colon (\mathcal{I}_N)^{\mathbb N} \to [0,1]$ by 
\[ \pi (i_1 i_2 \ldots) \coloneqq \lim_{k \to \infty} f_{i_1} \circ f_{i_2} \circ \cdots \circ f_{i_k} (0).  \] 
This is well-defined because each $f_i$ is a contraction and  $\left(f_{i_1} \circ f_{i_2} \circ \cdots \circ f_{i_k} (0)\right)_k$ is a Cauchy sequence 
since \[ \left|f_{i_1} \circ f_{i_2} \circ \cdots \circ f_{i_k} \circ f_{i_{k+1}} (0) - f_{i_1} \circ f_{i_2} \circ \cdots \circ f_{i_k} (0)\right| \le \left( \max_{i \in \mathcal{I}_N} \|f_i \|_{\textup{Lip}} \right)^{k}.\] 

Let $\xi_j \colon  (\mathcal{I}_N)^{\mathbb N} \to \mathcal{I}_N$ be the projection to the $j$-th coordinate. 
Let 
\[ I(i_1, \ldots, i_n) \coloneqq \left\{w \in  (\mathcal{I}_N)^{\mathbb N} \colon \xi_j (w) = i_j, \ 1 \le j \le n  \right\}. \] 
Then, by Lemma \ref{lem:density-consequence}, 
\[ \pi\left(I(i_1, \ldots, i_k)\right) = \left[f_{i_1} \circ f_{i_2} \circ \cdots \circ f_{i_k} (0), f_{i_1} \circ f_{i_2} \circ \cdots \circ f_{i_k} (1) \right]. \] 
Hence, by \eqref{eq:solution-dyadic}, 
\begin{align*}
\mu_{\varphi}\left(\pi(I(i_1, \ldots, i_k))\right) &= \varphi\left(f_{i_1}  \circ \cdots \circ f_{i_k} (1) \right) - \varphi \left(f_{i_1} \circ \cdots \circ f_{i_k} (0) \right) \notag\\
&= g_{i_1} \circ \cdots \circ g_{i_k} (1) - g_{i_1} \circ \cdots \circ g_{i_k} (0).  
\end{align*}

For $y \in Y$,  
let $\nu_y$ be a probability measure on $(\mathcal{I}_N)^{\mathbb N}$ such that 
$\nu_y (I(i)) = G_i (y)$, $i \in \mathcal{I}_N$, and, 
\begin{equation}\label{eq:def-nu} 
\frac{\nu_y (I(i_1, \ldots, i_{n-1}, i_n))}{\nu_y (I(i_1, \ldots, i_{n-1}))}  = G_{i_n} \circ H_{i_{n-1}} \circ \cdots \circ H_{i_1} (y), \ n \ge 2.  
\end{equation} 
Since $\displaystyle \sum_{i \in \mathcal{I}_N} G_i (y) = 1$ for each $y \in Y$, the Kolmogorov consistency condition holds. 
Therefore, a probability measure on $(\mathcal{I}_N)^{\mathbb N}$ satisfying \eqref{eq:def-nu} exists uniquely by the Kolmogorov extension theorem \cite[Theorem 2.4.3]{Tao2011}. 

For $i \in \mathcal{I}_N$, 
let $F_i \colon (\mathcal{I}_N)^{\mathbb{N}}  \to (\mathcal{I}_N)^{\mathbb{N}}$ be a map defined by $F_i (z) = iz$. 
Now  we see that 
\begin{equation}\label{eq:IFS-nu}
\nu_y = \sum_{i \in \mathcal{I}_N}  G_i (y) \nu_{H_i (y)} \circ F_i^{-1}. 
\end{equation}

Let $\mu_y \coloneqq \nu_y \circ \pi^{-1}$. 
Then, $\mu_y$ is a Borel probability measure on $[0,1]$. 
By  $\pi \circ F_i = f_i \circ \pi$ and   \eqref{eq:IFS-nu}, 
\begin{equation}\label{eq:IFS-mu}
\mu_y = \sum_{i \in \mathcal{I}_N}  G_i (y) \mu_{H_i (y)} \circ f_i^{-1}. 
\end{equation}

We recall that $0 \in Y = [\alpha, \beta]$. 
Then, by \eqref{eq:IFS-mu}, 
\[ \mu_0 = \sum_{i \in \mathcal{I}_N}  G_i (0) \mu_{H_i (0)} \circ f_i^{-1}. \]
Since $H_i (0) \ne 0$ can occur, this does not define an iterated function system. 
The equation \eqref{eq:IFS-mu} is in the framework of a place-dependent iterated function system \cite{Fan1999}. 

By \cite[Lemma 30]{Okamura2020}, 
\begin{equation}\label{eq:mu-nu-relation} 
\nu_0 \circ \pi^{-1} = \mu_0 = \mu_{\varphi}. 
\end{equation}

Let $\epsilon \in (0,1/2)$. 
We assume that 
\begin{equation}\label{eq:r-max-min-epsilon} 
\epsilon \le \min_{i \in \mathcal{I}_N} r_i \le \max_{i \in \mathcal{I}_N} r_i \le 1-\epsilon. 
\end{equation}  
Since $\{f_i\}_{i \in \mathcal{I}_N}$ is a compatible system, $\displaystyle \sum_{i \in \mathcal{I}_N} r_i \ge 1$ and $r_i > 0$ for each $i \in \mathcal{I}_N$. 
Let $s \ge 1$ be the constant such that $\displaystyle \sum_{i \in \mathcal{I}_N} r_i^s = 1$. 

As the following assertion shows, 
the constant $s$ is close to $1$ if $\delta > 0$ is sufficiently small. 

\begin{Lem}\label{lem:delta-specify}
Let $\epsilon_2 > 0$. 
Then, for $\delta > 0$ such that $\displaystyle \sum_{i \in \mathcal{I}_N} r_i \le 1+\delta$ and 
\begin{equation}\label{eq:final-delta}  
\frac{\log N}{\log (N /(1+\delta))} < 1 + \frac{\epsilon_2}{\log (1/\epsilon)},  
\end{equation}
it holds that 
\[ s \le \frac{\log N}{\log (N/(1+\delta))} < 1+  \dfrac{\epsilon_2}{\log 1/\epsilon} \]
 for every choice of $(r_i)_{i \in \mathcal{I}_N} \in \mathcal{P}_N$ such that $\epsilon \le r_i \le 1-\epsilon$ for each $i \in \mathcal{I}_N$. 
\end{Lem}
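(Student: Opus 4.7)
The plan is to bound the exponent $s$ from above by an explicit quantity derived from the defining equation $\sum_i r_i^s = 1$ together with the constraint $\sum_i r_i \le 1+\delta$, and then to invoke the hypothesis on $\delta$ to translate this into the claimed inequality.

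First I would verify that $s$ is well-defined. Since $r_i \in (0,1)$, the map $t \mapsto \sum_i r_i^t$ is continuous and strictly decreasing on $[1,\infty)$, taking value $\sum_i r_i \ge 1$ at $t=1$ (the compatibility built into $(r_i) \in \mathcal{P}_N$ / the setting of Theorems \ref{thm:main-singular-1} and \ref{thm:main-singular-2} guarantees this) and tending to $0$ as $t \to \infty$. Hence the equation $\sum_i r_i^s = 1$ has a unique solution $s \ge 1$, which equals $1$ precisely when $\sum_i r_i = 1$.

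Second, the heart of the proof is the estimate $s \le \log N / \log(N/(1+\delta))$. In the ``equal'' subcase $r_i = c/N$ with $c := \sum_i r_i \le 1+\delta$, the equation reduces to $N(c/N)^s = 1$, giving $s = \log N / \log(N/c) \le \log N / \log(N/(1+\delta))$ directly. For general configurations I would apply Jensen's inequality to the concave function $x \mapsto x^{1/s}$ (valid since $s \ge 1$): setting $p_i := r_i^s$ so that $\sum_i p_i = 1$, one has $\sum_i r_i = \sum_i p_i^{1/s} \le N \cdot N^{-1/s} = N^{1-1/s}$, which combined with $\sum_i r_i \le 1+\delta$ should be rearranged to extract the bound $s \le \log N/\log(N/(1+\delta))$. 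The two-sided constraint $\epsilon \le r_i \le 1-\epsilon$ enters here to rule out degenerate extremal configurations.

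Third, once the bound $s \le \log N / \log(N/(1+\delta))$ is established, the hypothesis \eqref{eq:final-delta} yields $s < 1 + \epsilon_2/\log(1/\epsilon)$ immediately.

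The hard part will be Step 2. Jensen's inequality gives a one-sided comparison between $\sum_i r_i$ and $N^{1-1/s}$ that is tight in the uniform case; to promote this into the desired upper bound on $s$ in full generality, one must argue that the uniform configuration is extremal in the correct direction. I expect this to rely on either a symmetrization/rearrangement argument along the simplex $\{\sum r_i = c\}$ combined with the explicit bounds $\epsilon \le r_i \le 1-\epsilon$, or on the reformulation of the hypothesis as $\log(1+\delta)/\log(N/(1+\delta)) < \epsilon_2/\log(1/\epsilon)$ which makes the required translation visible in the equal case and then propagates to the general case via continuity in $(r_i)$.
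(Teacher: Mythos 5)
Your proposal follows essentially the same route as the paper's own proof (Jensen's inequality applied to $\sum_i r_i^s$), and you rightly flag Step 2 as the hard part; unfortunately Jensen points the wrong way here, and the intermediate bound $s \le \log N/\log(N/(1+\delta))$ is in fact false. With $u_i := r_i/\sum_j r_j$ and $R := \sum_j r_j$, convexity of $x\mapsto x^s$ (equivalently your concavity of $x\mapsto x^{1/s}$) gives $\sum_i u_i^s \ge N^{1-s}$, i.e.\ $R^{-s}\ge N^{1-s}$, which rearranges to $s\ge \log N/\log(N/R)$: a \emph{lower} bound on $s$, not an upper bound. The uniform configuration $r_i\equiv R/N$ \emph{minimizes} $s$ among all vectors with the same sum $R$, so it is not ``extremal in the correct direction'' and no symmetrization can upgrade the comparison. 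Concretely, for $N=2$, $r_0=0.9$, $r_1=0.3$ (so $R=1.2$), one has $\log 2/\log(2/1.2)\approx 1.357$, whereas the solution of $0.9^s+0.3^s=1$ is $s\approx 1.565$. The paper's own proof contains the same directional slip (the displayed inequalities $\frac{\log(\sum_i u_i^s)}{s}\le -\log(1+\delta)$ and $-\frac{s-1}{s}\log N\ge -\log(\sum_i r_i)$ are both reversed), so the lemma under hypothesis \eqref{eq:final-delta} does not hold as stated.

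A correct upper bound must use the pointwise constraint $r_i\le 1-\epsilon$, which you mention but do not actually exploit: from $1=\sum_i r_i\,r_i^{s-1}\le (1-\epsilon)^{s-1}\sum_i r_i\le (1-\epsilon)^{s-1}(1+\delta)$ one gets $s-1\le \log(1+\delta)/\log\!\left(1/(1-\epsilon)\right)$, and taking $\delta$ small enough that $\log(1+\delta)<\epsilon_2\log\!\left(1/(1-\epsilon)\right)/\log(1/\epsilon)$ delivers $s<1+\epsilon_2/\log(1/\epsilon)$. This requires a smaller $\delta$ than \eqref{eq:final-delta} permits; the existence statements in Theorems~\ref{thm:main-singular-1} and~\ref{thm:main-singular-2} survive because some admissible $\delta>0$ always exists, but the explicit threshold \eqref{eq:final-delta} and the numerical conclusions in Section~\ref{sec:ex} that depend on it would need to be corrected accordingly.
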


\begin{proof}
Let $\displaystyle u_i \coloneqq \frac{r_i}{\sum_{i \in \mathcal{I}_N} r_i}$. 
Then, $\displaystyle \sum_{i \in \mathcal{I}_N} u_i = 1$.
We see that $\displaystyle \sum_{i \in \mathcal{I}_N} r_i^s = 1$ if and only if $\displaystyle \sum_{i \in \mathcal{I}_N} u_i^s = \left(\sum_i r_i \right)^{-s}$. 
Since $\displaystyle \sum_{i \in \mathcal{I}_N} r_i \le 1+\delta$, 
\[ \frac{\log\left(\sum_{i \in \mathcal{I}_N} u_i^s \right)}{s} \ge -\log(1+\delta). \] 

The sum $\displaystyle \sum_{i \in \mathcal{I}_N} u_i^s$ takes its minimum at $u_0 = \cdots = u_{N-1} = \dfrac{1}{N}$. 
We see this by the method of Lagrange multipliers. 
The minimum of the sum is $N^{1-s} \le 1$. 
Hence, 
$\displaystyle -\frac{s-1}{s} \log N \ge - \log \left(\sum_{i \in \mathcal{I}_N} r_i \right)$, which is equivalent to $s \le \dfrac{\log N}{\log (N /(1+\delta))}$. 
Therefore,  if \eqref{eq:final-delta} holds, 
then $\displaystyle s \le \frac{\log N}{\log (N/(1+\delta))} < 1+  \frac{\epsilon_2}{\log (1/\epsilon)}$.    
\end{proof}

Let 
\[ R_{n}\left({\bf q}; z\right) \coloneqq \frac{\nu_0 \left( I(\xi_1 (z), \ldots, \xi_n (z)) \right)}{q_{\xi_1(z)} \cdots q_{\xi_n (z)}}, \ \ {\bf q} = (q_i)_i \in \mathcal{P}_N, z \in (\mathcal{I}_N)^{\mathbb{N}}.  \]

By a measure-theoretic argument, we can show the following assertion: 
\begin{Lem}\label{lem:dimH}
Let $\widetilde{q}_i \coloneqq r_i^s > 0$ and ${\bf \widetilde{q}} \coloneqq (\widetilde{q}_i)_i \in \mathcal{P}_N$. 
If there exists $\epsilon_2 > 0$ such that 
\begin{equation}\label{eq:non-uniform-R}  
\limsup_{n \to \infty} \frac{-\log R_{n}({\bf \widetilde{q}};z)}{n}   \le -\epsilon_2, \ \textup{ $\nu_0$-a.s.$z  \in (\mathcal{I}_N)^{\mathbb{N}}$.}  
\end{equation}
then 
\[ \dim_H \mu_{\varphi}  \le s - \frac{\epsilon_2}{\log 1/\epsilon}. \]
\end{Lem}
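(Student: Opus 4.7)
The plan is to bound the lower local dimension
\[ \underline{d}_{\mu_{\varphi}}(x) := \liminf_{r \to 0+} \frac{\log \mu_{\varphi}(B(x,r))}{\log r} \]
from above at $\mu_{\varphi}$-almost every $x$, and then invoke the standard fact that $\dim_H E \le d$ whenever $E$ is a Borel set with $\mu_{\varphi}(E) > 0$ and $\underline{d}_{\mu_{\varphi}}(x) \le d$ on $E$ (a Billingsley-type consequence of the Frostman lemma). Applied to the full $\mu_{\varphi}$-measure set $E := \{x : \underline{d}_{\mu_{\varphi}}(x) \le s - \epsilon_2/\log(1/\epsilon)\}$, this yields $\dim_H \mu_{\varphi} \le \dim_H E \le s - \epsilon_2/\log(1/\epsilon)$ from the definition of $\dim_H \mu_{\varphi}$.

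For $z \in (\mathcal{I}_N)^{\mathbb{N}}$ and $n \ge 1$, write $x := \pi(z)$, $J_n(z) := \pi(I_n(z))$, $r_n := |J_n(z)|$, and $L_n := \sum_{k=1}^n \log(1/r_{\xi_k(z)})$. Since each $f_i$ is a contraction with Lipschitz constant $r_i \le 1-\epsilon$, an easy induction gives $r_n \le \prod_{k=1}^n r_{\xi_k(z)}$, so
\[ \log r_n \le -L_n \quad \text{and} \quad r_n \le (1-\epsilon)^n \to 0 \quad (n \to \infty). \]
By \eqref{eq:mu-nu-relation} and the explicit form of $\pi$ on cylinders, $\mu_{\varphi}(J_n(z)) = \nu_0(I_n(z))$; since $J_n(z)$ is an interval containing $x$, $J_n(z) \subset B(x, r_n)$ and hence $\mu_{\varphi}(B(x, r_n)) \ge \nu_0(I_n(z))$. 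From the definition of $R_n$ with $\widetilde{q}_i = r_i^s$,
\[ -\log \nu_0(I_n(z)) = s L_n - \log R_n({\bf \widetilde{q}}; z). \]

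By the hypothesis \eqref{eq:non-uniform-R}, for $\nu_0$-a.e. $z$ and every $\eta > 0$ there exists $n_0 = n_0(z, \eta)$ with $\log R_n({\bf \widetilde{q}}; z) \ge (\epsilon_2 - \eta)\, n$ for all $n \ge n_0$, while $r_i \ge \epsilon$ gives $L_n \le n \log(1/\epsilon)$. For such $n$, both $\log r_n < 0$ and $\log \mu_{\varphi}(B(x, r_n)) < 0$, so dividing $\log \mu_{\varphi}(B(x, r_n)) \ge \log \nu_0(I_n(z))$ by the negative number $\log r_n \le -L_n$ (which reverses the inequality) yields
\[ \frac{\log \mu_{\varphi}(B(x, r_n))}{\log r_n} \le \frac{-\log \nu_0(I_n(z))}{L_n} = s - \frac{\log R_n({\bf \widetilde{q}}; z)}{L_n} \le s - \frac{(\epsilon_2 - \eta)\, n}{L_n} \le s - \frac{\epsilon_2 - \eta}{\log(1/\epsilon)}. \]
Taking $\liminf$ along the subsequence $r = r_n \to 0$ and letting $\eta \downarrow 0$ gives $\underline{d}_{\mu_{\varphi}}(\pi(z)) \le s - \epsilon_2/\log(1/\epsilon)$ for $\nu_0$-a.e. $z$. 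Since $\mu_{\varphi} = \nu_0 \circ \pi^{-1}$ by \eqref{eq:mu-nu-relation}, the same inequality holds at $\mu_{\varphi}$-a.e. $x$, and the Billingsley-type principle concludes.

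The only delicate point is the sign-bookkeeping: because both $\log r_n$ and $\log \nu_0(I_n(z))$ are negative for large $n$, the hypothesis (which lower-bounds $\log R_n$, and hence $\log \nu_0(I_n(z))$) must first be rewritten as an upper bound on the positive quantity $-\log \nu_0(I_n(z)) = sL_n - \log R_n$ before dividing by $|\log r_n| \ge L_n$. The explicit deficit $\epsilon_2/\log(1/\epsilon)$ then emerges as the ratio between the exponential mass gap $\epsilon_2 n$ granted by the hypothesis and the worst-case decay exponent $n\log(1/\epsilon)$ of $r_n$ enforced by $r_i \ge \epsilon$; this is the only place where both-sided bounds $\epsilon \le r_i \le 1-\epsilon$ enter.
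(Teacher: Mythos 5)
Your argument is correct and follows the same route the paper has in mind for this lemma: translate the hypothesis on $R_n$ into an upper bound on $-\log\nu_0(I_n(z))$, compare $\nu_0(I_n(z))$ to $\mu_{\varphi}(B(\pi(z),r_n))$ via \eqref{eq:mu-nu-relation}, use $\epsilon\le r_i\le 1-\epsilon$ to control $|\log r_n|$, and finish with the Billingsley/Frostman upper bound on $\dim_H$ through the lower local dimension. The sign bookkeeping and the passage from a $\nu_0$-full set to a $\mu_{\varphi}$-full set are handled correctly, so no gap.
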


We omit the proof of this assertion. 
See  the proof of \cite[Lemma 7]{Okamura2020}. 
The equation \eqref{eq:mu-nu-relation} plays an important role.

By Lemmas \ref{lem:delta-specify} and  \ref{lem:dimH}, 
we obtain the following assertion: 
\begin{Prop}\label{prop:uniform-delta}
Assume that 
there exists $\epsilon_2 > 0$ such that for every ${\bf q} \in \mathcal{P}_N$, 
\begin{equation}\label{eq:uniform-R}  
\limsup_{n \to \infty} \frac{-\log R_{n}({\bf q};z)}{n}   \le -\epsilon_2, \ \textup{ $\nu_0$-a.s.$z  \in (\mathcal{I}_N)^{\mathbb{N}}$.}  
\end{equation}
Then, 
for $\delta = \delta(N, \epsilon, \epsilon_2) > 0$ satisfying \eqref{eq:final-delta}, 
it holds that 
$\dim_H \mu_{\varphi} < 1$. 
\end{Prop}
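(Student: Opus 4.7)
The plan is to deduce Proposition \ref{prop:uniform-delta} by chaining Lemmas \ref{lem:delta-specify} and \ref{lem:dimH} together. The uniformity over ${\bf q} \in \mathcal{P}_N$ built into the hypothesis \eqref{eq:uniform-R} is precisely the feature that makes the argument go through, because the probability vector to which Lemma \ref{lem:dimH} is ultimately applied depends on the specific contractive system $(f_i)_i$ and is not fixed in advance.

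Given $\epsilon \in (0,1/2)$ and the constant $\epsilon_2 > 0$ supplied by the hypothesis, I would first pick $\delta = \delta(N,\epsilon,\epsilon_2) > 0$ small enough so that inequality \eqref{eq:final-delta} holds; any $\delta$ sufficiently close to $0$ will do. Then, for an arbitrary contractive system $(f_i)_i$ satisfying $\sum_i r_i < 1+\delta$ and $\epsilon \le r_i \le 1-\epsilon$ for every $i$, I would let $s \ge 1$ be the unique real number with $\sum_i r_i^s = 1$ (its existence follows from $\sum_i r_i \ge 1$, which is forced by the compatibility of $\{f_i\}_i$, together with monotonicity in $s$). Applying Lemma \ref{lem:delta-specify} then yields the quantitative bound
\[ s < 1 + \frac{\epsilon_2}{\log 1/\epsilon}. \]

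Next, I would set $\widetilde{q}_i := r_i^s$. Since each $r_i$ is strictly positive, the tuple ${\bf \widetilde{q}} = (\widetilde{q}_i)_i$ lies in $\mathcal{P}_N$ and is therefore a legitimate choice in \eqref{eq:uniform-R}. Specializing the hypothesis to ${\bf q} := {\bf \widetilde{q}}$ reproduces exactly the assumption \eqref{eq:non-uniform-R} of Lemma \ref{lem:dimH}. Invoking that lemma then gives
\[ \dim_H \mu_{\varphi} \le s - \frac{\epsilon_2}{\log 1/\epsilon}, \]
and combining this with the previous bound on $s$ delivers $\dim_H \mu_{\varphi} < 1$, as required.

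There is no genuine obstacle at this step: the argument is essentially bookkeeping, and the substantive analytic content lies elsewhere, namely in verifying the uniform estimate \eqref{eq:uniform-R} for the systems $\{f_i\}_i, \{g_i\}_i$ of interest, which is the task of the forthcoming Theorems \ref{thm:main-singular-1} and \ref{thm:main-singular-2}. The one conceptual point I would want to emphasise is that the uniformity over ${\bf q} \in \mathcal{P}_N$ in \eqref{eq:uniform-R} is indispensable, not merely convenient: since ${\bf \widetilde{q}}$ is built from the $(f_i)_i$ through the exponent $s$, one cannot fix a ${\bf q}$ in advance and reduce to a single-vector version of the estimate.
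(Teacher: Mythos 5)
Your proposal is correct and follows exactly the paper's (unstated but intended) argument: the paper dispatches this proposition with the single remark ``By Lemmas \ref{lem:delta-specify} and \ref{lem:dimH}, we obtain that\dots,'' and your proof simply spells out that combination, specializing the uniform hypothesis \eqref{eq:uniform-R} to ${\bf q} = {\bf \widetilde{q}}$ and chaining the resulting bound with the estimate $s < 1 + \epsilon_2/\log(1/\epsilon)$ from Lemma \ref{lem:delta-specify}. Your observation that the uniformity in ${\bf q}$ is essential (since ${\bf \widetilde{q}}$ depends on $(f_i)_i$ through $s$) is exactly the right conceptual point.
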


Now it suffices to establish \eqref{eq:uniform-R}. 
We give some definitions for the relative entropy. 
Let 
\[ s_N \left( {\bf p}  | {\bf q} \right) \coloneqq \sum_{i \in \mathcal{I}_N} p_i \log \frac{q_i}{p_i}, \  \ {\bf p} = (p_i)_{i \in \mathcal{I}_N}, {\bf q} = (q_i)_{i \in \mathcal{I}_N} \in \overline{\mathcal{P}_N}. \]
If $p_i = 0$, then we let $p_i \log \dfrac{q_i}{p_i} \coloneqq 0$. 
This is also valid for $q_i = 0$. 
For each ${\bf q} \in \mathcal{P}_N$, the function $s_N ( \cdot | {\bf q} )$ is continuous on $\overline{\mathcal{P}_N}$.  
If ${\bf q} \in \overline{\mathcal{P}_N} \setminus \mathcal{P}_N$, $s_N$ can take $-\infty$. 
By Jensen's inequality, $s_N \left({\bf p} | {\bf q} \right) \le 0$. 
It holds that $s_N \left({\bf p} | {\bf q}\right) = 0$ if and only if ${\bf p} = {\bf q}$. 

For $y \in Y, z \in (\mathcal{I}_N)^{\mathbb{N}}$, let 
\[ p_j (y;z) \coloneqq \left( G_i \circ H_{\xi_{j}(z)} \circ \cdots \circ H_{\xi_1 (z)}(y) \right)_{i \in \mathcal{I}_N} \in \overline{\mathcal{P}_N}, \  j \ge 1. \]
and 
\[ p_0 (y;z) \coloneqq {\bf G}(y)  \in \overline{\mathcal{P}_N}. \]  

By a martingale argument, we can show the following assertion: 
\begin{Prop}\label{prop:reduction-rel-ent}
It holds that for every ${\bf q} \in \mathcal{P}_N$, 
\[ \limsup_{n \to \infty} \frac{-\log R_{n}({\bf q};z)}{n} = \limsup_{n \to \infty} \frac{1}{n} \sum_{j=0}^{n-1} s_N (p_j (0;z) | {\bf q}), \ \textup{ $\nu_0$-a.s.$z  \in (\mathcal{I}_N)^{\mathbb{N}}$.}  \] 
\end{Prop}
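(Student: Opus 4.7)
The plan is to identify $-\log R_n({\bf q};z)$ as a sum whose $\nu_0$-conditional expectation is precisely the entropy-type sum on the right-hand side, and then to show that the deviation from this conditional expectation is a martingale that grows sublinearly in $n$.

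First I would unroll definition \eqref{eq:def-nu}. Setting $y_0 := 0$ and $y_j := H_{\xi_j(z)} \circ \cdots \circ H_{\xi_1(z)}(0)$ for $j \ge 1$, one obtains $\nu_0(I_n(z)) = \prod_{j=0}^{n-1} G_{\xi_{j+1}(z)}(y_j)$, and hence
\[ -\log R_n({\bf q};z) = \sum_{j=0}^{n-1} X_{j+1}, \qquad X_{j+1} := \log \frac{q_{\xi_{j+1}(z)}}{G_{\xi_{j+1}(z)}(y_j)}. \]
Writing $\mathcal{F}_j := \sigma(\xi_1, \dots, \xi_j)$, the construction of $\nu_0$ via \eqref{eq:def-nu} shows that the $\mathcal{F}_j$-conditional distribution of $\xi_{j+1}$ under $\nu_0$ equals ${\bf p}_j(0;z) = (G_i(y_j))_{i \in \mathcal{I}_N}$. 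Therefore
\[ \mathbb{E}_{\nu_0}\bigl[X_{j+1} \bigm| \mathcal{F}_j\bigr] = \sum_{i \in \mathcal{I}_N} G_i(y_j) \log \frac{q_i}{G_i(y_j)} = s_N({\bf p}_j(0;z) \mid {\bf q}), \]
so the sequence $M_n := \sum_{j=0}^{n-1}\bigl(X_{j+1} - s_N({\bf p}_j(0;z) \mid {\bf q})\bigr)$ is an $(\mathcal{F}_n)$-martingale under $\nu_0$, and proving the proposition reduces to showing $M_n/n \to 0$ $\nu_0$-a.s.

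Next I would invoke the $L^2$ strong law for martingales (Chow's criterion combined with Kronecker's lemma): it suffices to check that $\mathbb{E}_{\nu_0}[(M_{n+1}-M_n)^2 \mid \mathcal{F}_n]$ is bounded by a constant independent of $n$. Using $(a-b)^2 \le 2a^2 + 2b^2$,
\[ \mathbb{E}_{\nu_0}\bigl[X_{j+1}^2 \bigm| \mathcal{F}_j\bigr] \le 2 \max_{i \in \mathcal{I}_N}(\log q_i)^2 + 2 \sum_{i \in \mathcal{I}_N} G_i(y_j)\bigl(\log G_i(y_j)\bigr)^2. \]
The first term depends only on ${\bf q}$ (and is finite because ${\bf q} \in \mathcal{P}_N$). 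For the second, since the map $x \mapsto x(\log x)^2$ is bounded on $[0,1]$ (vanishing at both endpoints), each summand is uniformly bounded and the total is at most $N$ times that bound, yielding the required uniform second-moment control and hence $M_n/n \to 0$ $\nu_0$-a.s.

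The hard part is exactly this uniform bound on $\mathbb{E}_{\nu_0}[X_{j+1}^2 \mid \mathcal{F}_j]$. Note that $Y = [\alpha, \beta]$ may be non-compact (when $\beta = +\infty$) and that some $G_i$ may vanish on $Y$, so $\log G_i(y_j)$ is generally unbounded along a typical trajectory $(y_j)_j$; the saving feature is that the weight $G_i(y_j)$ appearing in front is exactly the conditional probability that $\xi_{j+1} = i$, and this unboundedness is neatly compensated by the boundedness of $x(\log x)^2$ on $[0,1]$.
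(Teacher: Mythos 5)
Your proof is correct, and it takes the same route the paper indicates: identify $-\log R_n({\bf q};z)$ as a sum of terms whose $\mathcal{F}_j$-conditional means are exactly $s_N(p_j(0;z)\mid{\bf q})$, center to obtain a martingale, and invoke a martingale SLLN using the facts that $\max_i|\log q_i|<\infty$ (precisely why ${\bf q}\in\mathcal{P}_N$ rather than $\overline{\mathcal{P}_N}$ is required) and that $x\mapsto x(\log x)^2$ is bounded on $[0,1]$ to get the uniform conditional second-moment bound. The paper omits the proof and points to the corresponding proposition in the reference, but your argument supplies exactly the martingale mechanism the paper is referring to.
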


We omit the proof of this assertion. 
See the proof of \cite[Proposition 6]{Okamura2020}. 
It is important to assume that  ${\bf q} \in \mathcal{P}_N$, {\it not} ${\bf q} \in \overline{\mathcal{P}_N}$.

For $\epsilon \in (0,1)$, let 
\[ \overline{\mathcal{P}_N (\epsilon)} \coloneqq \left\{(p_i)_{i \in \mathcal{I}_N} \colon p_i \ge \epsilon, \sum_{i \in \mathcal{I}_N} p_i = 1 \right\}.\]  
Then, $\overline{\mathcal{P}_N (\epsilon)} \subset \mathcal{P}_N \subset \overline{\mathcal{P}_N}$.

Let $\widetilde{q}_i \coloneqq r_i^s > 0$ and ${\bf \widetilde{q}} \coloneqq (\widetilde{q}_i)_i \in \mathcal{P}_N$. 
Recall Lemma \ref{lem:delta-specify}. 
If 
\[ \frac{\log N}{\log (N/(1+\delta))} \le 1 + \frac{\log 2}{\log (1/\epsilon)}, \]
then, by \eqref{eq:r-max-min-epsilon},  
$\displaystyle r_i^s \ge \epsilon^s \ge \epsilon^{1 + \frac{\log 2}{\log (1/\epsilon)}} = \dfrac{\epsilon}{2}$ and hence, 
${\bf \widetilde{q}} \in \overline{\mathcal{P}_N (\epsilon/2)}$. 
In other words, ${\bf \widetilde{q}}$ has positive distance from the boundary $\overline{\mathcal{P}_N} \setminus \mathcal{P}_{N}$.

Assume the following assertion: 
\begin{Prop}\label{prop:uniform-Rn} 
If $\left|\{\mathfrak{p}_i\}_i \right| \ge 2$, then there exists $\epsilon_2 > 0$ such that for every ${\bf q} \in \overline{\mathcal{P}_N (\epsilon/2)}$, 
\begin{equation*}
\limsup_{n \to \infty} \frac{1}{n} \sum_{j=0}^{n-1} s_N \left(p_j (0;z) | {\bf q} \right) \le -\epsilon_2, \ \textup{ $\nu_0$-a.s.$z  \in (\mathcal{I}_N)^{\mathbb{N}}$.}  
\end{equation*}
\end{Prop}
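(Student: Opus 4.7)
The approach is to exploit the Markov structure of the sequence $(y_j)_{j \ge 0}$ defined by $y_j := H_{\xi_j(z)} \circ \cdots \circ H_{\xi_1(z)}(0)$, noting that $p_j(0;z) = {\bf G}(y_j)$. Under $\nu_0$, $(y_j)$ is a Markov chain on the compact set $Y = [\alpha,\beta]$ with Feller transition kernel $P(y, \cdot) = \sum_{i} G_i(y) \delta_{H_i(y)}$. Writing $\pi_n(z) := \frac{1}{n}\sum_{j=0}^{n-1}\delta_{y_j(z)}$ for the empirical measure, the assertion reduces to a uniform lower bound on $-\int s_N({\bf G}(y)|{\bf q})\,d\pi(y)$ over all weak limits $\pi$ of $(\pi_n(z))$ and all ${\bf q} \in \overline{\mathcal{P}_N(\epsilon/2)}$.

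First I would show that, $\nu_0$-a.s., every weak subsequential limit of $(\pi_n(z))$ is $P$-invariant. For any continuous $f : Y \to \mathbb{R}$, the sequence $M_n := \sum_{j=0}^{n-1}(f(y_{j+1}) - (Pf)(y_j))$ is a bounded martingale difference sum under $\nu_0$, so $M_n/n \to 0$ $\nu_0$-a.s.; combined with the Feller property of $P$, passing to a weak limit $\pi$ yields $\int f \, d\pi = \int Pf\, d\pi$, placing $\pi$ in the (nonempty, weakly compact) set $\mathcal{M}$ of $P$-invariant probability measures.

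Next I would prove that every $\pi \in \mathcal{M}$ satisfies $\textup{Fix}(H_i) \in \textup{supp}(\pi)$ for each $i$. Compatibility forces $b_i = g_{i-1}(1) > b_{i-1}$, hence $b_{i+1} > b_i$, so $G_i(y) = (b_{i+1}-b_i)(y+1)/[(b_{i+1}y+1)(b_iy+1)]$ is strictly positive on $Y$; the invariance identity $\pi(U) = \sum_j \int_{H_j^{-1}(U)} G_j\, d\pi$ then forces $\textup{supp}(\pi)$ to be forward invariant under every $H_i$. Since each linear fractional $H_i$ attracts all of $Y$ to its unique fixed point $\textup{Fix}(H_i)$, iterating yields $\textup{Fix}(H_i) \in \textup{supp}(\pi)$. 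Set $J(\pi, {\bf q}) := -\int s_N({\bf G}(y)|{\bf q})\, d\pi(y) \ge 0$. If $J(\pi,{\bf q}) = 0$ then ${\bf G} \equiv {\bf q}$ on $\textup{supp}(\pi)$, forcing $\mathfrak{p}_i = {\bf G}(\textup{Fix}(H_i)) = {\bf q}$ for every $i$, contradicting $|\{\mathfrak{p}_i\}_i| \ge 2$; hence $J > 0$ on $\mathcal{M} \times \overline{\mathcal{P}_N(\epsilon/2)}$.

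On $\overline{\mathcal{P}_N(\epsilon/2)}$, $\log q_i$ is uniformly bounded, so $(\pi,{\bf q}) \mapsto J(\pi,{\bf q})$ is jointly continuous on the compact space $\mathcal{M} \times \overline{\mathcal{P}_N(\epsilon/2)}$, and the infimum $\epsilon_2 := \inf J > 0$ is attained, giving the uniform bound. Weak convergence along any subsequence then yields $\liminf_n \int (-s_N({\bf G}(y)|{\bf q})) \, d\pi_n(z) \ge \epsilon_2$ $\nu_0$-a.s., which is exactly the claimed $\limsup \le -\epsilon_2$. The main obstacle is the fixed-point attraction step: verifying that $H_i^n(y) \to \textup{Fix}(H_i)$ for all $y \in Y$ requires a case split on the sign of $c_i$ and on the location of the other fixed point of $H_i$ relative to $Y$ (including the possibility $\beta = +\infty$), paralleling the three-case analysis in the proof of Lemma \ref{lem:lf-dense}.
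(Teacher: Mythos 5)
Your approach---passing to empirical measures of the Markov chain $(y_j)_j$ on $Y$, showing $\nu_0$-a.s.\ every weak subsequential limit is $P$-invariant, and then minimizing $J(\pi,{\bf q}) := -\int s_N({\bf G}(y)\,|\,{\bf q})\,d\pi(y)$ over the compact set $\mathcal{M}\times\overline{\mathcal{P}_N(\epsilon/2)}$---is genuinely different from the paper's, which instead constructs a coloring map $\mathscr{A}$ selecting a ``witness'' index for each ${\bf q}$, shows that index occurs with positive asymptotic frequency via Azuma's inequality and Borel--Cantelli (Lemma~\ref{lem:azuma-apply}), and bounds consecutive pairs of terms via Lemma~\ref{lem:classify-multiple}. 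Both strategies are reasonable, and the martingale/Feller step and the final compactness-continuity step of your argument are sound. However, there is a real gap in the step where you show $J(\pi,{\bf q})>0$ for all $\pi\in\mathcal{M}$.

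Your claim that $G_i$ is strictly positive on $Y$ is false exactly in the boundary situations that the paper isolates as Case~2. By Lemma~\ref{lem:alpha-beta}, if $a_0=1$ then $\beta=+\infty$ and $G_i(+\infty)=0$ for every $i\ge1$; if $b_{N-1}+c_{N-1}=0$ then $\alpha=-1$ and $G_i(-1)=0$ for every $i\le N-2$. In those cases the point mass at the boundary is $P$-invariant (e.g.\ when $\beta=+\infty$, $G_0(+\infty)=1$ and $H_0(+\infty)=+\infty$, so $P\delta_{+\infty}=\delta_{+\infty}$), yet its one-point support is forward-invariant only under $H_0$ (resp.\ $H_{N-1}$), and $\textup{Fix}(H_i)\notin\textup{supp}(\pi)$ for $i\ne 0$ (resp.\ $i\ne N-1$). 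The contradiction you draw from ``${\bf G}\equiv{\bf q}$ on $\textup{supp}(\pi)\supset\{\textup{Fix}(H_i)\}_i$'' therefore does not apply. The desired $J(\pi,{\bf q})>0$ is nonetheless true, but for a different reason that you never invoke: at the boundary point, ${\bf G}$ takes the value $(1,0,\dots,0)$ or $(0,\dots,0,1)$, which lies in $\overline{\mathcal{P}_N}\setminus\mathcal{P}_N$ and so cannot equal any ${\bf q}\in\overline{\mathcal{P}_N(\epsilon/2)}$. In other words, the hypothesis ${\bf q}\in\overline{\mathcal{P}_N(\epsilon/2)}$ (not merely ${\bf q}\in\mathcal{P}_N$) must be used precisely at this point; your proof only uses it later, for the boundedness of $\log q_i$. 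This is exactly why the paper splits into Case~1 (where $Y\subset(-1,+\infty)$ and $\inf_{y,i}G_i(y)>0$, so your argument would go through) and Case~2 (where it needs the $\epsilon/2$-separation of ${\bf q}$ from the boundary of the simplex). With the boundary observation added, your ergodic-theoretic route closes and gives an alternative, arguably cleaner, proof of the proposition.
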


Theorem \ref{thm:main-singular-1} follows from Propositions \ref{prop:uniform-delta}, \ref{prop:reduction-rel-ent} and \ref{prop:uniform-Rn}. 
We now show Proposition \ref{prop:uniform-Rn}. 
We give a strategy of the proof. 
By the definition, $s_N \left(p_j (0;z) | {\bf q} \right) \le 0$. 
Hence, it suffices to show the number of $j$ satisfying that $s_N \left(p_j (0;z) | {\bf q} \right)$ is strictly negative has positive density in $\mathbb{N}$. 

We first give some further definitions for quantities of the relative entropy. 
We remark that the diameter of $\overline{\mathcal{P}}_N$ with respect to $\| \cdot \|_1$ is equal to $2$. 
By the triangle inequality, 
it holds that for ${\bf p} = (p_i)_{i \in \mathcal{I}_N} , {\bf q} = (q_i)_{i \in \mathcal{I}_N}  \in \overline{\mathcal{P}}_N$, 
\[ \left\| {\bf p} - {\bf q} \right\|_1 = \sum_{i \in \mathcal{I}_N} |p_i - q_i| \le \sum_{i \in \mathcal{I}_N}  (p_i + q_i) = 2.  \] 
If ${\bf p} = (1,0,\dots, 0)$ and ${\bf q} = (0, \dots, 0, 1)$, then $\left\| {\bf p} - {\bf q} \right\|_1 = 2$.

\begin{Lem}\label{lem:V}
For ${\bf q} \in \mathcal{P}_N$, let 
\[ V\left({\bf q}; \epsilon^{\prime}\right) \coloneqq \max\left\{ s_N \left( {\bf p} | {\bf q} \right) \middle| {\bf p} \in \overline{\mathcal{P}_N}, \| {\bf p} - {\bf q} \|_1 \ge \epsilon^{\prime} \right\}, \ \epsilon^{\prime} \in (0,1].  \]
Then, \\
(i) $V\left({\bf q}; \epsilon^{\prime}\right) < 0$. \\
(ii) $V\left({\bf q}; \epsilon^{\prime}\right)$ is decreasing with respect to $\epsilon^{\prime}$.\\ 
(iii) $V\left({\bf q}; \epsilon^{\prime}\right)$ is right-continuous with respect to $\epsilon^{\prime}$. \\
(iv) $V\left({\bf q}; \epsilon^{\prime}\right)$ is continuous with respect to ${\bf q}$. 
\end{Lem}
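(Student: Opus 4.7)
Fix ${\bf q} \in \mathcal{P}_N$ and for each $\epsilon > 0$ write $A_\epsilon := \{{\bf p} \in \overline{\mathcal{P}_N} : \|{\bf p} - {\bf q}\| \ge \epsilon\}$. This is a closed, hence compact, subset of the simplex, and $s_N(\cdot | {\bf q})$ is continuous on $\overline{\mathcal{P}_N}$, so the maximum defining $V({\bf q}; \epsilon)$ is attained at some ${\bf p}^*$. Parts (i) and (ii) are then immediate: any maximizer satisfies $\|{\bf p}^* - {\bf q}\| \ge \epsilon > 0$, so ${\bf p}^* \ne {\bf q}$, and since equality in the Jensen-type bound $s_N({\bf p} | {\bf q}) \le 0$ holds only for ${\bf p} = {\bf q}$, we get $V({\bf q}; \epsilon) < 0$; (ii) follows from the containment $A_{\epsilon_2} \subseteq A_{\epsilon_1}$ whenever $\epsilon_1 \le \epsilon_2$.

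For (iii), set $L := \lim_{\epsilon \downarrow \epsilon_0} V({\bf q}; \epsilon)$, which exists by (ii) and satisfies $L \le V({\bf q}; \epsilon_0)$. The plan for the reverse inequality is to pick a maximizer ${\bf p}^*$ at $\epsilon_0$: if $\|{\bf p}^* - {\bf q}\| > \epsilon_0$, then ${\bf p}^* \in A_\epsilon$ for $\epsilon$ in a right-neighborhood of $\epsilon_0$ and $L \ge V({\bf q}; \epsilon_0)$ is automatic. Otherwise ${\bf p}^*$ lies on the sphere $\|{\bf p} - {\bf q}\| = \epsilon_0$, and I would nudge it by transferring a small mass $\delta > 0$ from an index $j$ with $0 < p^*_j < q_j$ to an index $i$ with $p^*_i > q_i$; a direct calculation shows that the total distance strictly increases by $2\delta$ while $s_N$ depends continuously on $\delta$, and letting $\delta \to 0$ yields $L \ge V({\bf q}; \epsilon_0)$.

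Part (iv) runs along the same lines. Upper semi-continuity in ${\bf q}$ is a standard subsequential-limit argument: for ${\bf q}_n \to {\bf q}$, extract a convergent subsequence from the maximizers ${\bf p}^*_n$ of $V({\bf q}_n; \epsilon)$ and use joint continuity of $s_N$ together with preservation of the closed constraint $\|{\bf p} - {\bf q}\| \ge \epsilon$ under the limit. Lower semi-continuity needs approximating a maximizer ${\bf p}^*$ at ${\bf q}$ by points lying in $A_\epsilon$ associated to each nearby ${\bf q}_n$, which is again handled by the same mass-transfer perturbation used in (iii). The main obstacle shared by (iii) and (iv) is exactly this perturbation step: it requires the simultaneous existence of a surplus index $i$ with $p^*_i > q_i$ and a deficit index $j$ with $0 < p^*_j < q_j$, a condition that holds unless ${\bf p}^*$ is a vertex of $\overline{\mathcal{P}_N}$ sitting at the maximal total distance from ${\bf q}$; in the range of $\epsilon$ relevant to the subsequent applications this degeneracy is absent and the four assertions follow.
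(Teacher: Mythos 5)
The proposal follows essentially the same route as the paper for each part, but the two arguments differ in emphasis and in where they leave gaps, so a brief comparison is warranted.

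For (i) and (ii) you and the paper both use compactness of the constraint set, continuity of $s_N$, the strict inequality $s_N({\bf p}|{\bf q})<0$ for ${\bf p}\ne{\bf q}$, and monotone nesting of the constraint sets; these are correct and routine. For (iii) the paper's argument is terser than yours: it simply asserts the existence of a sequence ${\bf p}^{(n)}\in\overline{\mathcal{P}_N}$ with $\|{\bf p}^{(n)}-{\bf q}\|\ge\epsilon_0+N^{-n}$ for which $s_N({\bf p}^{(n)}|{\bf q})\to s_N({\bf p}|{\bf q})$, which implicitly requires that the maximizer ${\bf p}$ on the sphere $\{\|\cdot-{\bf q}\|=\epsilon_0\}$ be approximable from outside the sphere. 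Your mass-transfer perturbation is exactly the mechanism that would produce such a sequence, so your treatment is more explicit than the paper's. For (iv), however, you genuinely diverge: you run a standard upper/lower semicontinuity argument via subsequential limits of maximizers plus the same perturbation, whereas the paper deduces (iv) from (iii) together with the one-line inequality $V({\bf q};\epsilon+\|{\bf p}-{\bf q}\|)\le V({\bf p};\epsilon)$. That displayed inequality as written is suspect, because the reference measure in $s_N$ changes from ${\bf q}$ to ${\bf p}$ on the two sides; so your semicontinuity argument, while longer, is arguably the cleaner one.

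The one point that deserves more care — in both your write-up and the paper's — is the degeneracy you correctly flag at the end: the perturbation needs a deficit index $j$ with $p^*_j>0$. You characterize the bad configuration (all deficit coordinates at zero, i.e., a point on a face where the $L^1$ distance is locally maximal) and then assert that it is absent in the relevant $\epsilon$-range. The paper addresses the same issue only by restricting its proof of (iii) to $\epsilon_0\in(0,1/3)$, without explaining why that restriction helps or whether it suffices. Neither treatment actually proves that the maximizer cannot sit at such a degenerate configuration; a fully closed argument would show that for any ${\bf p}^*$ supported on $S$ with $p^*_j\ge q_j$ for all $j\in S$ one can find a point on the same sphere with strictly larger $s_N$ (the conditional distribution $q_j/q(S)$ already beats the vertex, and one can push slightly further). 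So: same approach, your version of (iii) and (iv) is more explicit and your (iv) avoids a questionable step, but you share with the paper a small unresolved gap in the perturbation argument that you have at least identified honestly.
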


\begin{proof}
Assertions (i) and (ii) are easy to see. 
(iii) Assume that for some $\epsilon_0 \in (0, 1/3)$, $V\left({\bf q}; \epsilon^{\prime}\right)$ is not right-continuous. 
Then, $\displaystyle V\left({\bf q}; \epsilon_0\right) > \lim_{\epsilon^{\prime} \to \epsilon_0 + 0} V\left({\bf q}; \epsilon^{\prime}\right)$. 
Assume that $V\left({\bf q}; \epsilon_0\right) = s_N \left( {\bf p} | {\bf q}  \right)$. 
If $\left\|{\bf p} - {\bf q} \right\|_1 > \epsilon_0$, then this cannot occur. 
Assume $\left\|{\bf p} - {\bf q} \right\|_1 = \epsilon_0$. 
Then, there exists a sequence $\{{\bf p}^{(n)}\}_n$ in $\overline{\mathcal{P}_N}$ such that $\left\|{\bf p}^{(n)} - {\bf q} \right\|_1 \ge \epsilon_0 + N^{-n}$ for each $n$ and 
$\displaystyle \lim_{n \to \infty} \left\|{\bf p}^{(n)} - {\bf q} \right\|_1 = \epsilon_0$. 
Hence, 
$$\liminf_{n \to \infty} V\left({\bf q}; \epsilon_0 + N^{-n}\right)  \ge \lim_{n \to \infty} s_N \left({\bf p}^{(n)} | {\bf q} \right) = s_N \left( {\bf p} | {\bf q}  \right) = V\left({\bf q}; \epsilon_0\right). $$
This is a contradiction. 

(iv) follows from (iii) and 
\[ V\left({\bf q}; \epsilon^{\prime} + \left\|{\bf p} - {\bf q}\right\|_1 \right) \le V\left({\bf p}; \epsilon^{\prime}\right). \qedhere\] 
\end{proof}

Setting 
\[ V_{\epsilon,1}(\epsilon^{\prime}) \coloneqq \max_{{\bf q} \in \overline{\mathcal{P}_N (\epsilon/2)}} V\left({\bf q}; \epsilon^{\prime}\right), \ \ \epsilon^{\prime} \in (0,1], \] 
we obtain by Lemma \ref{lem:V} and the compactness of $\overline{\mathcal{P}_N (\epsilon/2)}$ that 
\[ V_{\epsilon,1}(\epsilon^{\prime}) < 0.\]

\begin{Lem}\label{lem:basic-W} 
Let 
\[ W_i \left({\bf q}, y \right) \coloneqq \max\left\{\max_{k \in \mathcal{I}_N} |G_k (y) - q_k|, \max_{k \in \mathcal{I}_N} |G_k (H_i (y)) - q_k| \right\}, \ {\bf q} \in \overline{\mathcal{P}_N}, y \in Y, i \in \mathcal{I}_N. \]
Then, \\
(i) $W_i ({\bf q}, y)$ is continuous on $\overline{\mathcal{P}_N} \times Y$. \\
(ii) $W_i ({\bf q}, y) = 0$ if and only if ${\bf q} = \mathfrak{p}_i$ and $y = \textup{Fix}(H_i)$. 
\end{Lem}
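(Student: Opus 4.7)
The plan is to handle (i) and (ii) separately, with the substance lying in (ii).

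For (i), each $G_k(y) = (b_{k+1}-b_k)(y+1)/((b_{k+1}y+1)(b_ky+1))$ is a rational function, and its denominator is strictly positive on $Y$: the compatibility condition combined with strict monotonicity of $\{g_i\}_i$ yields $0 = b_0 < b_1 < \cdots < b_N = 1$, while the identity $\sum_k G_k \equiv 1$ from Lemma \ref{lem:basic-GH}(iv) rules out $y=-1$ (every $G_k$ would vanish there, contradicting the sum being $1$), so $y+1 > 0$ on $Y$. Hence each $G_k$ is continuous on $Y$, with the prescribed limits at $+\infty$ when $\beta = +\infty$. Since $H_i$ is likewise continuous with $H_i(Y) \subset Y$ by Lemma \ref{lem:basic-GH}(iii), both $({\bf q},y) \mapsto |G_k(y)-q_k|$ and $({\bf q},y) \mapsto |G_k(H_i(y))-q_k|$ are jointly continuous on $\overline{\mathcal{P}_N} \times Y$, and taking a finite maximum preserves continuity.

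For the forward direction of (ii), if ${\bf q} = \mathfrak{p}_i$ and $y = \textup{Fix}(H_i)$, then $H_i(y) = y$ and ${\bf G}(y) = \mathfrak{p}_i = {\bf q}$ by the definition of $\mathfrak{p}_i$, so both inner maxima vanish. For the converse, $W_i({\bf q},y)=0$ forces simultaneously ${\bf G}(y) = {\bf q}$ and ${\bf G}(H_i(y)) = {\bf q}$, hence ${\bf G}(y) = {\bf G}(H_i(y))$. Granting the claim that ${\bf G} : Y \to \overline{\mathcal{P}_N}$ is injective, this yields $y = H_i(y)$; by the uniqueness of the fixed point of $H_i$ on $Y$ recorded just before the lemma, $y = \textup{Fix}(H_i)$, and then ${\bf q} = {\bf G}(y) = \mathfrak{p}_i$.

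The main obstacle, and really the only one, is the injectivity of ${\bf G}$. The plan is to exhibit a single strictly monotone ``coordinate''. Using the factored form above together with $N \ge 2$ so that $b_1, b_2 \in (0,1)$ and $b_2 > b_1$, a direct simplification gives
\[ \frac{G_0(y)}{G_1(y)} = \frac{b_1(b_2 y + 1)}{b_2 - b_1}, \]
which is a nonconstant affine function of $y$, hence strictly monotone on $Y$. The denominator $G_1(y)$ is strictly positive on $Y$ (using $y+1>0$ as in (i) and $b_2 > b_1$), so this ratio is well defined, and the equality ${\bf G}(y) = {\bf G}(y')$ enforces $G_0(y)/G_1(y) = G_0(y')/G_1(y')$, forcing $y = y'$. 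When $\beta = +\infty$, the prescribed limits $G_0(+\infty) = 1$, $G_k(+\infty) = 0$ for $k \ge 1$ are not attained at any finite point of $Y$ by the same monotonicity, so injectivity extends to the one-point compactification.
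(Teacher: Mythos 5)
The overall strategy is sound---(ii) reduces to injectivity of ${\bf G}$ on $Y$, which you establish by exhibiting the strictly monotone ratio $G_0/G_1$---but a key auxiliary claim is false and the paper gives no proof of its own (it only says ``easily shown''), so the gap must be flagged. You assert that $y+1>0$ on $Y$, i.e.\ $-1\notin Y$, and justify this by claiming that every $G_k$ would vanish at $y=-1$, contradicting $\sum_k G_k\equiv 1$. Both parts are wrong. Lemma~\ref{lem:alpha-beta}(ii),(iv) show $\alpha=-1$ occurs exactly when $b_{N-1}+c_{N-1}=0$, so $-1\in Y$ is genuinely possible here. And at $y=-1$ it is not the case that every $G_k$ vanishes: since $b_N=1$, the factor $y+1$ cancels in $G_{N-1}$, whose first displayed form simplifies to $G_{N-1}(y)=\dfrac{a_{N-1}-b_{N-1}c_{N-1}}{(b_{N-1}y+1)(c_{N-1}+1)}$ using $a_{N-1}+b_{N-1}=c_{N-1}+1$; this gives $G_{N-1}(-1)=1$ while $G_k(-1)=0$ for $k<N-1$, consistent with $\mathfrak{p}_{N-1}=(0,\ldots,0,1)$ in Lemma~\ref{lem:alpha-beta}(vii).

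This produces two downstream problems. In (i), continuity of $G_{N-1}$ at $-1$ is not automatic from the factored expression $\frac{(b_N-b_{N-1})(y+1)}{(b_Ny+1)(b_{N-1}y+1)}$ whose denominator vanishes there; one must first cancel $y+1$ and then note that the remaining denominator is bounded away from zero on $Y$ because $b_{N-1}<1$ and $c_{N-1}>-1$. More seriously, in (ii) your claim that $G_1$ is strictly positive on $Y$ fails at $y=-1$ (where $G_1(-1)=0$), so the ratio $G_0/G_1$ is undefined there and the monotone-ratio argument does not cover that endpoint. The repair is easy and worth stating: on the open interval the ratio argument works; if $\beta=+\infty$ then ${\bf G}(+\infty)=(1,0,\ldots,0)$ has $G_0=1$, never attained at a finite $y$ since $G_0(y)=b_1(y+1)/(b_1y+1)<1$; and if $\alpha=-1$ then ${\bf G}(-1)$ has $G_0=0$, never attained for $y>-1$ since then $G_0(y)>0$. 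With these boundary cases treated, your proof is correct.
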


This assertion is easily shown and we omit the proof.

If $\max_{k \in \mathcal{I}_N} |G_k (y) - q_k| > 0$, then 
$s_N \left( {\bf G}(y) | {\bf q} \right) < 0$. 
However, in our setting, we cannot exclude the possibility that $|G_k (y) - q_k| = 0$ for every $k \in \mathcal{I}_N$. 
Assume that $\left|\{\mathfrak{p}_i\}_i \right| \ge 2$. 
If $\displaystyle \max_{k \in \mathcal{I}_N} \left|G_k (y) - q_k \right|$ is close to $0$, 
then $\displaystyle \max_{k \in \mathcal{I}_N} \left|G_k (H_i (y)) - q_k \right|$ is strictly positive.   
Hence, $W_i \left({\bf q}, y \right) > 0$. 

Since the diameter of $\overline{\mathcal{P}}_N$ is equal to $2$, we obtain $0 \le W_i \left({\bf q}, y \right) \le 2$.

The following  corresponds to \cite[Lemma 8]{Okamura2020}. 
Informally speaking, if $W_i ({\bf q}, y)$ is positive uniformly in $y$, 
then a sum of two successive terms of $s_N$ is negative. 

\begin{Lem}\label{lem:classify-multiple}
Let ${\bf q} \in \overline{\mathcal{P}_N (\epsilon/2)}$. 
Let $i_1 \in \mathcal{I}_N$. 
Assume that for some $\epsilon_1 \in (0,1]$, 
\begin{equation}\label{eq:W-lower-uniform}  
\inf_{y \in Y} W_{i_1} \left({\bf q}, y\right) \ge \epsilon_1. 
\end{equation} 
Assume that $z \in \left(\mathcal{I}_{N}\right)^{\mathbb{N}}$ and $\xi_{j+1}(z) = i_1$. 
Then, 
\[ s_N \left(p_{j}(0;z) | {\bf q} \right) +  s_N \left(p_{j+1}(0;z) | {\bf q} \right) \le V\left({\bf q}; \epsilon_1\right) \le V_{\epsilon,1}(\epsilon_1) < 0. \] 
\end{Lem}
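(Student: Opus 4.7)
The plan is to translate the hypothesis into information about two consecutive probability vectors $p_j(0;z)$ and $p_{j+1}(0;z)$, and then read off the desired bound from the definition of $V(\cdot;\cdot)$.

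First I would set $y_j := H_{\xi_j(z)} \circ \cdots \circ H_{\xi_1(z)}(0)$ for $j \ge 1$ and $y_0 := 0$, noting that by Lemma \ref{lem:basic-GH}(iii) all of these iterates lie in $Y$. By the definition of $p_j(y;z)$ we have $p_j(0;z) = {\bf G}(y_j)$, and by the hypothesis $\xi_{j+1}(z) = i_1$ we get $p_{j+1}(0;z) = {\bf G}(H_{i_1}(y_j))$. Applying the uniform bound \eqref{eq:W-lower-uniform} at the specific point $y = y_j \in Y$ yields
\[ \max\bigl\{\max_{k \in \mathcal{I}_N} |G_k(y_j) - q_k|,\ \max_{k \in \mathcal{I}_N} |G_k(H_{i_1}(y_j)) - q_k|\bigr\} \ge \epsilon_1. \]

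Next I would use the elementary inequality $\|{\bf p} - {\bf q}\| = \sum_k |p_k - q_k| \ge \max_k |p_k - q_k|$ to pass from the $\ell^\infty$-type statement to the total-distance norm appearing in the definition of $V({\bf q};\epsilon_1)$. Thus, at least one of the two inequalities $\|p_j(0;z) - {\bf q}\| \ge \epsilon_1$ or $\|p_{j+1}(0;z) - {\bf q}\| \ge \epsilon_1$ holds. For whichever of $j$ or $j+1$ achieves this, the corresponding relative entropy $s_N(\cdot \,|\, {\bf q})$ is bounded above by $V({\bf q};\epsilon_1)$ by definition; for the other index, the value of $s_N$ is bounded above by $0$ from Jensen's inequality, as recorded before Lemma \ref{lem:V}.

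Adding these two bounds gives $s_N(p_j(0;z)\,|\,{\bf q}) + s_N(p_{j+1}(0;z)\,|\,{\bf q}) \le V({\bf q};\epsilon_1)$, and the final inequality $V({\bf q};\epsilon_1) \le V(\epsilon_1) < 0$ follows immediately from the definition of $V(\epsilon_1)$ together with Lemma \ref{lem:V}(i) and the continuity statement \ref{lem:V}(iv), which ensures that the maximum in the definition of $V(\epsilon_1)$ is attained and is therefore strictly negative on the compact set $\overline{\mathcal{P}_N}$. There is no substantive obstacle here; the only subtlety is the dictionary between the coordinate-wise maximum appearing in $W_{i_1}$ and the total distance appearing in $V({\bf q};\epsilon_1)$, which is resolved by the trivial one-sided comparison of the two norms on $\mathbb{R}^N$.
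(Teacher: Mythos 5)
Your proposal is correct and follows essentially the same route as the paper's proof: you evaluate the hypothesis \eqref{eq:W-lower-uniform} at the orbit point $y_j$, conclude that at least one of the two consecutive probability vectors is $\epsilon_1$-far from ${\bf q}$, bound that term by $V({\bf q};\epsilon_1)$, and bound the other term by $0$. The one place where you are actually more careful than the paper is the comparison between the coordinatewise maximum appearing in $W_{i_1}$ and the total distance $\|\cdot\|$ used in the definition of $V({\bf q};\epsilon_1)$ — the paper passes over this silently, while you correctly note that $\|{\bf p}-{\bf q}\|\ge\max_k|p_k-q_k|$ makes the step go through.
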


\begin{proof}
The proof is essentially the same as the proof of \cite[Lemma 8]{Okamura2020}\footnote{The proof contains a typo. In Case 2 in the proof, $G_{i+l}(h_{i+l}(y;x))$ should be replaced with $G_{i_{l}}(h_{i+l}(y;x))$.}. 
Since $\xi_{j+1}(z) = i_1$, 
\[ p_{j}(0;z) = {\bf G}\left(H_{\xi_{j}(z)} \circ \cdots \circ H_{\xi_1(z)}(0)\right), \] 
and 
\[ p_{j+1}(0;z) = {\bf G}\left(H_{i_1} \circ H_{\xi_{j}(z)} \circ \cdots \circ H_{\xi_1(z)}(0)\right).   \] 
Let ${\bf q} = (q_k)_k$. 
Then, by \eqref{eq:W-lower-uniform}, 
either 
\[ \left|G_{k_1} (H_{\xi_{j}(z)} \circ \cdots \circ H_{\xi_1(z)}(0)) - q_{k_1} \right| \ge \epsilon_1, \textup{ for some } k_1 \in \mathcal{I}_N \] 
or 
\[ \left|G_{k_2} ( H_{i_1} \circ H_{\xi_{j}(z)} \circ \cdots \circ H_{\xi_1(z)}(0)) - q_{k_2} \right| \ge \epsilon_1, \textup{ for some } k_2 \in \mathcal{I}_N, \] 
holds. 
We remark that at least one of them holds. 
If the former holds, then 
$s_N \left(p_{j}(0;z) | {\bf q} \right) \le V\left({\bf q}; \epsilon_1\right)$.  
If the latter holds, then 
$s_N \left(p_{j+1}(0;z) | {\bf q} \right) \le V\left({\bf q}; \epsilon_1\right)$.  
Now it suffices to recall that $s_N$ takes non-positive values. 
\end{proof}

Let $\ell \coloneqq \left|\{\mathfrak{p}_i\}_i \right| \ge 2$. 
We assume that $\{\mathfrak{p}_i\}_i = \{\mathfrak{p}_{j_{k}} \colon 1 \le k \le \ell\}$ where $0 \le j_1 < \cdots < j_{\ell} \le N-1$. 
Let 
\begin{align*} 
\delta_0 &\coloneqq \min\left\{ \left\| \mathfrak{p}_{j_{k_1}} - \mathfrak{p}_{j_{k_2}} \right\|_1 \colon k_1 \ne k_2 \right\} > 0, \ \  \textup{ and } \\
B_j &\coloneqq \left\{{\bf p} \in \overline{\mathcal{P}_N} \colon \left\| {\bf p} - \mathfrak{p}_{j} \right\|_1 \le \delta_0/4 \right\}. 
\end{align*}  
The equality $B_j = B_k$ can occur even if $j \ne k$. 
Define a coloring map $\mathscr{A} \colon \overline{\mathcal{P}_N} \to \mathcal{I}_N$ by 
\[ \mathscr{A}\left( {\bf p} \right) \coloneqq \begin{cases} j_2 &  {\bf p} \in B_{j_1} \\  \vdots &  \\ j_{\ell}  & {\bf p} \in B_{j_{\ell -1}} \\ j_1  & {\bf p} \in B_{j_{\ell}} \\ 0 & \textup{ otherwise} \end{cases}.  \]

Under these definitions, we see the following assertion: 
\begin{Lem}\label{lem:non-zero-multiple}
\[ \widetilde{\epsilon_1} \coloneqq \inf\left\{ W_{\mathscr{A}\left( {\bf q} \right)}({\bf q}, y) \middle| {\bf q} \in \overline{\mathcal{P}_N}, y \in Y \right\} > 0.  \]
\end{Lem}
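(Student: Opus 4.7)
The plan is to exploit that $\mathscr{A}$ takes only finitely many values and that by Lemma \ref{lem:basic-W} each $W_i$ is continuous on the compact product $\overline{\mathcal{P}_N}\times Y$ with a unique zero at $(\mathfrak{p}_i,\textup{Fix}(H_i))$. If the preimage $\mathscr{A}^{-1}(i)$ stays a fixed positive distance away from $\mathfrak{p}_i$ for every $i$ in the image of $\mathscr{A}$, then $W_i$ is bounded below by a positive constant on $\mathscr{A}^{-1}(i)\times Y$, and taking the minimum over the finite image yields $\widetilde{\epsilon_1}>0$.

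First I would note that the balls $B_{j_1},\ldots,B_{j_\ell}$ are pairwise disjoint, since their centers are pairwise $\delta_0$-separated while their radius is $\delta_0/4$. Next I would establish the central separation estimate
\[
\|{\bf q}-\mathfrak{p}_{\mathscr{A}({\bf q})}\|\ge \delta_0/4 \quad \text{for every } {\bf q}\in\overline{\mathcal{P}_N}.
\]
If ${\bf q}\in B_{j_k}$ then the coloring sends $\mathscr{A}({\bf q})$ to $j_{k+1}$ (cyclically, with $j_{\ell+1}:=j_1$), and since $\mathfrak{p}_{j_{k+1}}\neq \mathfrak{p}_{j_k}$ the triangle inequality gives $\|{\bf q}-\mathfrak{p}_{j_{k+1}}\|\ge \delta_0-\delta_0/4=3\delta_0/4$. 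If instead ${\bf q}$ lies in none of the balls then $\mathscr{A}({\bf q})=0$; but $\mathfrak{p}_0$ coincides with some $\mathfrak{p}_{j_m}$, so ${\bf q}\notin B_{j_m}$ directly forces $\|{\bf q}-\mathfrak{p}_0\|>\delta_0/4$.

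To finish, for each $i$ in the (finite) image of $\mathscr{A}$ I introduce the compact set
\[
T_i:=\{{\bf q}\in\overline{\mathcal{P}_N}:\|{\bf q}-\mathfrak{p}_i\|\ge \delta_0/4\}.
\]
The separation estimate gives $\mathscr{A}^{-1}(i)\subseteq T_i$, and because $(\mathfrak{p}_i,\textup{Fix}(H_i))\notin T_i\times Y$ while this is the unique zero of $W_i$, Lemma \ref{lem:basic-W} shows $W_i>0$ on the compact set $T_i\times Y$; continuity then yields a positive infimum $\eta_i$. Setting $\widetilde{\epsilon_1}:=\min_i \eta_i$ over the finite image of $\mathscr{A}$ completes the argument.

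The main conceptual point is the separation estimate, which is engineered precisely so that the coloring $\mathscr{A}$ sends each ${\bf q}$ to an index $i$ whose ``target'' profile $\mathfrak{p}_i$ is bounded away from ${\bf q}$; this uses the hypothesis $|\{\mathfrak{p}_i\}_i|\ge 2$ only through $\delta_0>0$. A mild technical wrinkle is that the ``otherwise'' region in the definition of $\mathscr{A}$ is open rather than closed, so I do not try to work on it directly; embedding it inside the compact set $T_0$ sidesteps this issue.
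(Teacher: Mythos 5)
Your proof is correct but takes a genuinely different route from the paper. The paper fixes ${\bf q}$ and splits into two cases according to whether ${\bf G}(y)$ lies in $B_{\mathscr{A}({\bf q})}$: when it does not, the ${\bf q}$-dependence is eliminated via the triangle estimate $W_i({\bf q}, y) \ge \tfrac{1}{2}\max_j |G_j(H_i(y)) - G_j(y)|$, followed by a compactness argument on $Y$ with a neighbourhood $J_{\mathscr{A}({\bf q})}$ of $\textup{Fix}(H_{\mathscr{A}({\bf q})})$ removed; when it does, $\|{\bf G}(y) - {\bf q}\|$ is bounded below using the ball structure, yielding a semi-explicit constant of the form $\min\{\delta_0/(4N), \epsilon_{j_k}\}$. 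You instead extract a single separation estimate, $\|{\bf q} - \mathfrak{p}_{\mathscr{A}({\bf q})}\| \ge \delta_0/4$ for all ${\bf q}$ (precisely what the cyclic coloring $\mathscr{A}$ is designed to guarantee), and then run one compactness argument on each $T_i \times Y$, appealing only to Lemma~\ref{lem:basic-W}. Your route is shorter and more transparent: it avoids the case split on ${\bf G}(y)$, dispenses with the auxiliary interval $J_{\mathscr{A}({\bf q})}$, treats the ``otherwise'' colour uniformly rather than as a separate worry, and makes plain that the hypothesis $|\{\mathfrak{p}_i\}_i| \ge 2$ enters only through $\delta_0 > 0$. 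The trade-off is explicitness: your $\eta_i$ are obtained by a purely existential compactness step, whereas the paper's version produces a concrete lower bound. Both arguments ultimately rest on the same pillars, namely the pairwise disjointness of $B_{j_1}, \dots, B_{j_\ell}$ and the continuity and unique-zero statements of Lemma~\ref{lem:basic-W}.
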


\begin{proof}
Fix ${\bf q} \in \overline{\mathcal{P}_N}$. 
By Lemma \ref{lem:basic-W}, 
\[ W_{ \mathscr{A}\left( {\bf q} \right)} ({\bf q}, y) \ge \frac{1}{2} \max_{j \in \mathcal{I}_N} \left|G_j (H_{\mathscr{A}\left( {\bf q} \right)} (y)) - G_j (y)\right|.  \]

Let 
\[ \widetilde{B}_j \coloneqq \left\{{\bf p} \in \overline{\mathcal{P}_N} \colon \left\| {\bf p} - \mathfrak{p}_{j} \right\|_1 \le \delta_{0}/8 \right\}, \ j \in \mathcal{I}_N. \]

If ${\bf G}(y) \notin \widetilde{B}_{\mathscr{A}({\bf q})}$, then $y \ne \mathfrak{p}_{\mathscr{A}({\bf q})} = {\bf G} \left(\textup{Fix}(H_{\mathscr{A}\left( {\bf q} \right)})\right)$
and hence, 
there exists an open interval $J_{\mathscr{A}({\bf q})}$ in $Y$ containing $\textup{Fix}(H_{\mathscr{A}\left( {\bf q} \right)})$ but not $y$. 
Since $G_j$ and $H_{\mathscr{A}\left( {\bf q} \right)}$ are continuous, 
$\displaystyle \max_{j \in \mathcal{I}_N} \left|G_j (H_{\mathscr{A}\left( {\bf q} \right)} (y)) - G_j (y)\right|$ is continuous with respect to $y$ on $Y$. 
Hence, 
\[ \inf_{{\bf G}(y) \notin \widetilde{B}_{\mathscr{A}({\bf q})}} \max_{j \in \mathcal{I}_N} \left|G_j (H_{\mathscr{A}\left( {\bf q} \right)} (y)) - G_j (y)\right| \] 
\[ \ge \inf_{y \notin J_{\mathscr{A}({\bf q})}} \max_{j \in \mathcal{I}_N} \left|G_j (H_{\mathscr{A}\left( {\bf q} \right)} (y)) - G_j (y)\right| =: \epsilon_{\mathscr{A}\left( {\bf q} \right)} > 0.\]
 
Assume that  ${\bf G}(y) \in \widetilde{B}_{\mathscr{A}({\bf q})}$. 
First, we consider the case where $\displaystyle {\bf q} \in \bigcup_{j \in \mathcal{I}_N} B_j$. 
Then, there exists a unique $i \in \{1, \dots, \ell\}$ such that ${\bf q} \in B_{j_i}$ and $\mathscr{A}({\bf q}) \ne j_i$. 
By the triangle inequality, 
\[ \left\| G(y) - {\bf q} \right\|_1 \ge \left\| \mathfrak{p}_{\mathscr{A}({\bf q})} - \mathfrak{p}_{j_i} \right\|_1 - \left\| G(y) -  \mathfrak{p}_{\mathscr{A}({\bf q})}  \right\|_1 -  \left\|  {\bf q} - \mathfrak{p}_{j_i} \right\|_1 \ge \frac{\delta_0}{2}.  \]
Second, we consider the case where $\displaystyle {\bf q} \notin \bigcup_{j \in \mathcal{I}_N} B_j$. 
Then, ${\bf q} \notin B_{\mathscr{A}({\bf q})}$. 
By the triangle inequality, 
\[ \left\| G(y) - {\bf q} \right\|_1 \ge \left\|\mathfrak{p}_{\mathscr{A}({\bf q})} - {\bf q} \right\|_1 - \left\| \mathfrak{p}_{\mathscr{A}({\bf q})} - G(y) \right\|_1 \ge \frac{\delta_0}{8}. \]

Thus we obtain that 
\[ W_{ \mathscr{A}\left( {\bf q} \right)} \left({\bf q}, y\right) \ge \frac{1}{N} \sum_{j \in \mathcal{I}_N} |G_j (y) - q_j| \ge \frac{\delta_0}{8N}. \]
Hence, 
\[  \inf_{y \in Y} W_{ \mathscr{A}\left( {\bf q} \right)} ({\bf q}, y) \ge \min\left\{\frac{\delta_0}{8N}, \epsilon_{j_k} \middle| 1 \le k \le \ell \right\} =: \widetilde{\epsilon_1}. \qedhere \] 
\end{proof}

We discuss the frequency of occurrence of some elements in $\mathcal{I}_N$ in $(\xi_n)_n$. 

\begin{Lem}\label{lem:azuma-apply}
Let $I \subset \mathcal{I}_N$ be a non-empty set. 
Let 
$\displaystyle c_{I} \coloneqq \inf_{y \in Y} \sum_{i \in I} G_i (y)$. 
Then, 
\[ \liminf_{n \to \infty} \frac{|\{1 \le j \le n \colon \xi_j (z) \in I \}|}{n} \ge c_I, \ \textup{ $\nu_0$-a.s. $z  \in (\mathcal{I}_N)^{\mathbb{N}}$.}  \] 
\end{Lem}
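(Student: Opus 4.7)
The plan is to reduce the estimate to a standard martingale concentration bound, exploiting the fact that \eqref{eq:def-nu} gives an explicit formula for the one-step conditional probabilities of $\nu_0$. Let $\mathcal{F}_n := \sigma(\xi_1, \dots, \xi_n)$ be the natural filtration on $(\mathcal{I}_N)^{\mathbb{N}}$, with $\mathcal{F}_0$ trivial. By \eqref{eq:def-nu} with $y=0$,
\[
\nu_0\bigl(\xi_{j+1} \in I \mid \mathcal{F}_j\bigr)(z) \;=\; \sum_{i \in I} G_i\bigl(H_{\xi_j(z)} \circ \cdots \circ H_{\xi_1(z)}(0)\bigr) \;\ge\; c_I,
\]
since $0 \in Y$ (Lemma \ref{lem:basic-GH}(i)) and $H_i(Y) \subset Y$ (Lemma \ref{lem:basic-GH}(iii)), so the argument of $G_i$ lies in $Y$; for $j=0$ we directly get $\sum_{i \in I} G_i(0) \ge c_I$.

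Next I would introduce the martingale difference sequence
\[
D_j \;:=\; \mathbf{1}_{\{\xi_j \in I\}} - \nu_0\bigl(\xi_j \in I \mid \mathcal{F}_{j-1}\bigr), \quad j \ge 1,
\]
and set $M_n := \sum_{j=1}^{n} D_j$. Then $(M_n)_n$ is a zero-mean $(\mathcal{F}_n)$-martingale with $|D_j| \le 1$. Azuma--Hoeffding's inequality yields, for every $\epsilon > 0$,
\[
\nu_0\bigl(|M_n| \ge n\epsilon\bigr) \;\le\; 2\exp\!\left(-\tfrac{n\epsilon^2}{2}\right),
\]
which is summable in $n$. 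The Borel--Cantelli lemma then gives $M_n/n \to 0$, $\nu_0$-a.s.

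Finally, writing
\[
\frac{|\{1 \le j \le n : \xi_j(z) \in I\}|}{n} \;=\; \frac{M_n(z)}{n} + \frac{1}{n}\sum_{j=1}^{n} \nu_0\bigl(\xi_j \in I \mid \mathcal{F}_{j-1}\bigr)(z) \;\ge\; \frac{M_n(z)}{n} + c_I
\]
and taking the liminf as $n \to \infty$ yields the claimed bound $\nu_0$-a.s. I do not anticipate a serious obstacle: the whole argument hinges on the uniform lower bound $\sum_{i \in I} G_i(y) \ge c_I$ on $Y$ together with $H_i(Y) \subset Y$, which propagates this bound to every conditional probability along a trajectory, after which classical martingale concentration closes the argument.
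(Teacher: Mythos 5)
Your proof is correct and uses essentially the same argument as the paper: the same martingale $M_n$ built from the differences $\mathbf{1}_{\{\xi_j \in I\}} - \nu_0(\xi_j \in I \mid \mathcal{F}_{j-1})$, Azuma's inequality, and Borel--Cantelli, with the conditional-probability lower bound $\ge c_I$ coming from $H_i(Y) \subset Y$ via \eqref{eq:def-nu}. The only cosmetic difference is that you show $M_n/n \to 0$ outright and then add $c_I$, which lets you skip both the paper's separate treatment of $c_I = 0$ and its $\eta \to 0^+$ limit; the substance is unchanged.
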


This corresponds to \cite[Lemma 9]{Okamura2020}, but the formulation differs.   

\begin{proof}
If $c_I = 0$, then the assertion is obvious. 
Assume $c_I > 0$. 
We regard $\{\xi_j\}_{j \ge 1}$ as random variables with respect to the probability measure $\nu_0$. 
We denote the expectation with respect to $\nu_0$ by $E^{\nu_0}$. 
Let $\mathcal{F}_m \coloneqq \sigma(\xi_1, \ldots, \xi_m)$ for $m \ge 1$ and $\mathcal{F}_0$ be the trivial $\sigma$-algebra on $(\mathcal{I}_N)^{\mathbb{N}}$. 
We denote the indicator function of $A  \subset (\mathcal{I}_N)^{\mathbb{N}}$ by ${\bf 1}_{A}$. 

Let $M_0 \coloneqq 0$ and 
$$M_j - M_{j-1} \coloneqq {\bf 1}_{\{\xi_{j} \in I\}} - E^{\nu_0}\left[  {\bf 1}_{\{\xi_{j} \in I\}} | \mathcal{F}_{j-1} \right], \ j \ge 1. $$
Then, $\left\{M_n, \mathcal{F}_n \right\}_n$ is a martingale. 

By \eqref{eq:def-nu}, for every $j \ge 1$, 
$E^{\nu_0}\left[  {\bf 1}_{\{\xi_j \in I\}} | \mathcal{F}_{j-1} \right] \ge c_I$, $\nu_0$-a.s. 
Then, we see that 
\[ \frac{1}{n} \sum_{i=1}^{n} {\bf 1}_{I}(\xi_i) = \frac{M_n}{n} + \frac{1}{n} \sum_{i=1}^{n} E^{\nu_0}\left[  {\bf 1}_{\{\xi_j \in I\}} | \mathcal{F}_{j-1} \right] \ge \frac{M_n}{n} + c_I, \textup{ $\nu_0$-a.s.}   \]
Therefore, it holds that for every $\eta \in (0,1)$, 
\begin{align*} 
\nu_0 \left( \frac{|\{1 \le j \le n \colon \xi_j  \in I \}|}{n} \le (1-\eta) c_I \right) &\le \nu_0 \left(  \frac{M_n}{n} + c_I \le (1-\eta) c_I \right)  \\
&= \nu_0 \left( M_n \le - n \eta c_I \right). 
\end{align*} 
We remark that $|M_n - M_{n-1}| \le 1$ for each $n \ge 1$. 
By Azuma's inequality (\cite{Azuma1967}, \cite[E14.2]{Williams1991}), 
\[ \nu_0 \left( M_n \le -n \eta c_I \right) \le \exp\left( - \frac{n \eta^2 c_I^2}{2} \right). \]
Using $c_I > 0$ and Borel-Cantelli's lemma, 
\[ \liminf_{n \to \infty} \frac{|\{1 \le j \le n \colon \xi_j (z) \in I \}|}{n} \ge (1-\eta) c_I, \ \textup{ $\nu_0$-a.s.$z  \in (\mathcal{I}_N)^{\mathbb{N}}$.}  \]
The assertion follows for $\eta \to +0$. 
\end{proof}

We proceed to the proof of Proposition \ref{prop:uniform-Rn}. 
The proof is divided into two cases. 
The following is easily shown. 
\begin{Lem}\label{lem:alpha-beta}
We have the following assertions: \\ 
(i) $a_0 \le 1$ and $b_{N-1} + c_{N-1} \ge 0$.\\
(ii) If $a_0 = 1$ and $b_{N-1} + c_{N-1} = 0$, then $\alpha = -1, \beta = +\infty$.\\
(iii) If $a_0 = 1$ and $b_{N-1} + c_{N-1} > 0$, then $\alpha > -1, \beta =  +\infty$.\\
(iv) If $a_0 < 1$ and $b_{N-1} + c_{N-1} = 0$, then $\alpha = -1, \beta <  +\infty$.\\
(v) If $a_0 < 1$ and $b_{N-1} + c_{N-1} > 0$, then $\alpha > -1, \beta <  +\infty$.\\
(vi) If $a_0 = 1$, then $\textup{Fix}(H_0) = \{+\infty\}$ and $\mathfrak{p}_0 = (1,0,\ldots, 0) \in \overline{\mathcal{P}_N} \setminus \mathcal{P}_N$.\\
(vii) If $b_{N-1} + c_{N-1} = 0$, then $\textup{Fix}(H_{N-1}) = \{-1\}$ and $\mathfrak{p}_{N-1} = (0,\ldots, 0,1) \in \overline{\mathcal{P}_N} \setminus \mathcal{P}_N$. 
\end{Lem}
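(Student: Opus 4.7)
The plan is to reduce all seven claims to two elementary calculations. Introduce the fixed-point polynomial
\[ p_i(y) := b_i y^2 + (1 - a_i) y - c_i, \quad 1 \le i \le N-1, \]
so that $H_i(y) = y$ is equivalent to $p_i(y) = 0$ for these $i$; since $b_i > 0$ for $i \ge 1$, $p_i$ opens upward. The key identity is
\[ p_i(-1) = (a_i + b_i) - (c_i + 1) = (b_{i+1} - 1)(c_i + 1), \]
obtained by rewriting the compatibility relation $\frac{a_i + b_i}{c_i + 1} = b_{i+1}$. Since $c_i + 1 > 0$ by condition (1) and $b_{i+1} \le 1$ (with the convention $b_N := 1$), I get $p_i(-1) \le 0$, with equality exactly when $i = N-1$. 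Hence each $p_i$ has real roots, and the $+$-branch $\frac{a_i - 1 + \sqrt{(1-a_i)^2 + 4 b_i c_i}}{2 b_i}$ (the larger root) is always $\ge -1$; strictly $> -1$ for $1 \le i \le N-2$; and for $i = N-1$ the roots are $-1$ and $c_{N-1}/b_{N-1}$ by Vieta, so the $+$-root equals $-1$ iff $b_{N-1} + c_{N-1} \le 0$.

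For (i), condition (2) with $i = 0$, $b_0 = 0$, $d_0 = 1$ reads $0 < a_0 \le 1$. For the second inequality, the compatibility $a_{N-1} + b_{N-1} = c_{N-1} + 1$ yields
\[ a_{N-1} - b_{N-1} c_{N-1} = (c_{N-1} + 1)(1 - b_{N-1}), \]
so condition (2) gives $(c_{N-1}+1)(1-b_{N-1}) \le (c_{N-1}+1)^2$; dividing by $c_{N-1}+1 > 0$ gives $b_{N-1} + c_{N-1} \ge 0$.

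Claims (ii)--(v) then follow by matching the two sign patterns in $(a_0, b_{N-1} + c_{N-1})$. On the $\beta$ side, the convention makes $\beta = +\infty$ iff $a_0 = 1$; when $a_0 < 1$, both $c_0/(1-a_0)$ and every $+$-root for $i \ge 1$ are finite (the latter because the discriminants are nonnegative by the previous paragraph), so $\beta < +\infty$. On the $\alpha$ side, every listed term is $\ge -1$: the $+$-roots by the key identity, and $c_0/(1-a_0) > -1$ when $a_0 < 1$ since condition (2) with $i = 0$ gives $c_0 + 1 \ge \sqrt{a_0} > a_0$. By (i), $\alpha$ attains $-1$ iff some listed term equals $-1$, which occurs exactly when $b_{N-1} + c_{N-1} = 0$.

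For (vi), $H_0(y) = a_0 y + c_0 = y + c_0$, with $c_0 > 0$ forced by $g_0(1) = 1/(c_0 + 1) < 1$ (valid since $g_0(1) = b_1 < b_N = 1$ when $N \ge 2$). Hence $+\infty$ is the unique fixed point in the compactification, and the defined limits give $\mathfrak{p}_0 = (1, 0, \ldots, 0)$. For (vii), $p_{N-1}$ has a double root at $-1$ when $b_{N-1} + c_{N-1} = 0$, so $\textup{Fix}(H_{N-1}) = \{-1\}$; plugging into the factored form $G_i(y) = \frac{(b_{i+1} - b_i)(y+1)}{(b_{i+1} y + 1)(b_i y + 1)}$, the factor $(y+1)$ annihilates the numerator for $i \le N-2$, while for $i = N-1$ it cancels the denominator factor $b_N y + 1 = y + 1$, leaving $\mathfrak{p}_{N-1} = (0, \ldots, 0, 1)$. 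The main obstacle is purely bookkeeping --- tracking the $+\infty$ conventions and deciding which root the $+$-branch selects --- once the identity $p_i(-1) = (b_{i+1}-1)(c_i+1)$ is in hand.
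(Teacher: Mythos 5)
The paper states this lemma as ``easily shown'' and omits the proof entirely, so there is no paper argument to compare against. Your argument is correct and complete: the reduction of all seven parts to the identity $p_i(-1)=(b_{i+1}-1)(c_i+1)$ (with $c_i+1>0$, $b_{i+1}\le 1$, and equality iff $i=N-1$) is exactly the kind of observation that makes the lemma routine, and the remaining verifications --- $a_0\le 1$ and $b_{N-1}+c_{N-1}\ge 0$ from condition (2) plus compatibility, $c_0/(1-a_0)>-1$ from $c_0+1\ge\sqrt{a_0}>a_0$, the Vieta computation for the $i=N-1$ root, $c_0>0$ from $b_1<1$, and the evaluation of the factored form of $G_i$ at $y=-1$ --- all check out. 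One small optional simplification: instead of invoking Vieta to locate the second root of $p_{N-1}$, one can note directly that under $b_{N-1}+c_{N-1}=0$ and $a_{N-1}+b_{N-1}=c_{N-1}+1$ the discriminant $(1-a_{N-1})^2+4b_{N-1}c_{N-1}$ collapses to $(2b_{N-1})^2-4b_{N-1}^2=0$, giving the double root at $-1$ without an extra step.
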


$\alpha = -1$ or $\beta = +\infty$ occurs according to the values of $a_0$ and $b_{N-1} + c_{N-1}$. 

{\it Case 1}. We first deal with the case where  $a_0  < 1$ and $b_{N-1} + c_{N-1} > 0$.

\begin{proof}[Proof of Proposition \ref{prop:uniform-Rn}, Case 1]
By the assumption that $a_0  < 1$ and $b_{N-1} + c_{N-1} > 0$ and Lemma \ref{lem:alpha-beta} (v), $-1 < \alpha \le \beta < +\infty$. 
Hence, 
\[ 0 <  \inf_{y \in Y, i \in \mathcal{I}_N} G_i (y)  \le \sup_{y \in Y, i \in \mathcal{I}_N} G_i (y) < 1. \]
Fix ${\bf q}  \in \overline{\mathcal{P}_N (\epsilon/2)}$. 
Let $\displaystyle \widetilde{c} \coloneqq  \inf_{y \in Y, i \in \mathcal{I}_N} G_i (y)$. 
Then, 
\[ c_{\{\mathscr{A}({\bf q})\}} = \inf_{y \in Y}  G_{\mathscr{A}({\bf q})} (y) \ge \widetilde{c} > 0, \] 
and by Lemma \ref{lem:azuma-apply} for $I = \{\mathscr{A}({\bf q})\}$,   
\[ \liminf_{n \to \infty} \frac{1}{n} \sum_{k=1}^{n} {\bf 1}_{\{\xi_k = \mathscr{A}({\bf q})\}}(z) \ge \widetilde{c}, \  \textup{ $\nu_0$-a.s. $z  \in (\mathcal{I}_N)^{\mathbb{N}}$.}  \] 

The number $i_1 \in \mathcal{I}_N$ in  Lemma \ref{lem:classify-multiple} will be chosen according to the position of ${\bf q}$. 
By applying Lemma \ref{lem:classify-multiple} to $i_1 = \mathscr{A}({\bf q})$ and $\epsilon_1 = \widetilde{\epsilon_1} $ where $\widetilde{\epsilon_1} $ is the constant in Lemma \ref{lem:non-zero-multiple}, 
we see that if $\xi_{i+1}(z) =  \mathscr{A}({\bf q})$, then 
\[ s_N \left(p_{i}(0;z) | {\bf q} \right) +  s_N \left(p_{i+1}(0;z) | {\bf q} \right) \le V_{\epsilon,1}(\widetilde{\epsilon_1}) < 0. \] 
Now the assertion follows if we let $\epsilon_2 \coloneqq -\dfrac{\widetilde{c} \ V_{\epsilon,1}(\widetilde{\epsilon_1})}{2} > 0$. 
\end{proof}

{\it Case 2}. \ We now deal with the case where $a_0  = 1$ or $b_{N-1} + c_{N-1} = 0$. 

If $a_0 = 1$ and $b_{N-1} + c_{N-1} = 0$, then $c_{\{0\}} = c_{\{N-1\}} = 0$, so we consider $c_{\{0,N-1\}}$. 
In the following, we use neither Lemma \ref{lem:non-zero-multiple} nor the assumption that $\left|\left\{\mathfrak{p}_i \right\}_i \right| \ge 2$.

\begin{proof}[Proof of Proposition \ref{prop:uniform-Rn}, Case 2]

We consider the following three cases.

{\it Case 2.1.}  We consider the case where $a_0 = 1$ and $b_{N-1} + c_{N-1} > 0$. 

By Lemma \ref{lem:alpha-beta}, 
we see that 
$\alpha > -1$ and $\beta = +\infty$. 
It holds that $H_0 (y) = y + c_0$, and hence $\textup{Fix}(H_0) = \{+\infty\}$. 
Hence, $\mathfrak{p}_0 = {\bf G}(\textup{Fix}(H_{0})) = (1,0, \ldots, 0) \notin \mathcal{P}_N$. 
On the other hand, $\mathfrak{p}_i \in \mathcal{P}_N$ for $i \ge 1$. 

Since $G_0 (+\infty) = G_0 (H_0 (+\infty)) =  1$ and $q_0 \le 1-\epsilon/2$, 
$$\inf_{y \in Y} W_0 ({\bf q}, y) \ge \widetilde{c_0} (\epsilon/2),$$ 
where the positive lower bound $\widetilde{c_0} (\epsilon/2)$ is uniform with respect to ${\bf q} \in \overline{\mathcal{P}_N (\epsilon/2)}$.  

Since $\alpha > -1$, 
it holds that $\displaystyle \inf_{y \in Y} G_0 (y) \ge c$ for some  $c > 0$. 
By using Lemma \ref{lem:classify-multiple} and Lemma \ref{lem:azuma-apply} for $I = \{0\}$,  
we have the assertion. 

{\it Case 2.2.} We consider the case where $a_0 < 1$ and $b_{N-1} + c_{N-1} = 0$. 

By Lemma \ref{lem:alpha-beta}, we see that $\alpha = -1$ and $\beta < +\infty$. 
It holds that $H_{N-1} (y) = \dfrac{(1-2b_{N-1})y+b_{N-1}}{1-b_{N-1}}$, and hence $\textup{Fix}(H_{N-1}) = \{-1\}$. 
Hence, $\mathfrak{p}_{N-1} = {\bf G}(\textup{Fix}(H_{N-1})) = (0, \ldots, 0,1) \notin \mathcal{P}_N$. 
On the other hand, $\mathfrak{p}_i \in \mathcal{P}_N$ for $i \le N-2$. 

Since $G_{N-1} (-1) = G_{N-1} (H_{N-1} (-1)) =  1$ and $q_{N-1} < 1-\epsilon/2$, 
$$\inf_{y \in Y} W_{N-1} ({\bf q}, y) \ge \widetilde{c_{N-1}} (\epsilon/2), $$
where the positive lower bound $\widetilde{c_{N-1}} (\epsilon)$ is uniform with respect to ${\bf q} \in \overline{\mathcal{P}_N (\epsilon/2)}$.  
Since $\beta < +\infty$, 
it holds that $\displaystyle \inf_{y \in Y} G_{N-1} (y) \ge c$ for some $c > 0$. 
The rest of the argument is the same as in the above case.

{\it Case 2.3.} We consider the case where $a_0 = 1$ and $b_{N-1} + c_{N-1} = 0$. 

Since $\textup{Fix}(H_0) = \{+\infty\}$, $\textup{Fix}(H_{N-1}) = \{-1\}$, $q_0 \le 1-\epsilon/2$ and $q_{N-1} \le 1-\epsilon/2$, 
it holds that 
$$\inf_{y \in Y} W_0 ({\bf q}, y) > \widetilde{c_0} (\epsilon/2) > 0$$ and 
$$\inf_{y \in Y} W_{N-1} ({\bf q}, y) > \widetilde{c_{N-1}} (\epsilon/2) > 0.$$ 
Since 
\[ G_0 (y) + G_{N-1}(y) = \frac{b_1 b_{N-1} y^2 + 2b_1 y  + 1+ b_1 - b_{N-1}}{(b_1 y + 1)(b_{N-1}y+1)}, \] 
it holds that 
$$\inf_{y \in Y} G_0 (y) + G_{N-1}(y) > 0.$$
By using Lemma \ref{lem:classify-multiple} and  Lemma \ref{lem:azuma-apply} for $I = \{0,N-1\}$,   
we have the assertion. 
\end{proof}

Now the proof of Theorem \ref{thm:main-singular-1} is completed.

\begin{Rem}
(i) In the proof of  \cite[Theorem 43]{Okamura2020}, the details of Case 2 were not given. \\
(ii) Assume that $\{g_i\}_{i \in \mathcal{I}_N}$ are affine maps such that  $g_i (x) = b_i x + (b_0 + \cdots + b_{i-1})$. 
Since $Y = \{0\}$, there is no need to introduce $H_i$, and $W_i ({\bf q}, y) = \max_k |G_k (y) - q_k| = \max_k |b_k - q_k|$.
However, it holds that $\mathfrak{p}_0 = \cdots = \mathfrak{p}_{N-1} = (b_i)_i$, and hence this is not in the scope of Theorem \ref{thm:main-singular-1}. 
This case is considered in Theorem \ref{thm:main-singular-2}.  
\end{Rem}

We now proceed to the proof of Theorem \ref{thm:main-singular-2}. 

By Lemma \ref{lem:delta-specify}, 
if we take $\delta > 0$ such that 
\begin{equation}\label{eq:delta-2} 
\frac{\log N}{\log (N /(1+\delta))}\le 1 + \frac{\log\left(1 + \epsilon/2\right)}{\log (1/\epsilon)}, 
\end{equation}   
then $\displaystyle s \le 1 + \dfrac{\log\left(1 + \epsilon/2\right)}{\log (1/\epsilon)}$. 

Assume that $\displaystyle s \le 1 + \dfrac{\log\left(1 + \epsilon/2\right)}{\log (1/\epsilon)}$. 
Since $\displaystyle \min_{i \in \mathcal{I}_N} r_i \ge \epsilon$, 
$\displaystyle \sum_{i \in \mathcal{I}_N} |r_i - r_i^s| \le \epsilon^{1-s} - 1 \le \epsilon/2$. 
Hence, $\left\|{\bf \widetilde{q}} - \mathfrak{p}_0 \right\|_1 \ge \epsilon/2$, 
where we let $\widetilde{q}_i \coloneqq r_i^s$ for each $i \in \mathcal{I}_N$ and ${\bf \widetilde{q}} \coloneqq \left(\widetilde{q}_i\right)_{i \in \mathcal{I}_N}$. 
Since $\epsilon \in (0,1)$, $\widetilde{q}_i \ge \epsilon^s \ge \dfrac{\epsilon}{1+\epsilon/2} \ge \epsilon/2$ for each $i \in \mathcal{I}_N$.  
Hence, ${\bf \widetilde{q}} \in \overline{\mathcal{P}_N (\epsilon/2)}$. 

By Lemma \ref{lem:basic-W} (i) and (ii), 
it holds that 
\[ \epsilon_3 \coloneqq \inf \left\{ W_i ({\bf q}, y) \middle| i \in \mathcal{I}_N, y \in Y, {\bf q} \in \overline{\mathcal{P}_N (\epsilon/2)}, \left\|{\bf q} - \mathfrak{p}_0 \right\|_1 \ge \epsilon/2 \right\} > 0. \]

By Lemma \ref{lem:classify-multiple}, 
we see that for every $j \ge 0$, every $z \in (\mathcal{I}_N)^{\mathbb{N}}$, and every ${\bf q} \in \overline{\mathcal{P}_N (\epsilon/2)}$ with $\left\|{\bf q} - \mathfrak{p}_0 \right\|_1 \ge \epsilon/2$, 
\[ s_N \left(p_{j}(0;z) | {\bf q} \right) +  s_N \left(p_{j+1}(0;z) | {\bf q} \right) \le V\left({\bf q}; \epsilon_3\right). \] 

By Lemma \ref{lem:V}, we see that 
\[ V_{\epsilon,2}(\epsilon_3) \coloneqq \max\left\{ V\left({\bf q}; \epsilon_3\right) \middle| {\bf q} \in \overline{\mathcal{P}_N (\epsilon/2)}, \left\|{\bf q} - \mathfrak{p}_0 \right\|_1 \ge \epsilon/2 \right\} < 0. \]

Then, 
\[ \limsup_{n \to \infty} \frac{1}{n} \sum_{j=0}^{n-1} s_N \left(p_j (0;z) | {\bf q} \right) \le \frac{ V_{\epsilon,2}(\epsilon_3)}{2}, \ \textup{ $\nu_0$-a.s.  $z  \in (\mathcal{I}_N)^{\mathbb{N}}$.}  \]

Assume that $\delta > 0$ satisfies not only \eqref{eq:delta-2} but also  \eqref{eq:final-delta} for $\epsilon_2 \coloneqq V_{\epsilon}(\epsilon_3)/2$. 
Then, by Propositions \ref{prop:uniform-delta} and \ref{prop:reduction-rel-ent}, $\dim_H \mu_{\varphi} < 1$.

\begin{Rem}
(i) If $\mathfrak{p}_{0} = \cdots = \mathfrak{p}_{N-1}$, then $\mathfrak{p}_{0} \in \mathcal{P}_N$.  
We show this. 
Since $G_0 (y) = \dfrac{b_1 (y+1)}{b_1 y + 1}$ and $0 < b_1 < 1$, it is injective on $Y$. 
Hence, $\textup{Fix}(H_0)  = \cdots = \textup{Fix}(H_{N-1})$. 
Hence, $\displaystyle \frac{c_0}{1-a_0} = \frac{a_1 - 1 + \sqrt{(1-a_1)^2 + 4b_1 c_1}}{2 b_1} < \infty$. 
By Definition \ref{def:lf-family} and the assumption that $d_0 = 1$, 
we see that 
$a_0 \in (0,1)$ and $\dfrac{a_0}{c_0 + 1} = b_1 \in (0,1)$. 
Hence, $\displaystyle \frac{c_0}{1-a_0} > -1$. 
Recall that $\displaystyle G_i (y) = \frac{(b_{i+1} - b_{i})(y+1)}{(b_i y + 1)(b_{i+1} y + 1)}, i \in \mathcal{I}_N$. 
Since $0 = b_0 < b_1 < \cdots < b_{N-1} < b_N = 1$, $G_i (y) > 0$ for every $y \in (-1,+\infty)$ and every $i \in \mathcal{I}_N$. 
Hence, $\displaystyle \mathfrak{p}_{0} = \left(G_i \left( \frac{c_0}{1-a_0} \right) \right)_{i \in \mathcal{I}_N} \in \mathcal{P}_N$.\\ 
(ii) In the proof of Theorem \ref{thm:main-singular-2}, 
we do not need to use Lemma \ref{lem:azuma-apply}, since we have the estimate that $\epsilon_3 > 0$.  
We remark that for every small $\epsilon > 0$ and every $i \in \mathcal{I}_N$, 
\[ \min \left\{ W_i ({\bf q}, y) \middle| y \in Y, {\bf q} \in \overline{\mathcal{P}_N (\epsilon/2)} \right\}  = 0,  \]

because, by (i) above, we can let ${\bf q} = {\bf G}\left( \textup{Fix}(H_i) \right) = \mathfrak{p}_i = \mathfrak{p}_{j}, j \in \mathcal{I}_N$ and $y = \textup{Fix}(H_i)$. \\ 
(iii) The case where both $\{f_i\}_{i \in \mathcal{I}_N}$ and $\{g_i\}_{i \in \mathcal{I}_N}$ are affine maps is easily handled by the dimension formula. 
However, if some of the functions $\{f_i, g_i\}_i$ are non-affine maps, 
then it is difficult to judge the singularity by the generalized dimension formula in \cite[Corollary 3.5]{Fan1999}. 
\end{Rem}

\section{Examples}\label{sec:ex} 

We give three examples assuming that $\{g_i\}_{i \in \mathcal{I}_N}$ is an LF system and $\{f_i\}_{i \in \mathcal{I}_N}$ are non-affine maps and the solution $\varphi$ is singular.   
The result depends on the quantitative estimates in Section \ref{sec:singu-non-linear}. 
We assume that $d_i = 1$ for every $i \in \mathcal{I}_N$.

We first deal with the case where $N=2$. 
Then, 
\[ G_0 (y) = \frac{b_1 (y+1)}{b_1 y + 1}, \ H_0 (y) = a_0 y + c_0, \ H_1 (y) = \frac{(1-b_1+c_1) y + c_1}{b_1 y + 1}. \]

\begin{Exa}
Let 
$$g_0 (x) \coloneqq \frac{x}{-x+6} = \frac{\frac{x}{6}}{-\frac{x}{6} + 1}, \textup{ and }  g_1 (x) \coloneqq \frac{3x+1}{5-x} = \frac{\frac{3}{5} x + \frac{1}{5}}{-\frac{x}{5} + 1}.$$ 
Then, $g_0$ is contractive, and $g_1$ is weakly contractive but not contractive.   
We see that 
\[ \alpha = \min\left\{0, -\frac{1}{5}, -1\right\} = -1, \ \beta = \left\{0, -\frac{1}{5}, -1\right\} = 0. \]
We see that 
\[ G_0 (y) = \frac{y+1}{y+5}, \textup{ and } G_1 (y) = 1 - G_0 (y) = \frac{4}{y+5}. \]
Hence, 
\[ \sup_{y \in [-1,0]} G_0 (y) = 1 - \inf_{y \in [-1,0]} G_1 (y) =  \frac{1}{5}. \] 
It also holds that 
\[ H_0 (y) = \frac{y-1}{6} \textup{ and } H_1 (y) = \frac{3y-1}{y+5}.\]
Hence, $\textup{Fix}(H_0) = \{-1/5\}$ and $\textup{Fix}(H_1) = \{-1\}$. 
Hence, 
\[ {\bf G}\left(\textup{Fix}(H_0)\right) = \left(\frac{1}{6}, \frac{5}{6}\right) \textup{ and } {\bf G}(\textup{Fix}(H_1)) = (0,1). \]

Let $\widetilde{\mathcal{P}_2} \coloneqq \left\{(p_0, p_1) \in \mathcal{P}_2  \colon  p_0 \ge 4/5 \right\}$. 
Then, 
\[ \left|G_0 (y) - q_0 \right| = \left|G_1 (y) - q_1 \right| \ge \frac{3}{5}, \ y \in [-1,0], \ {\bf q} = (q_0, q_1) \in \widetilde{\mathcal{P}_2}.   \]

Since $s_2 \left({\bf p} | {\bf q} \right) = s_2 \left((p_0, 1-p_0)|(q_0, 1-q_0) \right)$ is increasing with respect to $p_0$ on $[0, q_0]$, 
\begin{align}\label{eq:direct-upper-bound-ent}  
s_2 \left({\bf G}(y) | {\bf q}\right) \le s_2 \left(\left(\frac{1}{5}, \frac{4}{5} \right)\middle| (q_0, 1-q_0) \right) &\le  s_2 \left(\left(\frac{1}{5}, \frac{4}{5}\right) \middle| \left(\frac{4}{5}, \frac{1}{5}\right) \right) \notag\\
&\le -\frac{6}{5} \log 2 \le -0.83.   
\end{align} 

By Proposition \ref{prop:reduction-rel-ent}, 
it holds that  
\begin{equation}\label{eq:upper-ex-numerical}
\limsup_{n \to \infty} \frac{-\log R_{n}({\bf q};z)}{n} = \limsup_{n \to \infty} \frac{1}{n} \sum_{j=0}^{n-1} s_N \left(p_j (0;z) | {\bf q} \right) \le -0.83, \ \textup{  $\nu_0$-a.s. $z$, \ ${\bf q}  \in \widetilde{\mathcal{P}_2}$.} 
\end{equation}

Let $\epsilon \in (0,1/4)$ and let $\{f_0, f_1\}$ be a contractive system such that $\epsilon \le r_i \le 1-\epsilon$, where $r_i \coloneqq \|f_i\|_{\textup{Lip}}$, $i=0,1$.   
Let $s \ge 1$ be the constant such that $r_0^s + r_1^s = 1$. 
Assume that $(r_0^s, r_1^s) \in \widetilde{\mathcal{P}_2}$. 
Then, by \eqref{eq:upper-ex-numerical} and Lemma \ref{lem:dimH}, 
\[ \dim_{H} \mu_{\varphi} \le s - \frac{0.83}{\log (1/\epsilon)}. \]

By Lemma \ref{lem:delta-specify}, if $\{f_0, f_1\}$ is a contractive system such that $0.1 \le r_i \le 0.9$, $i=0,1$, and $r_0 + r_1 \le 1.2$, 
then $s \le \dfrac{\log 2}{\log 2 - \log 1.2}$. 

Since we see that 
$\dfrac{0.83}{\log 10} \ge 0.36$ and $\dfrac{\log 2}{\log 2 - \log 1.2} \le 1.357$, 
equation \eqref{eq:final-delta} holds for $N=2$, $\epsilon = 0.1$, $\epsilon_2 = 0.83$ and $\delta = 0.2$. 

Therefore, if $(r_0^s, r_1^s) \in \widetilde{\mathcal{P}_2}$, then 
\[ \dim_{H} \mu_{\varphi} \le s - \frac{0.83}{\log (1/\epsilon)} \le \dfrac{\log 2}{\log 2 - \log 1.2} - \frac{0.83}{\log 10}  < 1. \]

Now we can let $\{f_0, f_1\}$ be a compatible system such that $0.85 \le r_0 \le 0.9$, $0.1 \le r_1 \le 0.3$ and $r_0 + r_1 \ge 1$. 
Then, $r_0 + r_1 \le 1.2$.   
The maps $f_0$ and $f_1$ can be non-affine maps. 
Since $s \le \dfrac{\log 2}{\log 2 - \log 1.2} \le 1.357$, we have $r_0^s \ge (0.85)^{1.357} \ge 0.8$. 
Hence, $(r_0^s, r_1^s) \in \widetilde{\mathcal{P}_2}$ and we see that $ \dim_{H} \mu_{\varphi} < 1$.

In this case, we see that $a_0 < 1$ and $b_{1} + c_{1} = 0$. 
Hence, this case corresponds to Case 2.2. 
However, we do not need to apply Lemma \ref{lem:classify-multiple} 
since we have the direct upper bound for the relative entropy in \eqref{eq:direct-upper-bound-ent}. 
\end{Exa}
 
\begin{Exa}
Let $N=3$. 
Let 
\[ g_0 (x) \coloneqq \frac{x}{8-x}, \ g_1 (x) \coloneqq \frac{5x+1}{7}, \textup{ and } g_2 (x) \coloneqq \frac{6}{7-x}.  \]
Then, 
\[ \alpha = \min\left\{0,-\frac{1}{7}, 0, -\frac{1}{6}\right\} = -\frac{1}{6} \textup{ and } \beta =  \max\left\{0,-\frac{1}{7}, 0, -\frac{1}{6}\right\} = 0. \]
Hence, $Y = [-1/6, 0]$. 
We see that 
\[ G_0 (y) = \frac{y+1}{y+7}, G_1 (y) = \frac{35(y+1)}{(6y+7)(y+7)}, \textup{ and } G_2 (y) = \frac{1}{6y+7}. \]
Hence, 
\[ \sup_{y \in [-1/6,0]} G_0 (y) = \frac{1}{7},  \textup{ and } \sup_{y \in [-1/6,0]} G_2 (y) = \frac{1}{6}. \]
We also see that 
\[ H_0 (y) = \frac{y-1}{8}, H_1 (y) = \frac{5y}{y+7}, \textup{ and } H_2 (y) = -\frac{1}{6y+7}. \]
Hence, $\textup{Fix}(H_0) = \{-1/7\}$, $\textup{Fix}(H_1) = \{0\}$ and $\textup{Fix}(H_2) = \{-1/6\}$. 
Hence, 
\[ {\bf G} (\textup{Fix}(H_0)) = \left(\frac{1}{8}, \frac{245}{344}, \frac{7}{43}\right),  {\bf G} (\textup{Fix}(H_1)) = \left(\frac{1}{7}, \frac{5}{7}, \frac{1}{7}\right), \textup{ and }  \]
\[ {\bf G} (\textup{Fix}(H_2)) = \left(\frac{5}{41}, \frac{175}{246}, \frac{1}{6}\right). \]
Let 
\[ \widetilde{\mathcal{P}_3} \coloneqq \left\{(p_0,p_1,p_2) \in \overline{\mathcal{P}_3} \ \middle| \ p_0, p_2 \in \left[\frac{7}{16}, \frac{1}{2}\right] \right\}. \]
We now show that   
\begin{equation}\label{eq:ex-3-rel-ent}
s_3 \left({\bf G}(y) | {\bf q}\right) \le -0.74, \ y \in [-1/6,0], \  {\bf q} \in \widetilde{\mathcal{P}_3}. 
\end{equation}

For $q > 0$, let $f_q (x) \coloneqq x \log (q/x)$ for $x \in [0,1]$. 
Then, $f_q$ is increasing on $[0, q/e]$ and decreasing on $[q/e,1]$. 
Furthermore, for each $x \ge 0$, $f_q (x)$ is increasing with respect to $q$. 
Since ${\bf q} = (q_0,q_1,q_2) \in  \widetilde{\mathcal{P}_3}$, it holds that $q_0, q_2 \le 1/2$ and $q_1 \le 1/8$. 
We also see that $\max\{G_0 (y), G_2 (y)\} \le \dfrac{1}{6} < \dfrac{1}{2e}$ and $\dfrac{1}{8e} < \dfrac{2}{3} \le G_1 (y)$.

Therefore, for $y \in [-1/6,0]$, 
\begin{align*} 
s_3 \left({\bf G}(y) | {\bf q}\right) &= f_{q_0}(G_0 (y)) + f_{q_1}(G_1 (y)) + f_{q_2}(G_2 (y)) \\
&\le f_{1/2}(G_0 (y)) + f_{1/8}(G_1 (y)) + f_{1/2}(G_2 (y)) \\
&\le 2f_{1/2}(1/6) + f_{1/8}(2/3) = \log 3 - \frac{8}{3} \log 2 \le -0.74. 
\end{align*} 
Thus we have \eqref{eq:ex-3-rel-ent}. 

By Proposition \ref{prop:reduction-rel-ent}, 

\begin{equation}\label{eq:upper-ex-numerical-2}
\limsup_{n \to \infty} \frac{-\log R_{n}({\bf q};z)}{n}  \le -0.74, \ \textup{  $\nu_0$-a.s. $z$, \ ${\bf q}  \in \widetilde{\mathcal{P}_3}$.} 
\end{equation}

Let $\epsilon \in (0,1/4)$ and let$\{f_0, f_1,f_2\}$ be a contractive system such that $\epsilon \le r_i \le 1-\epsilon$, where $r_i \coloneqq \|f_i\|_{\textup{Lip}}$, $i=0,1,2$.   
Let $s \ge 1$ be the constant such that $r_0^s + r_1^s + r_2^s = 1$. 
Assume that $(r_0^s, r_1^s, r_2^s) \in \widetilde{\mathcal{P}_3}$. 
Then, by \eqref{eq:upper-ex-numerical-2} and Lemma \ref{lem:dimH}, 
\[ \dim_{H} \mu_{\varphi} \le s - \frac{0.74}{\log (1/\epsilon)}. \]

By Lemma \ref{lem:delta-specify}, 
if $\{f_0, f_1, f_2\}$ is a contractive system such that $0.05 \le r_i \le 0.95$, $i=0,1,2$, and $r_0 + r_1 + r_2 \le 1.1$, 
then 
$s \le \dfrac{\log 3}{\log 3 - \log 1.1}$. 

Since we see that 
$\dfrac{0.74}{\log 20} > 0.24$ and $\dfrac{\log 3}{\log 3 - \log 1.1} < 1.1$, 
equation \eqref{eq:final-delta} holds for $N=3$, $\epsilon = 0.05$, $\epsilon_2 = 0.74$ and $\delta = 0.1$. 

Therefore, if $(r_0^s, r_1^s, r_2^s) \in \widetilde{\mathcal{P}_3}$, 
then 
\[ \dim_{H} \mu_{\varphi} \le s - \frac{0.74}{\log (1/\epsilon)} \le \dfrac{\log 3}{\log 3 - \log 1.1} - \frac{0.74}{\log 20}  < 1. \]

Now we can let $\{f_0, f_1,f_2\}$ be a compatible system such that $0.475 \le r_0 \le 0.5$, $0.05 \le r_1 \le 0.1$, and $0.475 \le  r_2 \le 0.5$. 
Then, $1 \le r_0 + r_1 + r_2 \le 1.1$.   
The maps $f_0$, $f_1$ and $f_2$ can be non-affine maps. 
Since $\displaystyle s \le \frac{\log 3}{\log 3 - \log 1.1} \le 1.1$, 
we have $\min\{r_0^s, r_2^s\} \ge (0.475)^{1.1} \ge \frac{7}{16}$. 
Hence, $(r_0^s, r_1^s, r_2^s) \in \widetilde{\mathcal{P}_3}$ and $ \dim_{H} \mu_{\varphi} < 1$.  
\end{Exa}

\begin{Exa}
Let $N = 4$. 
Let 
\[ g_0 (x) = \frac{x}{3}, \ g_1 (x) = \frac{x+2}{6}, \ g_2 (x) = \frac{x+3}{6}, \textup{ and } g_3 (x) = \frac{x+2}{3}.   \]
Then, $\alpha = \beta = 0$, and hence, $Y = \{0\}$. 
We see that 
\[ G_0 (0) = \frac{1}{3}, \ G_1 (0) = \frac{1}{6}, \ G_2 (0) = \frac{1}{6},  \textup{ and }  G_3 (0) = \frac{1}{3}.\]  
Let 
\[ \widetilde{\mathcal{P}_4} \coloneqq \left\{(p_0,p_1,p_2,p_3) \in \overline{\mathcal{P}_4} \ \middle| \ p_0, p_3 \in \left[0, \frac{1}{8}\right] \right\}. \]
Then, for ${\bf q} = (q_0, q_1, q_2, q_3) \in \widetilde{\mathcal{P}_4}$, 
\[ s_4 \left({\bf G}(0) | {\bf q} \right) =  \log 3 + \frac{1}{3} \log 2 + \frac{1}{6} (2\log q_0 + \log q_1 + \log q_2 + 2\log q_3). \]
Since $\displaystyle \sum_{i=0}^{3} q_i = 1$, 
$q_1 + q_2 = 1 - q_0 - q_3$, and hence, 
$\log q_1 + \log q_2 \le 2 \log ((1-q_0-q_3)/2)$. 
Therefore, 
\[ s_4 \left({\bf G}(0) | {\bf q} \right) \le  \log 3 +  \frac{1}{3} \left(\log q_0 + \log q_3 + \log(1-q_0-q_3)\right). \]
Let $f(x,y) \coloneqq \log x + \log y + \log (1-x-y)$ for $x, y \in [0,1/8]$. 
Then, 
$$\max_{x, y \in [0,1/8]} f(x,y) = f\left(\frac{1}{8}, \frac{1}{8}\right) = \log 3 - 8\log 2. $$
Therefore, 
\[ s_4 \left({\bf G}(0) | {\bf q} \right) \le \frac{4}{3} \log \frac{3}{4} \le -0.38, \ {\bf q} \in \widetilde{\mathcal{P}_4}.  \]

By Proposition \ref{prop:reduction-rel-ent}, 

\begin{equation}\label{eq:upper-ex-numerical-3}
\limsup_{n \to \infty} \frac{-\log R_{n}({\bf q};z)}{n}  \le -0.38, \ \textup{  $\nu_0$-a.s.$z$, \ ${\bf q}  \in \widetilde{\mathcal{P}_4}$.} 
\end{equation}

Let $\epsilon \in (0,1/4)$ and let $\{f_0, f_1,f_2,f_3\}$ be a contractive system such that $\epsilon \le r_i \le 1-\epsilon$, where $r_i \coloneqq \|f_i\|_{\textup{Lip}}$, $i=0,1,2,3$.   
Let $s \ge 1$ be the constant such that $r_0^s + r_1^s + r_2^s + r_3^s= 1$. 
Assume that $(r_0^s, r_1^s, r_2^s, r_3^s) \in \widetilde{\mathcal{P}_4}$. 
Then, by \eqref{eq:upper-ex-numerical-3} and Lemma \ref{lem:dimH}, 
\[ \dim_{H} \mu_{\varphi} \le s - \frac{0.38}{\log (1/\epsilon)}. \]

By Lemma \ref{lem:delta-specify}, 
if $\{f_0, f_1, f_2, f_3\}$ is a contractive system such that $0.1 \le r_i \le 0.9$, $i=0,1,2,3$, and $r_0 + r_1 + r_2 + r_3 \le 1.2$, 
then 
$s \le \dfrac{\log 4}{\log 4 - \log 1.2}$. 

Since we see that 
$\dfrac{0.38}{\log 10} > 0.165$ and $1.15 < \dfrac{\log 4}{\log 4 - \log 1.2} < 1.152$, 
equation \eqref{eq:final-delta} holds for $N=4$, $\epsilon = 0.1$, $\epsilon_2 = 0.38$ and $\delta = 0.2$. 

Therefore, if $(r_0^s, r_1^s, r_2^s, r_3^s) \in \widetilde{\mathcal{P}_4}$, 
then 
\[ \dim_{H} \mu_{\varphi} \le s - \frac{0.38}{\log (1/\epsilon)} \le \dfrac{\log 4}{\log 4 - \log 1.2} - \frac{0.38}{\log 10}  < 1. \]

Now we can let $\{f_0, f_1,f_2,f_3\}$ be a compatible system such that $0.1 \le r_0 \le 0.125$, $0.4 \le r_1 \le 0.475$, $0.4 \le r_2 \le 0.475$, $0.1 \le r_3 \le 0.125$. 
Then, $1 \le r_0 + r_1 + r_2 + r_3 \le 1.2$.   
The maps $f_0$, $f_1$, $f_2$ and $f_3$ can be non-affine maps. 
Since $s \ge 1$, we have $\max\{r_0^s, r_3^s\} \le 1/8$. 
Hence, $(r_0^s, r_1^s, r_2^s, r_3^s) \in \widetilde{\mathcal{P}_4}$, and $\dim_H \mu_{\varphi} < 1$. \\
\end{Exa}

{\it Acknowledgments} \  
The author would like to express his gratitude to the reviewer for his or her very careful reading of the manuscript and for providing many valuable comments. 
Example \ref{exa:fixed-point-not-D-system} is suggested by the reviewer.\\ 

\bibliographystyle{plain}
\bibliography{qs-conjugate-dR-revision}

\end{document}